\documentclass[a4paper,12pt]{amsart}
\usepackage{fullpage}
\usepackage{amssymb}
\usepackage{amsthm}
\usepackage{mathtools}
\usepackage{graphicx}
\usepackage{tikz}
\usepackage{tikz-cd}

\usepackage[titletoc]{appendix}

\newcounter{counter2}
\numberwithin{equation}{section}
\numberwithin{counter2}{section}

\newtheorem{thm}[counter2]{Theorem}
\newtheorem{prop}[counter2]{Proposition}
\newtheorem{lemma}[counter2]{Lemma}
\newtheorem{cor}[counter2]{Corollary}
\newtheorem{defn}[counter2]{Definition}
\newtheorem{rem}[counter2]{Remark}
\newtheorem{exmp}[counter2]{Example}

\newcommand{\im}{\operatorname{im}}

\newcommand{\End}{\operatorname{End}}

\DeclareSymbolFont{script}{U}{eus}{m}{n}
\DeclareMathSymbol{\Wedge}{0}{script}{"5E}

\newcounter{mnotecount}[section]

\renewcommand{\themnotecount}{\thesection.\arabic{mnotecount}}

\newcommand{\mnote}[1]%{}%
{\protect{\stepcounter{mnotecount}}$^{\mbox{\footnotesize
$%\!\!\!\!\!\!\,
\bullet$\themnotecount}}$ \marginpar{%\color{red}%
\raggedright\tiny\em
$\!\!\!\!\!\!\,\bullet$\themnotecount: #1} }

\newcommand{\X}{{X}}

\def\be{\begin{equation}}

\def\dim{\mbox{dim}}

\def\dim{\mbox{dim}}

\def\ee{\end{equation}}

\def\bea{\begin{eqnarray}}
\def\eea{\end{eqnarray}}

\newcommand{\nocontentsline}[3]{}
\newcommand{\tocless}[2]{\bgroup\let\addcontentsline=\nocontentsline#1{#2}\egroup}

\title{Conformally Einstein anti-self-dual spaces \\ and their generalisations}
\author{Timothy Moy$^{1,2}$}
\thanks{${}^{1}$Department of Applied Mathematics and Theoretical Physics,  
University of Cambridge}
\thanks{${}^2$London Mathematical Society Early Career Research Fellow,  School of Mathematical and Physical Sciences,  University of Sheffield}

\begin{document}

\begin{abstract} 
We study the question of existence of Einstein scales and compatible metrics in Grassmannian geometry.  As a special case,  this includes the conformal-to-Einstein problem for anti-self-dual (ASD) conformal structures.  \par 
In the ASD setting,  given a genericity condition algebraic in the Weyl curvature,  we obtain necessary and sufficient conditions for the existence of a local Einstein scale that refine those previously appearing in the literature.  We also prove that in Riemannian signature,  a non-Ricci-flat ASD-K\"ahler metric is locally conformally Einstein if and only if it has a holomorphic Killing vector field with anti-self-dual derivative proportional to the Ricci spinor.  \par  Next,  we consider some generalisations of the ASD conformal-to-Einstein problem for $(2n,2)$-Grassmannian structures.  When $n > 1$,  given an analogous genericity condition,  we obtain necessary and sufficient conditions for the existence of a local Einstein scale.  Using tractor calculus,  we obtain algebraic obstructions to the existence of compatible metrics and show that the unique submaximally symmetric model of the geometry has the submaximal number of linearly independent compatible metrics.

%
%Given an anti-self-dual (ASD) conformal structure $(X,[g])$ on a four-manifold $X$,  we review the conformal-to-Einstein problem of determining when the conformal class (locally) admits an Einstein scale.  We observe that known arguments in the non-self-dual,  algebraically generic case may be readily adapted and we write down necessary and sufficient conditions in terms of spinor invariants.  These refine conditions previously appearing in the literature.  Next,  we consider the problem for a number of examples and special classes of four-dimensional ASD metrics.  Included is a proof that in Riemannian signature,  an ASD-K\"ahler metric is conformally Einstein if and only if it has a holomorphic Killing vector with self-dual derivative proportional to the Ricci spinor.  We then study a generalisation of the ASD conformal-to-Einstein problem.  Rather than consider conformal structures in higher dimensions,  we extend our analysis to the setting of $(2n,2)$-almost-Grassmannian structures,  $n > 1$,  and the problem of finding metrics (all of which are Einstein) that are compatible in the appropriate sense.  
\end{abstract}

\maketitle
 \setcounter{tocdepth}{1}
\tableofcontents
\section{Introduction} 
In this article we seek conditions for the existence of Einstein metrics compatible with a torsion-free Grassmannian structure.  In four dimensions,  this is simply the conformal-to-Einstein problem for anti-self-dual (ASD) metrics.  Since this special case is of interest in general relativity and mathematical physics,  we study it in detail,  and the material up to and including \S \ref{ASDK} can be read entirely with a view to the four-dimensional case.  
\subsection*{Einstein scales in anti-self-dual conformal geometry}
The problem of determining when a conformal class of metrics contains an Einstein representative,  the \textit{conformal-to-Einstein} problem,  was considered in the early years of general relativity \cite{Br24}.   In the four-dimensional Lorentzian setting,  sets of tensor invariants of which the vanishing is necessary and sufficient for the existence of a locally compatible Einstein metric were obtained in \cite{S63} and \cite{KNT85},  given an algebraic non-degeneracy assumption on the Weyl curvature.  There is now a large corpus of literature on the problem both in four and higher dimensions \cite{GN06,KNN03,L01,L06} and related global problems \cite{D83,CLW08},  to list just some examples.   \par 
Staying in four dimensions,  in the Riemannian,  split-signature,  and holomorphic settings,  one may consider conformal structures for which the Weyl tensor (thought of as an endomorphism on two-forms) is anti-self-dual.  This class of metrics has been historically important,  as it is the setting of curved twistor theory \cite{P76}.  Since the endomorphism defined by the Weyl tensor necessarily has a kernel,  these metrics fall outside the classes considered in most treatments of the conformal-to-Einstein problem.  The problem has been considered in the ASD setting in \cite{BE90} and (for the case of Ricci-flat scales) in \cite{DT14}.  The treatment in \cite{GN06} for ``weakly generic" pseudo-Riemannian metrics in arbitrary dimension specialises to give necessary and sufficient conditions for a generic ASD metric to be conformally Einstein.  There,  necessary and sufficient conditions are stated as the vanishing of a section of a bundle of rank fifteen.    \par 
In \S \ref{KNT_repeat},  we apply similar methods to \cite{S63,  KNT85} to study Einstein scales for ASD metrics.  We use them to refine some of the results in \cite{DT14,  GN06} in the generic setting.  In particular,  using a result of \cite{Calderbank},  we can show that the necessary condition found in \cite{BE90} is in fact sufficient.  After picking a basis it consists of just four linearly independent equations which are third (differential) order in the metric components.  \par
In \S \ref{ASDK},  we consider the problem for the subclass of ASD-K\"ahler (scalar-flat K\"ahler) metrics.  We prove that in Riemannian signature,  a non-Ricci-flat ASD-K\"ahler metric is conformally Einstein if and only if it has a holomorphic Killing vector with anti-self-dual derivative proportional to the Ricci spinor.  This completes the local characterisation of four-dimensional Riemannian K\"ahler metrics that are conformally Einstein,  as the non-ASD case is treated in \cite{D83}.  \par We compute,  using the Maple \texttt{DifferentialGeometry} package,  some interesting examples: ASD Pleba\'nski-Demia\'nski metrics \cite{PD76,  A23} and an example arising from three-dimensional Einstein-Weyl geometry on the space Nil.   
\subsection*{Einstein scales and compatible metrics in Grassmannian geometry} 
Central to twistor theory is the interpretation of a conformal structure $(X,[g])$ in four dimensions as a ``curved analogue" of the Grassmannian $F_2(\mathbb{C}^4)$ of two-planes,  and also the fact that a version of the \textit{twistor correspondence},  underlain by the double fibration 
\begin{equation}
\begin{tikzcd}
                  & {F_{1,2}(\mathbb{C}^4)} \arrow[rd] \arrow[ld] &                   \\
\mathbb{CP}^3 &                                               & F_2(\mathbb{C}^4)
\end{tikzcd}
\end{equation}
persists,  subject to an integrability condition equivalent to the anti-self-duality of the Weyl curvature.   
It is arguably more natural,  from the point of view of twistor theory,  to generalise four-dimensional conformal geometry not by considering higher-dimensional conformal structures,  but by considering spaces modelled on $F_2(\mathbb{C}^{2n+2})$ in the sense of \textit{parabolic geometry} \cite{CS09}.  \par 
From this perspective,  the structure one should consider on a $4n$-dimensional complex manifold $X$ is a local factorisation of the holomorphic tangent bundle
\begin{align}
TX \cong {S} \otimes {S}' 
\end{align}
where $S$ and $S'$ are holomorphic vector bundles of rank $2n$ and rank $2$ respectively.  As we will recall,  the invariant curvature of such a structure behaves similarly to the invariant curvature of a four-dimensional conformal structure: Firstly,  it can be decomposed into two irreducible components.  Secondly,  the vanishing of these ``right and left" components can be interpreted as the integrability conditions for the existence of maximal families of submanifolds with tangent spaces generalising the notion of self-dual and anti-self-dual planes,  respectively.  \par
The geometric structures we will study have been called \textit{Segr\'e},  \textit{generalised conformal} \cite{G87, G88},  \textit{paraconformal} \cite{BE91},  \textit{$(2n,2)$-almost-Grassmannian}\index{almost-Grassmannian} \cite{CS09,HSSS12},  or \textit{Grassmannian spinor} \cite{M97}.  There is also the related notion of \textit{pluricomplex} structures \cite{BS13},  which arise as particular examples of such structures equipped with an anti-holomorphic involution realising the underlying manifold as the analytic continuation of a real-analytic manifold.  \par 
%The structures we will consider in this article always satisfy the integrability condition generalising anti-self-duality (the ``right-flat" condition).  
We call them almost-Grassmannian,  leaving the type--$(2n,2)$ implicit,  and later specialise to the torsion-free case,  and use the acronym TFG for torsion-free Grassmannian.   \par  
In \S \ref{metrisability} we consider an overdetermined system of invariantly defined partial differential equations (PDE) on an almost-Grassmannian structure.  In the torsion-free case,  the non-degenerate solutions of this system correspond to metrics compatible with the Grassmannian geometry in the sense of \cite{BE91}.  When $n > 1$,  these metrics are all Einstein and are holomorphic analogues of quaternion-K\"ahler metrics.   By prolongation of the system one may show that solutions correspond to the parallel sections of the Cartan connection $D$ on a \textit{tractor bundle}.  This allows for an analysis of obstructions to the existence of solutions,  and we construct some (\S \ref{tractor_obstructions}).   In \S \ref{submax} we consider the submaximally symmetric Grassmannian structure obtained by Kruglikov and The \cite{KT}.  We show that this example has the submaximally allowed number of linearly independent compatible metrics.  
\par Finally,  in \S \ref{AGEinstein} we consider the easier problem of finding compatible connections for the geometry which satisfy an analogue of the Einstein condition.  In fact,  as is noted in \cite{BE91},  such an \textit{Einstein scale} generically arises from a compatible metric.  On a torsion-free Grassmannian structure,  there is an analogous identity to the vanishing of the Bach tensor in ASD geometry.  This identity allows us to adapt the spinorial arguments of \cite{KNT85} to the Grassmannian setting,  and the necessary and sufficient conditions for the existence of an Einstein scale (given an algebraic genericity condition) resemble those in four dimensions. 
Before all this,  in \S \ref{spinors_and_spacetime},  we review $(2n,2)$-Grassmannian geometry.  All formulae are valid for anti-self-dual conformal structures.  \par 
\subsection*{Acknowledgements}
This article began as a chapter of the author's PhD thesis \cite{Moy26} and therefore owes a great deal to the guidance and supervision of Maciej Dunajski. 
The author would like to thank Bernardo Araneda,  Alex Colling,  Michael Eastwood,  Rod Gover,  Zhangwen Guo,  Derek Harland and Sean Seet for helpful conversations. The author would also like to acknowledge generous hospitality provided to them during their PhD:  
Much of the research for this article was carried out during the GRIEG meeting at the Laboratoire de Mathématiques d'Orsay in March 2024,  and 
when the author was hosted by the Ward family in January 2025.  The research was supported by the Simons Foundation
grant  SFI-MPS-T-Institutes-00010825,  and by the State Treasury funds as part of a
task commissioned by the Minister of Science and Higher Education under the project Organization of the Simons Semesters at the Banach Center - New Energies in 2026-2028 (MNiSW/2025/DAP/491). 
\section{Calculus on (almost-)Grassmannian structures}\label{spinors_and_spacetime}\index{spinor!calculus}
\begin{defn}[Almost-Grassmannian structure]
A $(2n,2)$-almost-Grassmannian structure\footnote{Our definition is slightly different to \textup{\cite{CS09}}.  Like \textup{\cite{M97}} we do not require the additional data of an isomorphism of the top exterior powers of $S$ and $S'$.  In  \textup{\cite{CS09}} the  extra data corresponds to a lift of the structure to $S(GL(2n,\mathbb{C}) \times GL(2,\mathbb{C}))$ from the subgroup of $GL(4n,\mathbb{C})$ generated by letting $GL(2n,\mathbb{C})$ and $GL(2,\mathbb{C})$ act independently on each factor of the decomposition $\mathbb{C}^{4n} \cong \mathbb{C}^{2n} \otimes \mathbb{C}^2$.  As we will see directly,  the assumption of this extra data is harmless locally.} on a complex $4n$-dimensional manifold $X$ is an isomorphism 
\begin{align}\label{paraconformal}
TX \cong S \otimes S',
\end{align}
where $S,  S'$ are vector bundles of rank $2n$ and $2$ respectively.  
\end{defn}

This generalises the canonical decomposition of the tangent bundle of the Grassmannian $F_2(\mathbb{C}^{2n+2})$ as a tensor product.  In this flat model,  the two factors are defined by the exact sequence
\begin{equation}
\begin{tikzcd}
0 \arrow[r] & S'^* \arrow[r] & T \arrow[r] & \underbrace{T/S'^*}_{:=S} \arrow[r] & 0
\end{tikzcd}
\end{equation}
where $T\to F_2(\mathbb{C}^{2n+2})$ is the trivial bundle with fibre $\mathbb{C}^{2n+2}$ and $S'^*$ is defined to be the tautological bundle.  Then,  the fibre of $TF_2(\mathbb{C}^{2n+2})$ at a point $x$ is identified with $\text{Hom}(S'^*_x,S_x)$ in the natural way.  
If $n = 1$,  sections of $S$ and $S'$ are called spinors.  For general $n$,  we will continue to refer to sections of $S'$ this way.  \index{spinor} \par 
Obviously,  there are global topological obstructions to the existence of a global decomposition (\ref{paraconformal}).  We assume we are working over a contractible neighbourhood without comment.  \par 
 \par Given an identification (\ref{paraconformal}) on $X$,  for each $x \in X$ there is a distinguished cone of vectors in $T_x\X$ of the form
\be
\label{null}
v=p\otimes q, \quad\mbox{where}\quad p\in S_x, q\in S'_x.
\ee
Fixing $q$ while varying $p$  the vectors (\ref{null}) span a $2n$--dimensional subspace of $T_x\X$ called an \textit{$\alpha$--plane.}\index{alpha-plane@$\alpha$-plane}  
Similarly,  for each fixed $p\in S_x$ there is a $2$--dimensional \textit{$\beta$--plane}\index{beta-plane@$\beta$-plane} obtained by varying $q$
in (\ref{null}).  $\alpha$--planes and $\beta$--planes generalise the classification of null planes into self-dual or anti-self-dual planes for a holomorphic conformal structure in four dimensions.  
A $2n$--dimensional submanifold of $X$ is called an $\alpha$--surface\index{alpha-surface@$\alpha$-surface} if its tangent planes are $\alpha$--planes.  A distribution consisting of $\alpha$-planes is called an $\alpha$-distribution.  $\beta$-surfaces\index{beta-surface@$\beta$-surface} and distributions are defined analogously.  \par 
In the terminology of \cite{BE91},  the structure is called \textit{right--flat}\index{right-flatness} if there is an $\alpha$--surface tangent to each $\alpha$--plane.  This is equivalent to the anti-self-duality condition if $n=1$.  Given the right--flat condition,  the twistor space may be defined as the $2n+1$-dimensional space of $\alpha$-surfaces\footnote{As a word of warning,  it is also common in literature for the projective primed spin bundle $\mathbb{P}S'$ to be referred to as the twistor space.}.  \par 
There is also the notion of \textit{algebraically compatible metrics}\index{compatible metrics!algebraically compatible}.  
\begin{align}
g = \eta \otimes \eta'
\end{align}
for $\eta,  \eta'$ non-degenerate sections of $\wedge^2S^*$ and $\wedge^2{S'}^*$ respectively.  If $n=1$ this is the conformal class defined by decreeing that the decomposable vectors under the isomorphism (\ref{paraconformal}) span the null cone.  For $n > 1$,  the decomposable vectors are a strict subset of the null vectors of such a metric.  
 \par 
 There is an additional metric compatibility condition we will consider.  It is that the Levi-Civita connection for $g = \eta \otimes \eta'$ lies in the natural family of affine connections on the structure.  Specifically, this is the family of affine connections induced by connections on $S$ and $S'$,  each of which is flat on the respective top exterior power $\wedge^{2n}S$ or $\wedge^2S'$.  This condition is automatic when $n = 1$.  In \cite{BE91},  it is shown that for $n > 1$,  a metric that is so compatible will be Einstein.  \par  

We now introduce the notation and calculus that will enable much of our analysis.  The notation used is \textit{Penrose abstract index notation}\index{Penrose abstract index notation},  detailed in \cite{PR84v1,PR84v2}.  %Everything else in this section is detailed in \cite{BE91}.  \par 
Upper-case Latin indices should be interpreted as markers denoting the type of object,  and the repetition of indices denotes the natural pairing between a bundle and its dual.  The graphical ordering of indices is not important,  as the alphabetical ordering of indices keeps track of different factors in a tensor product.  \par 
Recall we will be considering complex manifolds $X$ with an identification
of $TX$ with a tensor product $S \otimes S'$.   Sections of the bundle $S$ and $S'$  are labelled by unprimed and primed indices respectively.  A vector field may therefore be written $V^{AA'} \in \Gamma(TX)$,  the metric tensor $g_{AA'BB'} \in \Gamma(\odot^2 T^*X)$,  and the torsion of an affine connection
\begin{align}
T_{ABA'B'}{}^{CC'} \in \Gamma(\wedge^2 T^*X \otimes TX),  
\end{align}
with $T_{ABA'B'}{}^{CC'} = -T_{BAB'A'}{}^{CC'}$.  \par 
Symmetrisation and skew-symmetrisation are represented by enclosing indices in round $()$ and square $[]$ brackets respectively.  Indices between delimiters $||$ inside brackets are skipped over when symmetrising.  For example,  given a connection $\nabla_{AA'}$ on $S$,  the \textit{twistor equation}\index{twistor!equation} on a section $o_{A} \in \Gamma(S)$ is
\begin{align}
\nabla_{(A|A'}o_{|B)} = 0.  
\end{align}
The symbol $\circ$ is used to denote the totally trace-free component of an object.  \par 
 On occasion when we prefer our indices to represent components of a section of a vector bundle with respect to some trivialisation,  we use lower-case Latin indices.  In this case,  unprimed and primed indices take values in sets $\{1,...,2n\}$ and $\{0',1'\}$ respectively.  \par 
The following is Theorem 2.4 of \cite{BE91} and the appropriate analogue of the fundamental theorem of Riemannian geometry. 
 \begin{prop}[Existence and uniqueness of adapted (Weyl) connections]\label{fundamental_theorem}
Given sections $\omega_{AB...C} \in \Gamma(\wedge^{2n}S^*)$ and $\eta_{A'B'} \in \Gamma(\wedge^2 S'^*)$ there is a unique pair of connections  on $S$ and $S'$ for which these two sections are parallel and such that the induced connection on $TX$ has torsion $T_{ABA'B'}{}^{CC'}$ that is totally trace-free.  Furthermore,  this totally trace-free torsion is an invariant of the structure.  
\end{prop}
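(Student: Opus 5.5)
Fix an arbitrary pair of connections $\nabla^0$ on $S$ and $S'$ preserving $\omega$ and $\eta$; such a pair exists locally, for instance by taking flat connections in local frames and correcting them by a trace term so that the given volume forms are parallel. Every other pair with the same property has the form $\nabla = \nabla^0 + (\alpha,\beta)$, where
\begin{align}
\alpha_{AA'B}{}^{C} \in \Gamma(T^*X\otimes \mathfrak{sl}(S)), \qquad \beta_{AA'B'}{}^{C'} \in \Gamma(T^*X\otimes \mathfrak{sl}(S')).
\end{align}
Trace-freeness of $\alpha$ and $\beta$ is exactly the condition that $\omega$ and $\eta$ stay parallel. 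The induced connection on $TX = S\otimes S'$ changes by $\Gamma_{AA'BB'}{}^{CC'} = \alpha_{AA'B}{}^C\delta_{B'}{}^{C'} + \delta_B{}^C\beta_{AA'B'}{}^{C'}$. The torsion therefore changes by the skew part $\Gamma_{[AA'BB']}{}^{CC'}$, with skew taken in the pairs $AA'$, $BB'$. The problem reduces to linear algebra: show that the map
\begin{align}
\partial : T^*X\otimes(\mathfrak{sl}(S)\oplus\mathfrak{sl}(S')) \to \wedge^2T^*X\otimes TX, \qquad (\alpha,\beta)\mapsto \Gamma_{[AA'BB']}{}^{CC'},
\end{align}
is injective, and that its image is a complement to the totally trace-free part of $\wedge^2T^*X\otimes TX$.

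Injectivity is the first step. If $\partial(\alpha,\beta)=0$, then $\Gamma$ is symmetric in its first two slots. Lowering $C'$ with $\eta$ and $C$ with $\omega$ is not possible for $n>1$, so instead use the standard Sternberg argument. Write $\alpha_{AA'BC}{}^{\!}$ with $S$-indices only raised or lowered via duality. The symmetry condition then splits into the pieces obtained by decomposing $\wedge^2(S\otimes S') = \wedge^2S\otimes\odot^2S' \oplus \odot^2S\otimes\wedge^2S'$. On each piece, alternating symmetrisations force $\alpha$ to be pure trace in $B,C$ and $\beta$ to be pure trace in $B',C'$. This is the familiar vanishing of the first prolongation of $\mathfrak{gl}(2n)\oplus\mathfrak{gl}(2)\subset\mathfrak{gl}(4n)$, i.e. $H^{1,1}$-type vanishing for the $|1|$-graded Lie algebra $\mathfrak{sl}(2n+2)$. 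Imposing trace-freeness of $\alpha$ and $\beta$ then kills them, up to the single common trace direction $\alpha\sim\phi_{AA'}\delta$, $\beta\sim-\phi_{AA'}\delta$, which acts trivially on $TX$. That direction is excluded because both volume forms are fixed independently.

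The main obstacle is the second step: identifying the image of $\partial$. I would count dimensions and compute traces. The image of $\partial$ consists precisely of the torsion tensors built from trace data, namely the components obtained by contracting $C$ with $A$ or $B$ and $C'$ with $A'$ or $B'$. I would show explicitly that the two trace maps
\begin{align}
\mathrm{tr}_S, \ \mathrm{tr}_{S'} : \wedge^2T^*X\otimes TX \to \wedge^2 T^*X\otimes(\cdots),
\end{align}
restricted to $\operatorname{im}\partial$, are jointly bijective onto the space of all possible traces. This needs a careful decomposition of $\wedge^2(S^*\otimes S'^*)\otimes S\otimes S'$ into $GL(2n)\times GL(2)$-irreducibles. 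It is cleanest to compare with Kostant: $H^2$ in this homogeneity is exactly the totally trace-free part, which is the torsion component of the harmonic curvature. Given this, any connection can be uniquely corrected, by the unique $(\alpha,\beta)$ with $\partial(\alpha,\beta) = -(\text{trace part of } T^{\nabla^0})$, to one with totally trace-free torsion.

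For invariance, I would note that changing $\omega$ and $\eta$ by rescalings modifies the resulting connection by terms of the form $\Upsilon_{AA'}\delta_B{}^C$ and $\Upsilon'_{AA'}\delta_{B'}{}^{C'}$, together with $\mathfrak{sl}$-corrections. These contribute only trace-type torsion, which must then vanish because the new torsion is again trace-free. Hence the totally trace-free torsion is independent of all choices.
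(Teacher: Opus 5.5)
Your strategy is viable: you fix the volume forms first by working in $T^*X\otimes(\mathfrak{sl}(S)\oplus\mathfrak{sl}(S'))$, then normalise the torsion. This is the reverse of the order in the source the paper relies on. The paper quotes Theorem 2.4 of \cite{BE91}, whose proof (recalled in the proof of Proposition~\ref{general_change}) shows that the pairs with totally trace-free torsion form an affine space over pairs of $1$-forms $(U_{AA'},V_{AA'})$. Parallelism of $\omega$ and $\eta$ then gives two independent linear conditions that fix $U$ and $V$.

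However, your injectivity step rests on a false claim. The first prolongation of $\mathfrak{gl}(2n)\oplus\mathfrak{gl}(2)\subset\mathfrak{gl}(4n)$ does not vanish; it is $\mathfrak{g}_1\cong T^*X$. Vanishing of $H^1$ in this homogeneity says $\ker\partial$ equals $\mathfrak{g}_1$, not that it is zero. Concretely, $\alpha_{AA'B}{}^{C}=\delta_A{}^C\Upsilon_{BA'}$ and $\beta_{AA'B'}{}^{C'}=\delta_{A'}{}^{C'}\Upsilon_{AB'}$ produce a $\Gamma_{AA'BB'}{}^{CC'}$ that is symmetric under $(AA')\leftrightarrow(BB')$. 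This is precisely the change of Weyl connections of Proposition~\ref{restricted_weyl}, which alters the connection but not its torsion. So the symmetry condition does not force $\alpha,\beta$ to be pure trace. The kernel of $\partial$ on $T^*X\otimes(\mathfrak{gl}(S)\oplus\mathfrak{gl}(S'))$ is the $8n$-dimensional family
\[
\alpha_{AA'B}{}^{C}=\delta_A{}^C\Upsilon_{BA'}+\phi_{AA'}\delta_B{}^C,\qquad
\beta_{AA'B'}{}^{C'}=\delta_{A'}{}^{C'}\Upsilon_{AB'}-\phi_{AA'}\delta_{B'}{}^{C'},
\]
which is the paper's $(U,V)$ family with $U=\phi+\Upsilon$, $V=\phi-\Upsilon$.

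Your injectivity conclusion is still true, but for a different reason. The traces of this kernel element are $\Upsilon_{AA'}+2n\phi_{AA'}$ and $\Upsilon_{AA'}-2\phi_{AA'}$, and these vanish together only when $\Upsilon=\phi=0$. This computation is where fixing $\omega$ and $\eta$ independently is genuinely used. The common direction $(\phi\delta,-\phi\delta)$ you single out is not trace-free, so it never lies in your domain.

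The correction also matters for your second step, because your two claims are mutually inconsistent. If $\partial$ were injective on $T^*X\otimes\mathfrak{g}_0$, its image would have dimension $4n(4n^2+3)$. That exceeds the dimension $16n^3+8n$ of the trace-type part of $\wedge^2T^*X\otimes TX$ (total $32n^3-8n^2$, totally trace-free part $16n^3-8n^2-8n$), contradicting the Kostant decomposition you invoke.

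With the correct $4n$-dimensional kernel, transverse to $T^*X\otimes(\mathfrak{sl}(S)\oplus\mathfrak{sl}(S'))$, the restricted map is injective on a space of dimension $4n(4n^2+2)=16n^3+8n$. Its image lies inside $\operatorname{im}\partial$, which meets the totally trace-free part trivially, so it is a complement. Once this is repaired, your existence, uniqueness and invariance arguments go through.
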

The torsion always vanishes if $n=1$,  and then the pair of connections together induce the Levi-Civita connection for the holomorphic metric defined as the tensor product of the chosen sections.  
Given a choice of $\omega_{AB..C}$ and $\eta_{A'B'}$ as above,  we denote the corresponding connections by $\nabla_{AA'}$.  We call them Weyl connections\index{Weyl connections}\footnote{In the usual parlance of parabolic geometry,  these would be called \textit{exact} Weyl connections.}.  Later we will need the following formulae (see the more general Proposition \ref{general_change} for a proof)
\begin{prop}[Change of Weyl connections]\label{restricted_weyl}
The Weyl connections $\hat{\nabla}_{AA'}$ of Proposition \textup{\ref{fundamental_theorem}} corresponding to $\hat{\omega}_{AB...C} = \Omega \omega_{AB...C}$ and
$\hat{\eta}_{A'B'} = \Omega \eta_{A'B'}$ are:  
\begin{align}
\hat{\nabla}_{AA'}\mu^B \hspace{1mm} &= \nabla_{AA'}\mu^B \hspace{1mm}  + \delta_{A}^{B}\Upsilon_{CA'}\mu^{C}, \label{change_of_scales1}\\
\hat{\nabla}_{AA'}\mu^{B'} &= \nabla_{AA'}\mu^{B'} + \delta_{A'}^{B'}\Upsilon_{AC'}\mu^{C'}, \label{change_of_scales2}
\end{align}
where $\Upsilon_{AA'} = \nabla_{AA'} \log \Omega$.  
\end{prop}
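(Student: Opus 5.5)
We prove the formulae by exhibiting the candidate connections and checking the two properties that, by the uniqueness in Proposition \ref{fundamental_theorem}, characterise the Weyl connections of the rescaled pair: $\hat\omega$ and $\hat\eta$ must be parallel, and the torsion must be totally trace-free. So we define $\hat\nabla_{AA'}$ on $S$ and $S'$ by (\ref{change_of_scales1}) and (\ref{change_of_scales2}), extend them to dual bundles and tensor products by the Leibniz rule, and verify both properties.

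\emph{Parallelism.} Dualising (\ref{change_of_scales1}) gives, on $\lambda_B \in \Gamma(S^*)$,
\[
\hat\nabla_{AA'}\lambda_B = \nabla_{AA'}\lambda_B - \Upsilon_{BA'}\lambda_A .
\]
Applied to $\omega_{B_1\dots B_{2n}}$, this produces a correction $-\sum_i \Upsilon_{B_iA'}\,\omega_{B_1\dots A\dots B_{2n}}$, with $A$ in the $i$-th slot. For a top form $\omega$ on the rank-$2n$ bundle $S$ we have the identity $\sum_i \omega_{B_1\dots A\dots B_{2n}}\,\delta^{C}_{B_i} = \delta^{C}_{A}\,\omega_{B_1\dots B_{2n}}$, which holds because $\wedge^{2n+1}S^*=0$. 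Contracting it with $\Upsilon_{CA'}$ shows that the correction equals $-\Upsilon_{AA'}\,\omega_{B_1\dots B_{2n}}$. Hence
\[
\hat\nabla_{AA'}(\Omega\omega) = \Omega\bigl(\nabla_{AA'}\omega + \Upsilon_{AA'}\omega - \Upsilon_{AA'}\omega\bigr) = 0,
\]
using $\nabla\omega=0$ and $\nabla_{AA'}\Omega = \Omega\Upsilon_{AA'}$. The same rank-$2$ identity handles $\eta_{A'B'}$ with (\ref{change_of_scales2}), so $\hat\nabla\hat\eta=0$.

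\emph{Torsion.} The induced connection on $TX = S\otimes S'$ acts on $V^{BB'}$ by
\[
\hat\nabla_{AA'}V^{BB'} = \nabla_{AA'}V^{BB'} + Q_{AA'CC'}{}^{BB'}V^{CC'},\qquad Q_{AA'CC'}{}^{BB'} = \delta_A^B\Upsilon_{CA'}\delta_{C'}^{B'} + \delta_{A'}^{B'}\Upsilon_{AC'}\delta_C^B .
\]
The torsions therefore differ by the part of $Q$ that is skew in the pair $(AA'),(CC')$:
\[
\hat T - T = -\bigl(Q_{AA'CC'}{}^{BB'} - Q_{CC'AA'}{}^{BB'}\bigr).
\]
Swapping $(AA')\leftrightarrow(CC')$ in the first term of $Q$ gives $\delta_C^B\Upsilon_{AC'}\delta_{A'}^{B'}$, which is exactly the second term of the unswapped $Q$. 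Likewise the swapped second term, $\delta_{C'}^{B'}\Upsilon_{CA'}\delta_A^B$, is exactly the unswapped first term. Thus $Q$ is symmetric in the two pairs, $\hat T = T$, and the torsion stays totally trace-free. Uniqueness then identifies $\hat\nabla$ with the Weyl connections of $(\hat\omega,\hat\eta)$.

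The only delicate step is the top-form identity used for parallelism: one must check that the sum over slots collapses to a trace, with the correct sign. Once that holds, the same computation for $S'$ is automatic and the torsion check is a direct index swap.
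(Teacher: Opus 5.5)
Your proof is correct, but it runs in the opposite direction from the paper's.

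You write the answer down and verify it. The rescaled sections $\Omega\omega$ and $\Omega\eta$ are parallel by the top-degree identity, which you state with the right sign. The difference tensor $Q$ on $TX$ is symmetric in its two slots, so the torsion is literally unchanged. The uniqueness clause of Proposition \ref{fundamental_theorem} then finishes the argument.

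The paper instead derives the formulae as the special case $\tilde\Omega=\Omega$ of Proposition \ref{general_change}. It takes from the proof of Theorem 2.4 in \cite{BE91} the general change of the pair of connections preserving totally trace-free torsion, parametrised by two $1$-forms $U_{AA'},V_{AA'}$. It then solves for $U,V$ by requiring $\Omega\omega$ and $\tilde\Omega\eta$ to be parallel for independent $\Omega,\tilde\Omega$.

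Your route is shorter and self-contained: it uses only uniqueness, not the characterisation of all torsion-preserving changes. It also gives the slightly stronger conclusion $\hat T=T$. The paper's route produces the answer rather than presupposing it. It also yields the general formulae (\ref{full_change_1})--(\ref{full_change_2}) needed later, e.g.\ in Proposition \ref{can_fix_iso} with $\tilde\Omega=\Omega^{-1/n}$ and in \S\ref{AGEinstein} to justify fixing $\psi$. Your verification method would confirm those general formulae too once they are written down, since the corresponding difference tensor is again symmetric.
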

From the above,  we may derive formulae for the change of the induced connections on the relevant associated bundles (tensor products and so on).  \par 
We will also utilise the following notation: We may write $\mathcal{O}[k]$ for the \textit{density bundle}\index{density bundle} $(\wedge^2 S')^k$.  For a vector bundle $E$ we write $E[k] := E \otimes (\wedge^2 S')^k$ and we say sections are ``of weight $k$".   There is a canonical section $\epsilon^{A'B'} \in \Gamma(\wedge^2S'[-1]) = \Gamma(\wedge^2S'^*[1])$ which we will use to raise and lower primed indices.  For example,  if $o^{A'} \in \Gamma(S'[k])$ then $o_{B'} := o^{A'}{\epsilon}_{A'B'} \in \Gamma(S'^*[k+1])$ and so on.  The reason for this contrivance is that the section is parallel for any Weyl connections,  so raising and lowering indices with this object commutes with $\nabla_{AA'}$,  and this leaves expressions unambiguous.  If $\omega \in \Gamma(\mathcal{O}[k])$ then (\ref{change_of_scales1}) implies
\begin{align}\label{change_of_scales3} 
\hat{\nabla}_{AA'}\omega = \nabla_{AA'}\omega + k\Upsilon_{AA'}\omega 
\end{align}
under a change of scale as in Proposition \ref{restricted_weyl}.  \par    
In addition to \cite{BE91},  a good primer on the calculus of ``weighted sections" is \cite{BEG94} (in the conformal case). 
\begin{defn}[Compatible metrics]\label{compat_def}
We say a holomorphic metric $g_{AA'BB'}$ is compatible if it is algebraically compatible,  that is 
\begin{align}
g_{AA'BB'} = \eta_{AB}\eta_{A'B'}
\end{align}
for some sections $\eta_{AB},  \eta_{A'B'}$ of  $\wedge^2 S^*$ and $\wedge^2 S'^*$,  and in addition there exist Weyl connections $\nabla_{AA'}$ inducing the Levi-Civita connection on $TX$.  
\end{defn}
Note that this implies compatible metrics only exist\index{compatible metrics} when the invariant torsion vanishes.  \par 
For $n \ge 2$,  these metrics are the complexified analogue of quaternion-K\"ahler \index{quaternion-K\"ahler metric}metrics\footnote{Often,  the definition of quaternion-K\"ahler metrics excludes Ricci-flat (hyper-K\"ahler) metrics.}: they are metrics with holonomy in $Sp(2n,\mathbb{C}) Sp(2,\mathbb{C}) \hookrightarrow SO(4n,\mathbb{C})$.  \par 
There is another important difference between the $n=1$ (holomorphic conformal geometry) and the $n>1$ settings.  While right-flatness for $n=1$ is equivalent to vanishing of the self-dual Weyl tensor,  when $n > 1$ right-flatness is equivalent to vanishing of part of the intrinsic torsion,  which implies the vanishing of part of the invariant curvature via a differential identity.  \par The following is from \cite{BE91}: 
\begin{prop}[Right-flatness]
When $n>1$,  a necessary and sufficient condition for every $\alpha$-plane to be tangent to an $\alpha$-surface \textup{(}right-flatness\textup{)} is
\begin{align}\label{right_flat_torsion}
T_{[AB](A'B')}{}^{CC'} = 0,
\end{align}
where $T_{ABA'B'}{}^{CC'}$ is the torsion of the induced connection on $TX$ corresponding to a \textup{(}equivalently,  any\textup{)} pair of Weyl connections $\nabla_{AA'}$ 
\end{prop}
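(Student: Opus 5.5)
The plan is to use the Frobenius theorem together with the first structure equation for the torsion of a Weyl connection $\nabla_{AA'}$ from Proposition \ref{fundamental_theorem}. An $\alpha$-plane at $x$ is determined by a primed spinor $\pi^{A'}\in S'_x$, up to scale. Its vectors are $V^{AA'}=\mu^A\pi^{A'}$ with $\mu^A\in S_x$ arbitrary. Because right-flatness is local and pointwise in $\pi$, the question reduces to the following: for each $x$ and each $\pi_x$, there should be an integrable $\alpha$-distribution through $x$ whose plane at $x$ is the given one.

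First I characterise integrability of a given $\alpha$-distribution $D_\pi=\{\mu^A\pi^{A'}\}$, where $\pi^{A'}$ is a local section of $S'$, defined only up to scale. For $V=\mu^A\pi^{A'}$ and $W=\nu^B\pi^{B'}$ I expand
\begin{align*}
[V,W]^{CC'}=V\cdot\nabla W^{CC'}-W\cdot\nabla V^{CC'}-T_{ABA'B'}{}^{CC'}\mu^A\nu^B\pi^{A'}\pi^{B'}.
\end{align*}
The derivative terms lie in $D_\pi$ modulo the terms $\mu^A\nu^C\pi^{A'}\nabla_{AA'}\pi^{C'}$ and $-\nu^B\mu^C\pi^{B'}\nabla_{BB'}\pi^{C'}$. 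Projecting to $TX/D_\pi\cong S\otimes(S'/\langle\pi\rangle)$ means contracting with $\pi_{C'}$. This gives the integrability condition
\begin{align*}
2\mu^{[A}\nu^{C]}\pi^{A'}\pi_{C'}\nabla_{AA'}\pi^{C'}+T_{ABA'B'}{}^{CC'}\mu^A\nu^B\pi^{A'}\pi^{B'}\pi_{C'}=0.
\end{align*}
The torsion term only sees $T_{[AB](A'B')}{}^{CC'}$, because $\pi^{A'}\pi^{B'}$ is symmetric and the whole expression must be skew in $\mu,\nu$. Write $\xi_A:=\pi^{A'}\pi_{C'}\nabla_{AA'}\pi^{C'}$. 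The condition then reads
\begin{align*}
2\mu^{[A}\nu^{C]}\xi_A+\tau_{AB}{}^{C}\mu^A\nu^B=0,\qquad \tau_{AB}{}^C:=T_{[AB](A'B')}{}^{CC'}\pi^{A'}\pi^{B'}\pi_{C'}.
\end{align*}

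Sufficiency then follows directly. If (\ref{right_flat_torsion}) holds, take $\pi^{A'}$ to be any local extension of $\pi_x$ satisfying $\pi^{A'}\nabla_{AA'}\pi^{C'}=0$ (equivalently, any extension with $\xi_A=0$). Such an extension exists, for instance by making $\pi$ parallel along the leaves of some chart. Indeed, only $\xi_A$ at all points matters, and we can demand $\pi^{A'}\nabla_{AA'}\pi^{C'}\propto\pi^{C'}$ by solving an ODE-type system along a foliation. Simplest is to take $\pi$ parallel along radial geodesics and adjust; I expect this step to need care, so I would rather choose $\pi$ so that $\nabla\pi$ vanishes at the point and observe that integrability need only hold near $x$. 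Alternatively, and more cleanly, the condition is tensorial in the first derivative of $\pi$. In the analytic category, the Cartan–Kähler argument (or the standard twistor argument of \cite{BE91}) gives a distribution solving $\xi_A=0$, since that is a first-order system for the projective section $[\pi]$ whose compatibility conditions are exactly the vanishing of $\tau$.

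Necessity comes from a trace argument. Suppose every $\alpha$-plane is tangent to an $\alpha$-surface; these give local distributions satisfying the condition at $x$ for every $\pi_x$. Take the trace over $C$ and $B$ in $2\mu^{[A}\delta^{C]}_B\xi_A+\tau_{AB}{}^C\mu^A=0$. This yields $(1-2n)\mu^A\xi_A+\tau_{AC}{}^C\mu^A=0$. Since $T$ is totally trace-free, $\tau_{AC}{}^C=0$, and so $\xi_A=0$ because $n\ge1$. Substituting back gives $\tau_{AB}{}^C\mu^A\nu^B=0$ for all $\mu,\nu$, so $\tau$ vanishes for every $\pi$. Polarising in $\pi$, the part of $T_{[AB](A'B')}{}^{CC'}\pi_{C'}$ symmetric in $A'B'D'$ (with $\pi_{C'}$ lowered) vanishes. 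The remaining piece is the $\epsilon$-trace part $\epsilon_{C'(A'}\sigma_{B')}$, and it is excluded by the trace-freeness of $T$ in the primed indices. Hence (\ref{right_flat_torsion}) holds. Independence of the choice of Weyl connection follows because the torsion is invariant by Proposition \ref{fundamental_theorem}. The main obstacle is the existence part of sufficiency, namely solving for an adapted $\pi$ with $\xi_A=0$.
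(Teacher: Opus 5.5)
Your necessity half is fine. The bracket computation can be done along a single $\alpha$-surface, since only derivatives tangent to it enter $\xi_A$. The trace then gives $(2n-1)\xi_A=0$, and polarisation together with primed trace-freeness kills $T_{[AB](A'B')}{}^{CC'}$. (The paper gives no proof of its own here; it quotes the result from \cite{BE91}.)

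The gap is in the sufficiency half, and it is more than a matter of care. Once $\tau\equiv 0$, you still need, through each $(x,[\pi_x])$, a section $[\pi]$ with $\xi_A=\pi^{A'}\pi_{C'}\nabla_{AA'}\pi^{C'}=0$ on a neighbourhood, or along a $2n$-dimensional submanifold. This is $2n$ first-order equations for a single function (an affine coordinate of $[\pi]$), so it is overdetermined, and none of your devices solves it. Arranging $\nabla\pi=0$ at $x$ only gives $\xi_A(x)=0$, whereas Frobenius needs involutivity on an open set. More seriously, the compatibility conditions are \emph{not} just $\tau=0$. Apply $\pi^{B'}\nabla_{BB'}$ to $\pi^{A'}\nabla_{AA'}\pi^{C'}=\alpha_A\pi^{C'}$, skew in $AB$ and contract with $\pi_{C'}$. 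Besides a torsion term that vanishes under your hypothesis, you get $\pi^{A'}\pi^{B'}\pi^{D'}\pi_{C'}R_{[A|A'|B]B'}{}^{C'}{}_{D'}=0$, where $R$ is the curvature of the connection on $S'$. So the component $\tilde\Psi_{ABA'B'C'D'}\in\Gamma(\wedge^2S^*\otimes\odot^4S'^*)$ of that curvature must vanish. A quick check that something is missing: your sufficiency argument never uses $n>1$. For $n=1$, where $T\equiv 0$ automatically, it would show that every holomorphic four-dimensional conformal structure is anti-self-dual; there $\tilde\Psi$ is exactly the self-dual Weyl spinor.

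The missing idea is to lift to $\mathbb{P}S'$ and to use $n>1$ in a Bianchi identity. Let $\mathcal{D}$ be the rank-$2n$ distribution on $\mathbb{P}S'$ whose fibre at $(x,[\pi])$ is the horizontal lift of the $\alpha$-plane $\{\mu^A\pi^{A'}\}$. If $\mathcal{D}$ is integrable, its leaves project to $\alpha$-surfaces through every $\alpha$-plane. Because $\mathcal{D}$ is defined on an open subset of $\mathbb{P}S'$, your pointwise trick is legitimate there. Compute $[\mathcal{D},\mathcal{D}]$ at a point using an extension with $\nabla\pi(x)=0$: you get a horizontal obstruction, which is your $\tau$, and a vertical one, $\tilde\Psi(\pi,\pi,\pi,\pi)$.

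Your hypothesis in fact gives $T=0$, since the $\epsilon_{A'B'}$-part of a totally trace-free torsion vanishes by the trace argument of \S\ref{KNT_repeat}. The first Bianchi identity $R_{[abc]}{}^{d}=0$, projected onto $\wedge^3S^*\otimes\odot^4S'^*\otimes S$, then reads $\delta_{[C}{}^{E}\tilde\Psi_{AB]A'B'C'D'}=0$. Tracing over $C,E$ gives $\tfrac{2n-2}{3}\tilde\Psi_{ABA'B'C'D'}=0$. So $\tilde\Psi=0$ precisely because $n>1$, both obstructions vanish, and $\mathcal{D}$ is integrable.
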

So for $n > 1$,  right-flatness is required if there exists a compatible metric.  \par

Since we will be interested in Einstein metrics compatible with anti-self-dual conformal structures and more generally,  compatible metrics for almost-Grassmannian geometries,  we make the following definition: 
\begin{defn}[Torsion-free Grassmannian structure (TFG)]
We say that a $(2n,2)$-almost-Grassmannian structure is a torsion-free Grassmannian structure\index{torsion-free Grassmannian structure} \textup{(TFG)} if it is right-flat and torsion-free. 
\end{defn}
From now on we work in the setting of TFGs.  

\begin{prop}[Curvature of TFGs (\textup{\cite{BE91}} Eq.  13)]\index{torsion-free Grassmannian structure!curvature of}
Given a TFG for any $n \ge 1$,  the curvature of the connections $\nabla_{AA'}$ is determined by
\begin{align}
\nabla_{(A|A'}\nabla_{|B)}{}^{A'}\mu^{D} &= (\Psi_{ABC}{}^{D}- 2\Lambda_{C(A}\delta_{B)}{}^{D})\mu^{C}, \label{ASD_and_scalar_curvature} \\
\nabla_{(A|A'}\nabla_{|B)}{}^{A'}\nu^{B'} &= \Phi_{ABA'}{}^{B'}\nu^{A'}, \label{Einstein_curvature_primed} \\
\nabla_{[A}{}^{(A'}\nabla_{B]}{}^{B')}\mu^{D} &=  \delta_{[A}{}^{D}\Phi_{B]C}{}^{A'B'}\mu^{C},
 \label{Einstein_curvature_unprimed} \\
\nabla_{[A}{}^{(A'}\nabla_{B]}{}^{B')}\mu^{C'} &= \Lambda_{AB}\epsilon^{C'(A'}\mu^{B')}, \label{scalar_curvature}
\end{align}
where the objects on the right-hand side are sections of bundles
\begin{align}
\Phi_{(AB)(A'B')} &= \Phi_{ABA'B'} \in \Gamma(\odot^2 S^* \otimes \odot^2S'^*),  \\
\Psi_{(ABC)}{}^{D} &= \Psi_{ABC}{}^{D} \in \Gamma((\odot^3 S^* \otimes S)_{\circ}[-1]),\\
 \Lambda_{[AB]} &= \Lambda_{AB} \in \Gamma(\wedge^2 S^*[-1]).
\end{align}
$\Psi_{ABC}{}^{D}$ is invariant in the sense that it does not depend on the Weyl connections $\nabla_{AA'}$.  
\end{prop}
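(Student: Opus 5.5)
We decompose the curvature of the pair of Weyl connections $\nabla_{AA'}$ into irreducible pieces under $GL(2n)\times GL(2)$, and then use the Bianchi identities, available because the torsion vanishes, to eliminate all pieces except those listed.

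First, the commutator. Since the torsion vanishes, the commutator $[\nabla_{AA'},\nabla_{BB'}]$ on any bundle is the curvature two-form. We split it using $\wedge^2(S\otimes S') \cong (\odot^2 S\otimes \wedge^2 S')\oplus(\wedge^2 S\otimes \odot^2S')$ (all indices lowered). This gives
\begin{align*}
2\nabla_{[AA'}\nabla_{BB']} = \epsilon_{A'B'}\,\nabla_{(A|C'}\nabla_{|B)}{}^{C'} + 2\nabla_{[A|(A'}\nabla_{|B](B'}.
\end{align*}
These two pieces are exactly the operators on the left-hand sides of (\ref{ASD_and_scalar_curvature})--(\ref{scalar_curvature}). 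Each is a derivation with values in $\mathfrak{gl}(S)\oplus\mathfrak{gl}(S')$. The $\mathfrak{gl}(S')$ part is in fact $\mathfrak{sl}(S')$, because $\eta_{A'B'}$, and hence $\epsilon$, is parallel. Likewise the $\mathfrak{gl}(S)$ part is in $\mathfrak{sl}(S)$, because $\omega$ is parallel.

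Second, we write down the a priori curvature components.
\begin{itemize}
\item The $\odot^2S^*\otimes\mathfrak{sl}(S)$ piece, acting on $\mu^D$: a tensor $R_{ABC}{}^D$ symmetric in $AB$ and trace-free in $C,D$.
\item The $\odot^2 S^*\otimes\mathfrak{sl}(S')$ piece: this is $\Phi_{ABA'}{}^{B'}$.
\item The $\wedge^2S^*\otimes\odot^2S'^*\otimes\mathfrak{sl}(S)$ piece: a tensor $Q_{AB}{}^{A'B'}{}_C{}^D$.
\item The $\wedge^2S^*\otimes\odot^2 S'^*\otimes \mathfrak{sl}(S')$ piece: since $\odot^2S'\cong\mathfrak{sl}(S')$, this is a tensor $L_{AB}{}^{A'B'C'}{}_{D'}$.
\end{itemize}

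Third, we impose the first Bianchi identity $R_{[abc]}{}^d=0$ on $TX$. Writing the action on $\mu^D\nu^{D'}$ and skew-symmetrising over three pairs $AA',BB',CC'$, we decompose by $S'$-symmetry type:
\begin{itemize}
\item the totally symmetric part in $A'B'C'$;
\item the mixed part, where a single $\epsilon$ contracts two primed indices.
\end{itemize}
The identity should force several things.
\begin{enumerate}
\item $L$ is pure trace, $L_{AB}{}^{A'B'C'}{}_{D'}=\Lambda_{AB}\,\epsilon^{C'(A'}\delta^{B')}_{D'}$, which gives (\ref{scalar_curvature}).
\item $Q$ reduces to $\delta_{[A}{}^D\Phi_{B]C}{}^{A'B'}$, with the same $\Phi$ as in the primed curvature (\ref{Einstein_curvature_unprimed}); the trace term needed for $\mathfrak{sl}$ is absorbed into the $\Lambda$ part after normalisation.
\item In $R_{ABC}{}^D$, the part with $ABC$ totally symmetric (trace-free) is the free piece $\Psi$. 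The remaining part is a trace proportional to $\Lambda_{C(A}\delta_{B)}{}^D$, giving (\ref{ASD_and_scalar_curvature}).
\end{enumerate}
This representation-theoretic bookkeeping is the main obstacle. One has to check carefully that no other irreducible component survives. In particular, right-flatness is already encoded in torsion-freeness here: for $n=1$ it removes the self-dual Weyl piece, whose analogue would be an extra component valued in $\odot^4 S'$ inside $L$. One also has to check that the coefficients relating $\Lambda$ and $\Phi$ across the primed and unprimed equations come out as stated. I would carry this out by contracting the Bianchi identity with $\epsilon^{A'B'}$ and symmetrising the rest, comparing the resulting components one by one.

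Finally, we prove invariance of $\Psi$. Under a change of scale, Proposition \ref{restricted_weyl} gives
\begin{align*}
\hat\nabla_{(A|A'}\hat\nabla_{|B)}{}^{A'}\mu^D - \nabla_{(A|A'}\nabla_{|B)}{}^{A'}\mu^D
\end{align*}
in terms of $\nabla\Upsilon$ and $\Upsilon\Upsilon$ terms. Every such term carries a $\delta$ with $D$ or with a free index, so it affects only the trace and $\Lambda$ parts. Hence $\Psi$, the totally trace-free part, is unchanged.
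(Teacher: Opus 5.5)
You should know that the paper gives no proof of this proposition: it is quoted from Bailey--Eastwood, and only differential consequences of the Bianchi identity are worked out later, in Appendix~\ref{bianchi_appen}. Your outline is the natural route: decompose the curvature of the exact Weyl connections into irreducibles of $\wedge^2T^*X\otimes(\mathfrak{sl}(S)\oplus\mathfrak{sl}(S'))$, then cut down with the algebraic Bianchi identity.

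Your invariance argument for $\Psi$ is fine. Every difference term has the form $\delta_{(A}{}^{D}X_{B)C}$, which has no component in $(\odot^3S^*\otimes S)_\circ$. The independent rescalings of Proposition~\ref{general_change} only add multiples of the identity with exact coefficient, and these contribute no curvature.

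\textbf{Step (1) fails for $n=1$.} Lower $D'$ and symmetrise the Bianchi identity over $A'B'C'D'$. The $\mathfrak{sl}(S)$ terms carry $\epsilon_{C'D'}$ and the $\Phi$ term carries $\epsilon_{A'B'}$, so both drop out. What survives is $\delta_{[C}{}^{D}L_{AB](A'B'C'D')}=0$, whose trace over $C,D$ is $\tfrac{2n-2}{3}L_{AB(A'B'C'D')}=0$. So the Bianchi identity kills the $\wedge^2S^*\otimes\odot^4S'^*$ part of $L$ only when $n>1$.

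For $n=1$ this component is exactly the self-dual Weyl spinor, and the Bianchi identity says nothing about it. Torsion-freeness cannot remove it either, because for $n=1$ the torsion vanishes automatically (\S\ref{KNT_repeat}). Your remark that right-flatness is ``already encoded in torsion-freeness'' is therefore true only for $n>1$. For $n=1$ you must invoke the right-flat hypothesis itself, through its equivalence with anti-self-duality, to conclude that $L$ is pure trace.

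\textbf{The main computation is deferred, not done.} The real content of the proposition is the bookkeeping you postpone. You need to show that the Bianchi identity, together with exactness of the Weyl connection, kills:
\begin{itemize}
\item the $(\mathbb{S}_{(2,1)}S^*\otimes S)_\circ$ part of $R_{ABC}{}^{D}$;
\item the symmetric part of its trace $R_{ABC}{}^{A}$, which must come out as $-(2n+1)\Lambda_{CB}$;
\item the totally trace-free part of $Q$ and the skew part of its trace;
\item the $\wedge^2S^*\otimes\odot^2S'^*$ part of $L$.
\end{itemize}
You also need to show that the surviving pieces are linked with relative coefficients exactly $-2$ (for $\Lambda$) and $1$ (for $\Phi$). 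Saying the identity ``should force'' these leaves them unproved.

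A small correction: $\delta_{[A}{}^{D}\Phi_{B]C}{}^{A'B'}$ is already trace-free in $C,D$ because $\Phi$ is symmetric, so nothing needs to be absorbed into $\Lambda$.

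An efficient way to finish is to use the algebraic fact (Kostant's computation of $H^2$) that torsion-free curvature with values in $\mathfrak{sl}(S)\oplus\mathfrak{sl}(S')$ splits into two parts:
\begin{itemize}
\item the harmonic part, which is $\Psi$, plus the self-dual Weyl spinor when $n=1$;
\item the algebraic image of a tensor $P_{AA'BB'}$ whose two mixed-symmetry parts vanish by exactness.
\end{itemize}
Expanding that image gives the four formulae with the stated coefficients, consistent with \eqref{schouten_change}.
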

We define the Schouten tensor\index{Schouten tensor}:
\begin{align}
P_{AA'BB'} := \Phi_{ABA'B'} - \Lambda_{AB}\epsilon_{A'B'}.  
\end{align}
The reason for defining it is the neat rule for its transformation under a change in Weyl connections (\ref{change_of_scales1}) and (\ref{change_of_scales2}):
\begin{align}\label{schouten_change}
\hat{P}_{AA'BB'} = P_{AA'BB'} - \nabla_{AA'}\Upsilon_{BB'} + \Upsilon_{AB'}\Upsilon_{BA'}.  
\end{align}
All the above formulae will look familiar to practitioners of spinor calculus in four-dimensional anti-self-dual conformal geometry.  Indeed,  in the $n=1$ case they reduce to the formulae (2.6) and (2.7) in \cite{DT14}.  In particular,  the object $\Lambda_{AB}$ generalises the Ricci scalar.  $\Phi_{ABA'B'}$ is the analogue of the trace-free Ricci tensor.  Its vanishing is a generalisation of the Einstein condition.  Lastly,  the invariant piece $\Psi_{ABC}{}^{D}$ is the generalisation of the anti-self-dual Weyl tensor.  The general theory of parabolic geometries,  (see \cite{CS09}) implies the structure is locally isomorphic to $F_2(\mathbb{C}^{2n+2})$ given its vanishing.  \par
Note that the generalised Ricci-flatness condition ($\Phi_{ABA'B'} = 0,  \Lambda_{AB} = 0$) implies that the connection on $S'$ is flat.  The local holonomy is reduced to $SL(2n,\mathbb{C})$,  and it is equivalent to the admission of a parallel quaternionic structure.  If Weyl connections satisfying this condition are compatible with a metric,  then this is equivalent to the hyper-K\"ahler condition,  and the holonomy is further reduced to $Sp(2n,\mathbb{C})$. \index{hyper-K\"ahler}\index{Ricci-flat!scale}
\begin{exmp}[Hyper-Hermitian examples]\label{hyper_hermitian_example}
Define vector fields:
\begin{align}
U_{a} &= \frac{\partial}{\partial z^{a}} - \frac{\partial W^{b}}{\partial \theta^{a}}\frac{\partial}{\partial \theta^{b}},  \\ 
V_{a} &= \frac{\partial}{\partial \theta^{a}} .
\end{align}
We locally define vector bundles $S$ and $S'$ with trivialisations $\{\sigma_{a}\}_{a=1}^{2n}$ and $\{o',  \iota'\}$ respectively,  and define an isomorphism $TX \cong S \otimes S'$ by 
\begin{align}
U_{a} &\mapsto \sigma_{a} \otimes o',  \quad
V_{a} \mapsto \sigma_{a} \otimes \iota'.  \label{twistor_to_AG}
\end{align}
The right-flat condition is
\begin{align}
\frac{\partial W^{c}}{\partial z^{[a} \partial \theta^{b]}} - \frac{\partial W^{d}}{\partial \theta^{[a}}\frac{\partial^2 W^{c}}{\partial \theta^{b]} \partial \theta^{d}} = 0,  \label{hyper_hermitian_right_flat}
\end{align}
which is a system related to Pleba\'nski's second heavenly equation \textup{\cite{Pleb}}.  
We may write down the Weyl connections associated to the \textup{(}duals\textup{)} of $\sigma_1 \wedge ... \wedge \sigma_{2n}$,  and $o' \wedge \iota'$.  
Let $e_{a0'} = V_{a}$ and $e_{a1'} = U_{a}$.  
Assuming \textup{(\ref{hyper_hermitian_right_flat})},  one may check that the following connections on $S$ and $S'$,  with connection coefficients given in the trivialisations defined by $\{\sigma_{a}\}_{a=1}^{2n}$ and $\{o',\iota'\}$,  annihilate the scales and induce a torsion-free connection on $TX$.  
\begin{align}
\Gamma_{aa'b}{}^{c} &= \Bigg(\frac{\partial^2 W^{c}}{\partial \theta^{a} \partial \theta^{b}}-\frac{1}{2n+2}\frac{\partial ^2W^{d}}{\partial \theta^{a} \partial  \theta^{d}}\delta_{b}{}^{c}  - \frac{2}{2n+2}\frac{\partial ^2W^{d}}{\partial \theta^{b} \partial \theta^{d}}\delta_{a}{}^{c} \Bigg)\delta_{a'}{}^{1'}.  \\
\Gamma_{aa'b'}{}^{c'} &=-\frac{1}{2n+2} \Bigg(\epsilon_{a'b'}\frac{\partial^2 W^{d}}{\partial \theta^{a} \partial \theta^{d}}\delta_{0'}{}^{c'} + \frac{\partial^2 W^{c}}{\partial \theta^{a} \partial  \theta^{c}}\delta_{a'}{}^{c'}\delta_{b'}{}^{1'}\Bigg).  
\end{align}
Thus a solution $W^{a}$ to \textup{(\ref{hyper_hermitian_right_flat})} defines a TFG.  
We calculate the components $\Psi_{abc}{}^{d}$ of the invariant curvature as 
\begin{align}
\Psi_{abc}{}^{d} = \frac{\partial^3 W^d}{\partial \theta^{a} \partial \theta^{b} \partial \theta^{c}} - \frac{3}{2n+2}\frac{\partial^3 W^e}{\partial \theta^{e} \partial \theta^{(a} \partial \theta^{b}}\delta_{c)}{}^{d}.  
\end{align}
The construction and formulae here generalise the four-dimensional hyper-Hermitian conformal structures in \textup{\cite{D99}}.  \par 
One straightforward construction of a family of non-flat TFGs is the following: Take a non-zero constant,  degenerate symmetric tensor $T_{abc}$ and a non-zero vector $V^{a}$ such that $V^{a}T_{abc} = 0$.  Then taking 
\begin{align}\label{simple_soln}
W^{a} = V^{a}T_{bcd}\theta^{b}\theta^{c}\theta^{d}
\end{align}
defines a solution to \textup{(\ref{hyper_hermitian_right_flat})},  and the invariant curvature never vanishes.  The connection on $S'$ is flat in the given scale.  \par
\end{exmp}
Later,  we will discuss symmetries of Grassmannian structures.  Their generators generalise the notion of conformal Killing vector:
\begin{defn}[Infinitesimal symmetry]
An infinitesimal symmetry $W$ is a vector field $W \in \Gamma(TX)$ generating a one-parameter family of diffeomorphisms preserving the decomposition $TX \cong S \otimes S'$.  If $n = 1$,  we call an infinitesimal symmetry a conformal Killing vector.  
\end{defn}
Consider a one-parameter family $\phi_W^t$ of diffeomorphisms for $t \in (-\epsilon,\epsilon)$ preserving the cone of simple tensors.  Then we have,  for any decomposable element $\sigma(0) \otimes \tau(0) \in T_pX$.  
\begin{align}\label{cone_curve}
d\phi_W^t(\sigma(0) \otimes \tau(0)) = \sigma(t) \otimes \tau(t) 
\end{align}
for some curves $\sigma(t): (-\epsilon,\epsilon) \to S$,  $\tau(t): (-\epsilon,\epsilon) \to S'$,  lifting $\phi_W^t(p): (-\epsilon,\epsilon) \to X$.  Differentiating (\ref{cone_curve}) gives 
\begin{align}
[W,  \sigma \otimes \tau] = \Gamma \otimes \tau + \sigma \otimes \Pi.
\end{align}
In spinor notation the defining equation is 
\begin{align}
\nabla_{AA'}W^{BB'} = \phi_{A'}{}^{B'}\delta_{A}^{B} + \psi_{A}{}^{B}\delta_{A'}^{B'} + \frac{1}{2}\lambda\delta_{A}^{B}\delta_{A'}^{B'}
\end{align}
for trace-free $ \phi_{A'}{}^{B'},   \psi_{A}{}^{B}$.  \par  
In the case there is a compatible metric in the scale,  then the above with $\lambda = 0$,  and $\psi_{[A}{}^C\eta_{B]C} = 0$ (the second condition is automatic when $n=1$) defines the \textit{Killing equation} for that metric.  
\section{Conformally Einstein anti-self-dual metrics}\label{KNT_repeat}
In this section we consider the $n=1$ case,  when a TFG is the same as a holomorphic conformal structure \index{conformal structure} \index{ASD conformal structure} $(X,[g])$ with anti-self-dual Weyl tensor.  In particular,  we ask when such a conformal structure has a scale in which the metric is Einstein.  The arguments in this section are a straightforward modification of those found in \cite{KNT85} to the anti-self-dual setting,  but provide a good exercise before we proceed to generalise to higher dimensions.  \par  
Because $n = 1$,  a choice of scales $\eta_{AB} \in \Gamma(\wedge^2S^*),  \eta_{A'B'}  \in \Gamma(\wedge^2S'^*)$ is the same as a choice of metric
\begin{align}
g_{AA'BB'} = \eta_{AB}\eta_{A'B'} 
\end{align}
in the conformal class.  The induced connection on $TX$ is obviously metric and we may decompose the torsion,  using the fact $n=1$,  as
\begin{align}\label{torsion_asd}
T_{ABA'B'}{}^{CC'} = \epsilon_{AB}F_{A'B'}{}^{CC'} + \epsilon_{A'B'}G_{(AB)}{}^{CC'} 
\end{align}
for $F_{A'B'}{}^{CC'} \in \Gamma(\odot^2 S'^* \otimes TX),  G_{AB}{}^{CC'} \in \Gamma(\odot^2 S^* \otimes TX)$.  That the summands on the right-hand side are trace-free implies they vanish. 
So the Weyl connections always induce the Levi-Civita connection.  \par To obtain all the metrics in the conformal class we need only consider simultaneously rescaling $\eta_{AB}$ and $\eta_{A'B'}$ by $\Omega$ and hence the formulae (\ref{change_of_scales1}) and (\ref{change_of_scales2}).  In other words we may,  without loss of generality,  choose an isomorphism $\wedge ^2 S^* \cong \wedge ^2 S'^*$.  We can then raise and lower indices with canonical sections $\epsilon^{AB} \in \Gamma(\wedge^2S[-1])$ and $\epsilon^{A'B'} \in \Gamma(\wedge^2 S'[-1])$.  \par
Note that a choice of $\sigma \in \Gamma(\mathcal{O}[1])$ is the same as a choice of conformal metric: 
\begin{align}
g_{AA'BB'} = \sigma^{-2}\epsilon_{AB}\epsilon_{A'B'}.  
\end{align}
The conformal-to-Einstein condition can then be written as the existence of $\sigma \in \Gamma(\mathcal{O}[1])$ satisfying 
\begin{align}\label{cte}
(\nabla_{AA'}\nabla_{BB'}
+ P_{AA'BB'})_\circ\sigma = 0,  
\end{align}
as in \cite{BEG94}.  \par 
All the formulae in this section should be consistent with the conventions used in \cite{PR84v1}, \cite{PR84v2},  \cite{DT14}. 
\begin{defn}[Weyl invariant densities]
\index{Weyl invariants}
Given a holomorphic anti-self-dual conformal structure $(X,[g])$ define $I \in \Gamma(\mathcal{O}[-2])$ and $J \in \Gamma(\mathcal{O}[-3])$ by
\begin{align}
\Psi_{ABCD}\Psi^{ABCE} &= \frac{1}{2}I\delta_{D}{}^{E},  \\
\Psi_{ABCD}\Psi^{AB}{}_{PQ}\Psi^{PQCE} &= \frac{1}{2}J\delta_{D}{}^{E}.  
\end{align}
\end{defn}
Note that in fact $I = \Psi_{ABCD}\Psi^{ABCD}$ and  $J = \Psi_{ABCD}\Psi^{AB}{}_{EF}\Psi^{EFCD}$.  
The non-vanishing of $I$ implies the non-degeneracy of the map $S^* \to \odot^3 S^*[-1]$ defined by $\Psi_{ABC}{}^{D}$.  
The non-vanishing of $J$ is equivalent to the non-degeneracy of the $\mathcal{O}[-2]$-valued endomorphism of $\odot^2 S^*$ defined by $\Psi_{AB}{}^{CD}$.  \par
From the Bianchi identities,  the divergence of the Weyl tensor on an Einstein manifold vanishes.  So if $\hat{\nabla}_{AA'}$ corresponds to an Einstein scale,  then
\begin{align}\label{cspace}
\hat{\nabla}_{AA'}\Psi_{BCD}{}^{A} = 0.  
\end{align}
Recalling the ASD Weyl tensor has weight $-1$,  we use (\ref{change_of_scales1},  \ref{change_of_scales3}),  and conclude the following necessary condition for the existence of an Einstein scale: In an arbitrary scale there exists a closed $1$-form $\Upsilon_{AA'}$ such that 
\begin{align}\label{C_space}
\nabla_{AA'}\Psi_{BCD}{}^{A} + \Upsilon_{AA'}\Psi_{BCD}{}^{A} = 0.
\end{align}
If $\Upsilon_{AA'} = d \log \Omega$,  then in the scale $\hat{\eta}_{A'B'} = \Omega \eta_{A'B'}$,  (\ref{cspace}) will hold.  \par 
\begin{prop}
Let $(X,[g])$ be an anti-self-dual conformal structure such that $J \ne 0$.  If there exists a $1$-form $U_{AA'} \in \Gamma(T^*X)$ such that 
\begin{align}\label{weak_C_space}
\nabla_{AA'}\Psi_{BCD}{}^{A} + U_{AA'}\Psi_{BCD}{}^{A} = 0
\end{align}
then $U_{AA'}$ is closed.   
\end{prop}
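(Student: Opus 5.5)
We differentiate (\ref{weak_C_space}) once more, skew-symmetrise so that only curvature terms and $dU$ survive, and then use the non-degeneracy $J \neq 0$ to conclude $dU=0$. This follows the argument of \cite{KNT85}.

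\textbf{Step 1: decomposing $dU$.} Since $n=1$, the exterior derivative decomposes as
\begin{align*}
\nabla_{A[A'}U_{|B|B']} &= \tfrac12\epsilon_{A'B'}\nabla_{AC'}U_{B}{}^{C'}, &
(dU)_{AA'BB'} &= \epsilon_{AB}\phi_{A'B'} + \epsilon_{A'B'}\chi_{AB},
\end{align*}
with $\chi_{AB} = \nabla_{C'(A}U_{B)}{}^{C'}$ and $\phi_{A'B'} = \nabla_{C(A'}U^{C}{}_{B')}$ symmetric.

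\textbf{Step 2: an identity for $\chi$.} Apply $\nabla_{E}{}^{A'}$ to (\ref{weak_C_space}) and symmetrise over an unprimed pair. The second-derivative term $\nabla_{(E|A'}\nabla_{A)}{}^{A'}\Psi_{BCD}{}^{A}$ is given by the curvature formula (\ref{ASD_and_scalar_curvature}), extended to weighted tensors. On an ASD background there is no self-dual Weyl part, so this term is purely algebraic in $\Psi$, with $\Lambda$ dropping out by trace-freeness and weights. The other terms are $\nabla_{(E}{}^{A'}U_{A)A'}\Psi_{BCD}{}^{A}$, which is $\chi$, and products $U\nabla\Psi$ and $UU\Psi$. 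Substituting (\ref{weak_C_space}) again eliminates $\nabla\Psi$ in favour of $U\Psi$, and the quadratic terms cancel by symmetry, since $U_{(E}{}^{A'}U_{A)A'}=0$. I expect the curvature term either to cancel or to be a fixed combination of $\Psi\Psi$ contractions, such as $\Psi_{EA(BC}{}^{F}\Psi_{D)F}{}^{A}{}$. Its contractions with respect to the remaining structure have to be checked; this bookkeeping is the main obstacle. The plan is to show that they assemble into $\Psi_{(BCD}{}^{F}\Psi_{E)F}{}$-type terms, which vanish after total symmetrisation, or more simply into $\Psi_{BCDE}$-commutator terms that are identically zero for $n=1$ because $\Psi\Psi$ products of this form are symmetric. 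The expected outcome is
\begin{align*}
\chi_{A(E}\Psi_{BCD)}{}^{A} \propto \chi_{AE}\Psi_{BCD}{}^{A}\Big|_{\text{sym}} = 0,
\end{align*}
or more precisely an equation of the form $\chi^{A}{}_{(E}\Psi_{BCD)A}=0$ together with its trace part $\chi^{AF}\Psi_{AFCD}=0$.

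\textbf{Step 3: $\chi=0$.} Use the trace part $\chi^{AF}\Psi_{AF}{}^{CD}=0$. This says $\chi$ lies in the kernel of the endomorphism of $\odot^2S^*$ defined by $\Psi_{AB}{}^{CD}$. That endomorphism is non-degenerate precisely when $J\neq0$, so $\chi=0$.

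\textbf{Step 4: $\phi=0$.} Apply $\nabla_{E}{}^{(A'}$ to (\ref{weak_C_space}) with the primed indices symmetrised. By (\ref{Einstein_curvature_unprimed}), the curvature term gives contributions involving $\Phi_{ABA'B'}\Psi$. The derivative of $U$ gives $\phi_{A'B'}\Psi_{BCDE}$ together with trace-free Ricci terms, and again the $U\nabla\Psi$ terms are removed using (\ref{weak_C_space}). Taking a suitable contraction with $\Psi^{BCDE}$ gives $I\phi_{A'B'}$ plus $\Phi$-terms that cancel by the symmetries of $\Psi$ (a trace of $\Psi$ against $\Phi$ vanishes). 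If this contraction does not produce $I$ directly, I will instead contract twice more with $\Psi$ to obtain $J\phi_{A'B'}=0$. Either way $\phi=0$, so $dU=0$.
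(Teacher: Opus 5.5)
Your Steps 1--3 reproduce the first half of the paper's proof. Once the derivative index is contracted into $\Psi$ (the paper applies $\nabla^{BA'}$), the second-derivative term is pure curvature and vanishes, the $UU\Psi$ term vanishes by symmetry, and $J\neq 0$ gives $\chi_{AB}=0$. Your extra claim $\chi^{A}{}_{(E}\Psi_{BCD)A}=0$ is unfounded, since the totally symmetric component contains $\nabla_{F}{}^{C'}\nabla^{F}{}_{C'}\Psi_{EBCD}$ and non-divergence $U\nabla\Psi$ terms, but you never use it.

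The genuine gap is Step 4, which cannot work as described. Two of its steps are wrong:
\begin{itemize}
\item The term $\nabla_{E}{}^{(A'}\nabla_{A}{}^{B')}\Psi_{BCD}{}^{A}$ is curvature only in its part skew in $E,A$. Its part symmetric in $E,A$ is a genuine second derivative.
\item The term $U_{A}{}^{(A'}\nabla_{E}{}^{B')}\Psi_{BCD}{}^{A}$ is not a divergence, so \eqref{weak_C_space} does not remove it.
\end{itemize}
Handled correctly, your contraction gives no information. Let $\mathcal{D}=\nabla+U$ be the Weyl connection in which \eqref{weak_C_space} reads $\mathcal{D}^{A}{}_{A'}\Psi_{ABCD}=0$. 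Then $\mathcal{D}_{EB'}\Psi_{ABCD}$ is totally symmetric in $EABCD$, and its square contracted on all five unprimed indices is skew in the primed pair. Commuting derivatives then gives
\begin{align*}
\Psi^{EBCD}\mathcal{D}_{E(B'}\mathcal{D}^{A}{}_{A')}\Psi_{ABCD}=-\tfrac12\square_{A'B'}I+\Psi^{EBCD}\square_{A'B'}\Psi_{EBCD},\qquad \square_{A'B'}:=\mathcal{D}_{X(A'}\mathcal{D}^{X}{}_{B')}.
\end{align*}
This is identically zero, because $\square_{A'B'}$ acts as a derivation, so $\square_{A'B'}I=2\Psi^{EBCD}\square_{A'B'}\Psi_{EBCD}$. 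Hence the multiple of $I\phi_{A'B'}$ coming from $\nabla U$ is cancelled exactly by contributions from the two terms you mishandled. Contracting with $\Psi\Psi$ to get $J$ fails the same way. The other components of the derivative (the double divergence, and the parts skew in the primed indices) do not contain $\phi_{A'B'}$ at all. So $\phi_{A'B'}$ is not constrained algebraically by first derivatives of \eqref{weak_C_space}.

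The missing ingredients are the ones the paper uses.
\begin{itemize}
\item The ASD Bach-vanishing identity $\nabla_{C}{}^{A'}\nabla_{E}{}^{C'}\Psi_{AB}{}^{CE}+\Phi_{CE}{}^{A'C'}\Psi_{AB}{}^{CE}=0$ turns the double-divergence component into $\big(U_{(C}{}^{A'}U_{D)}{}^{C'}-\nabla_{(C}{}^{(A'}U_{D)}{}^{C')}+\Phi_{CD}{}^{A'C'}\big)\Psi_{AB}{}^{CD}=0$. With $J\neq0$ this becomes the Einstein--Weyl equation \eqref{EW} for $\mathcal{D}$.
\item Calderbank's result then applies: on an ASD background, the Faraday curvature of an Einstein--Weyl structure is ASD. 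It comes from the Bianchi identity $\mathcal{D}_{A(A'}\phi_{B'C')}=0$. One further derivative, totally symmetrised, gives $\phi_{(A'B'}\phi_{C'D')}=0$, hence $\phi_{A'B'}=0$.
\end{itemize}
Killing the self-dual part therefore needs information beyond first derivatives of \eqref{weak_C_space}, and your proposal never brings it in.
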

\begin{proof}
Differentiate (\ref{weak_C_space}) by $\nabla^{BA'}$ and use the equation again to obtain
\begin{align}
\nabla_{B}{}^{A'}\nabla_{AA'}\Psi_{CD}{}^{AB} = U_{AA'}U_{B}{}^{A'}\Psi_{CD}{}^{AB} -(\nabla_{B}{}^{A'}U_{AA'})\Psi_{CD}{}^{AB}.  
\end{align}
The left-hand side can be rewritten in terms of the curvature formula (\ref{ASD_and_scalar_curvature}).  It vanishes,  which one can see either by direct calculation or observing there are no non-vanishing objects in $\odot^2 S^*[k]$ that are quadratic in $\Psi_{ABCD}$ or formed from a contraction of $\Psi_{ABCD}$ and $\Lambda_{AB}$.  
The first term on the right-hand side vanishes by symmetry considerations.  Therefore
\begin{align}
(\nabla_{B}{}^{A'}U_{AA'})\Psi_{CD}{}^{AB} = 0.  
\end{align}
Assuming $J \ne 0$ this implies $\nabla_{(B}{}^{A'}U_{A)A'} = 0$.  
We now need to handle the self-dual derivative $\phi_{A'B'} := \nabla_{A(A'}U^{A}{}_{B')}$.  The key fact is that an anti-self-dual conformal structure has \textit{Bach tensor}\index{Bach tensor} that vanishes identically \cite{BE90,  BM87}.  That is,  we may utilise an identity:
 \begin{align}
 \nabla_{C}{}^{A'}\nabla_{E}{}^{C'}\Psi_{AB}{}^{CE} + \Phi_{CE}{}^{A'C'}\Psi_{AB}{}^{CE} = 0. 
 \end{align}
 Now differentiate the condition (\ref{weak_C_space}) and use the above Bach-vanishing condition to obtain
 \begin{align}
(U_{(C}{}^{A'}U_{D)}{}^{C'} - \nabla_{(C}{}^{(A'}U_{D)}{}^{C')} + \Phi_{CD}{}^{A'C'})\Psi_{AB}{}^{CD} = 0.  
 \end{align}
When $J \ne 0$ this implies
 \begin{align}\label{EW}
U_{(A}{}^{A'}U_{B)}{}^{B'} - \nabla_{(A}{}^{A'}U_{B)}{}^{B'} + \Phi_{AB}{}^{A'B'} = 0.
 \end{align}
 This is the statement that the connection $\mathcal{D}_{AA'} := \nabla_{AA'} + U_{AA'}$ is Einstein-Weyl.  We now use the result of Calderbank \cite{Calderbank} that on an anti-self-dual manifold,  the exterior derivative of $U_{AA'}$ (the so-called Faraday curvature) is anti-self-dual.  The identity can be checked easily by differentiating the Bianchi identity
 \begin{align}
 \mathcal{D}_{A(A'}\phi_{B'C')} = 0.  
 \end{align}
 Thus $\phi_{A'B'} = 0$ and $U_{AA'}$ is a closed 1-form.  
 \end{proof}
 See also Theorem 4.2 of \cite{NG},  which implies that in Riemannian signature (with no assumptions on $I,J$) an algebraic solution to (\ref{weak_C_space}) in the ASD setting is closed.   \par 
The automatic vanishing of the Bach tensor means that in our ASD setting the existence of a $\Upsilon_{AA'}$ solving (\ref{C_space}) is then sufficient for the existence of an Einstein scale (when $J \ne 0$):
 \begin{prop} \label{C-space equivalent to conformal Einstein generically} 
Let $(X,[g])$ be an anti-self-dual conformal structure with $J \ne 0$.  If there exists a \textup{(}necessarily closed\textup{)} $1$-form $\Upsilon_{AA'} \in \Gamma(T^*X)$ such that \textup{(\ref{C_space})} holds,  then there exists an Einstein metric in the conformal class.  
 \end{prop}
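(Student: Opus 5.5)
The plan is to use the closed $1$-form to pass to a new scale in which the Weyl connection is Einstein–Weyl and exact, so that the trace-free Ricci tensor vanishes there. By the preceding Proposition, $\Upsilon_{AA'}$ is closed. On our contractible neighbourhood we can therefore write $\Upsilon_{AA'} = \nabla_{AA'}\log\Omega$ for some nowhere-vanishing $\Omega$. We then pass to the scale $\hat{\eta}_{AB}=\Omega\eta_{AB}$, $\hat{\eta}_{A'B'}=\Omega\eta_{A'B'}$. By (\ref{change_of_scales1}) and (\ref{change_of_scales3}), recalling that $\Psi_{ABCD}$ has weight $-1$, condition (\ref{C_space}) becomes $\hat\nabla_{AA'}\Psi_{BCD}{}^{A}=0$ in the new scale.

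Next we rerun the second half of the previous proof in the hatted scale, now with $U_{AA'}=0$. Apply $\hat\nabla_{C}{}^{A'}$ to $\hat\nabla_{E}{}^{C'}\Psi_{AB}{}^{CE}=0$ and use the vanishing-Bach identity
\[
\hat\nabla_{C}{}^{A'}\hat\nabla_{E}{}^{C'}\Psi_{AB}{}^{CE} + \hat\Phi_{CE}{}^{A'C'}\Psi_{AB}{}^{CE}=0,
\]
which holds in any scale. This gives $\hat\Phi_{CD}{}^{A'C'}\Psi_{AB}{}^{CD}=0$. Since $J\neq0$, the endomorphism $\Psi_{AB}{}^{CD}$ of $\odot^2S^*$ is nondegenerate, so $\hat\Phi_{ABA'B'}=0$. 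Equivalently, this is (\ref{EW}) with $U_{AA'}=0$.

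It remains to see that $\hat\Phi_{ABA'B'}=0$ is exactly the Einstein condition for $\hat g=\hat\eta_{AB}\hat\eta_{A'B'}$. For $n=1$ the Weyl connections for $\hat g$ are its Levi-Civita connection, $\Phi$ is the trace-free Ricci spinor and $\Lambda_{AB}=\Lambda\epsilon_{AB}$ carries the scalar curvature. So $\hat g$ has trace-free Ricci tensor zero. In dimension four, the contracted Bianchi identity then forces the scalar curvature to be constant, so $\hat g$ is Einstein; we can also check this through the spinor Bianchi identity $\nabla^{A}{}_{A'}\Phi_{ABA'B'}\propto\nabla_{BB'}\Lambda$. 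Equivalently, $\sigma=\Omega^{-1/2}$ (suitably weighted) solves (\ref{cte}).

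The main obstacle is ensuring the Bach identity is applied correctly and the weights are consistent. In particular, we must check that the rescaled Weyl spinor divergence transforms exactly as in (\ref{C_space}), with coefficient $1$ in front of $\Upsilon$ for weight $-1$ together with the unprimed-index correction. Beyond that check, the argument is a direct specialisation of the previous proof with $U=0$.
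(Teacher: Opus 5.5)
Your proposal is correct and is essentially the paper's argument. The paper reuses (\ref{EW}) from the previous proof with $U_{AA'}=\Upsilon_{AA'}$ exact and reads it, via the Schouten transformation law (\ref{schouten_change}), as $\hat\Phi_{ABA'B'}=0$; you instead rescale first and apply the scale-independent Bach identity directly in the hatted scale (i.e.\ (\ref{EW}) with $U=0$), which avoids (\ref{schouten_change}) but is the same computation. One harmless slip in your side remark: since $\hat g=\Omega^2 g$, the density solving (\ref{cte}) is $\Omega^{-1}\sigma$, not $\Omega^{-1/2}$.
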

\begin{proof}
We return to (\ref{EW}) with $U_{AA'} = \Upsilon_{AA'}$ exact.  
Letting $\Omega$ be the function such that $\Upsilon_{AA'} = d \log \Omega$ we have,  for the Weyl connections corresponding to $\hat{\eta}_{AB} = \Omega \eta_{AB}$,  $\hat{\eta}_{A'B'} = \Omega \eta_{A'B'}$
\begin{align}
\hat{\Phi}_{ABA'B'} = 0,
\end{align}
which is the Einstein condition.  
\end{proof}
%In the real,  Lorentzian setting,  vanishing of the Bach tensor is an additional condition whereas it is automatic in the anti-self-dual case.  \par 
If $I \ne 0$,  if there exists a not necessarily closed $U_{AA'}$ solving (\ref{weak_C_space}),  it is unique,  and furthermore,  there is a formula (irrespective of whether $J$ vanishes): Contracting both sides of (\ref{weak_C_space}) with $\Psi^{BCD}{}_{E}$ gives 
\begin{align}\label{BE_solve}
\Psi^{BCD}{}_{A}\nabla_{EA'}\Psi_{BCD}{}^{E} = \frac{I}{2}U_{AA'},  
\end{align}
and this yields a formula for $U_{AA'}$.  \par 
Substituting this into (\ref{weak_C_space}) therefore yields necessary and sufficient conditions that (\ref{weak_C_space}) has an algebraic solution.  This was the necessary condition for an Einstein scale identified in \cite{BE90} which we have here shown is sufficient:
\begin{thm}\label{generic_condition}
Suppose that $(X,[g])$ is a \textup{(}holomorphic\textup{)} anti-self-dual conformal structure with $I \ne 0$ and $J \ne 0$.  The conformal class contains an Einstein metric if and only if 
\begin{align}\frac{I}{2}\nabla^{E}{}_{A'}\Psi_{ABCE} - \Psi^{FGH}{}_{D}(\nabla_{EA'}\Psi_{FGH}{}^{E})\Psi_{ABC}{}^{D} &= 0,   \label{BEinvariant} 
\end{align}
\end{thm}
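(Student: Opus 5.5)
The plan is to show that (\ref{BEinvariant}) is exactly the condition that (\ref{weak_C_space}) admits an algebraic solution $U_{AA'}$. Once that is established, the theorem follows from the proposition stating that such a $U_{AA'}$ is closed when $J\ne 0$, together with Proposition \ref{C-space equivalent to conformal Einstein generically}.

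For necessity, suppose an Einstein scale exists. The Bianchi identity gives (\ref{cspace}) in that scale. Transforming back to an arbitrary scale using (\ref{change_of_scales1}) and (\ref{change_of_scales3}), with $\Psi$ of weight $-1$, produces a closed $\Upsilon_{AA'}$ solving (\ref{C_space}). I will carry out this weight bookkeeping explicitly once: it should give exactly $\nabla_{AA'}\Psi_{BCD}{}^{A}+\Upsilon_{AA'}\Psi_{BCD}{}^{A}$ up to the sign convention for $\Upsilon$, which can be absorbed by taking $\Omega\mapsto\Omega^{-1}$. Contracting (\ref{C_space}) with $\Psi^{BCD}{}_{E}$ and using the defining identity $\Psi_{ABCD}\Psi^{ABCE}=\tfrac12 I\delta_D{}^E$ gives (\ref{BE_solve}). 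Since $I\neq0$, this determines $U_{AA'}=\tfrac{2}{I}\Psi^{BCD}{}_{A}\nabla_{EA'}\Psi_{BCD}{}^{E}$. Substituting back into (\ref{C_space}), multiplying by $I/2$, and lowering indices with $\epsilon$ (index positions need care: $\nabla_{EA'}\Psi_{ABC}{}^E=-\nabla^E{}_{A'}\Psi_{ABCE}$ up to the spinor sign convention) yields (\ref{BEinvariant}).

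For sufficiency, assume (\ref{BEinvariant}) holds and define $U_{AA'}$ by the formula above. Dividing (\ref{BEinvariant}) by $I/2$ shows that $U_{AA'}$ satisfies (\ref{weak_C_space}) identically. The earlier proposition then applies, since $J\ne0$, and shows that $U_{AA'}$ is closed. On our contractible neighbourhood it is therefore exact, $U=d\log\Omega$. Proposition \ref{C-space equivalent to conformal Einstein generically} then produces the Einstein metric.

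The argument is essentially a repackaging of results already proved, so the only real obstacle is the sign and index bookkeeping. Three points need checking: that (\ref{BEinvariant}) matches (\ref{weak_C_space}) after substitution with consistent signs from raising and lowering with $\epsilon_{AB}$; that the weights of $I$, $\Psi$ and $U$ are compatible, so that $U$ is a genuine unweighted $1$-form; and that the trace identity for $I$ is used with the correct index placement. I would verify the sign by contracting the substituted equation with $\Psi^{ABC}{}_{F}$ and checking that it becomes a tautology.
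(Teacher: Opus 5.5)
Your proposal is correct and follows the paper's own argument. Necessity comes from the vanishing divergence of $\Psi$ in an Einstein scale, rescaled via (\ref{change_of_scales1}) and (\ref{change_of_scales3}). For sufficiency, (\ref{BEinvariant}) is exactly the condition that (\ref{weak_C_space}) has an algebraic solution, which $I\neq0$ forces to be the $U_{AA'}$ of (\ref{BE_solve}); the closedness proposition (using $J\neq0$) and Proposition \ref{C-space equivalent to conformal Einstein generically} then finish. Your sign bookkeeping also checks out: in Penrose--Rindler conventions $\Psi^{BCD}{}_{E}\Psi_{BCD}{}^{A}=-\tfrac{I}{2}\delta_E{}^A$ and $\nabla^{E}{}_{A'}\Psi_{ABCE}=-\nabla_{EA'}\Psi_{ABC}{}^{E}$, so (\ref{BEinvariant}) is precisely $-\tfrac{I}{2}$ times (\ref{weak_C_space}) with that $U_{AA'}$ substituted.
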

The obstruction (\ref{BEinvariant}) is a section of the bundle $S'^* \otimes \odot^3 S^*$ so defines,  after choosing a basis,  eight scalar equations.  However,  only four are independent since by construction  (\ref{BEinvariant}) lies in the kernel of the map $S'^* \otimes \odot^3 S^* \to S'^* \otimes S$ defined by $\Psi^{ABC}{}_{D}$.  \par 
For completeness,  if we remove the hypothesis $J \ne 0$,  then necessary and sufficient conditions are obtained,  for example,  in \cite{GN06}.  They are equivalent to the vanishing of a section of $\End_\circ(TX)$:
\begin{thm}\label{degen_condition}
Suppose that $(X,[g])$ is a \textup{(}holomorphic\textup{)} anti-self-dual conformal structure with $I \ne 0$.  The conformal class contains an Einstein metric if and only if
\begin{align}\label{Einstein rescale}
 U_{(A}{}^{A'}U_{B)}{}^{B'} - \nabla_{(A}{}^{(A'}U_{B)}{}^{B')} + \Phi_{AB}{}^{A'B'} &= 0 \\
 dU &= 0. 
\end{align}
where $U_{AA'}$ is defined by \textup{(\ref{BE_solve})}.  
\end{thm}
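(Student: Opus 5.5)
Both directions reduce to earlier material once $U_{AA'}$ is defined by (\ref{BE_solve}), which is possible because $I\ne 0$.

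\emph{Necessity.} Suppose $\hat\eta = \Omega\eta$ is an Einstein scale. By (\ref{cspace}) and the change-of-scale formulae, $\Upsilon_{AA'} = \nabla_{AA'}\log\Omega$ solves (\ref{C_space}). Contracting (\ref{C_space}) with $\Psi^{BCD}{}_E$ as in (\ref{BE_solve}) gives $\frac{I}{2}\Upsilon_{AA'} = \frac{I}{2}U_{AA'}$. Since $I\ne0$, this forces $U=\Upsilon$, so $dU=0$.

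For the Einstein-Weyl equation, use (\ref{schouten_change}) with $\hat\Phi_{ABA'B'}=0$. The part of $\hat P_{AA'BB'}$ lying in $\odot^2S^*\otimes\odot^2S'^*$ is $\hat\Phi_{ABA'B'}$. Taking that component of $\hat P = P - \nabla\Upsilon + \Upsilon\Upsilon$ gives
\begin{align*}
0=\Phi_{ABA'B'} - \nabla_{(A|(A'}\Upsilon_{|B)B')} + \Upsilon_{(A|(A'}\Upsilon_{|B)B')}.
\end{align*}
Because $\Upsilon$ is closed, $\nabla_{A(A'}\Upsilon_{|B|B')}$ is symmetric in $AB$, so this equation is exactly the first condition of the theorem with $U=\Upsilon$. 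I would take care at this point to match the index symmetrisations in the statement. The term $U_{(A}{}^{A'}U_{B)}{}^{B'}$ is automatically symmetric in the primed indices once symmetrised in $AB$, because swapping both pairs is a symmetry of the product.

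\emph{Sufficiency.} Assume both equations hold for the $U$ of (\ref{BE_solve}). Since $X$ is locally contractible and $dU=0$, write $U = d\log\Omega$. Rescale by $\Omega$ using Proposition \ref{restricted_weyl}. Reading (\ref{schouten_change}) in reverse, the $\odot^2S^*\otimes\odot^2S'^*$ component of $\hat P$ is the left-hand side of the first equation, so $\hat\Phi_{ABA'B'}=0$. This is the Einstein condition in the scale $\hat g$, exactly as in the proof of Proposition \ref{C-space equivalent to conformal Einstein generically}.

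No use of $J$ or of the Bach identity is needed, because closedness is now imposed as a hypothesis rather than derived. I would also remark that (\ref{weak_C_space}) need not be assumed separately. If it were wanted, it would follow from the Bianchi identity in the Einstein scale via (\ref{cspace}).

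The only real obstacle is bookkeeping. I need to check the sign conventions in (\ref{schouten_change}), and confirm that the trace-free symmetric projection of $\nabla_{AA'}\Upsilon_{BB'}$ reduces to $\nabla_{(A}{}^{(A'}\Upsilon_{B)}{}^{B')}$. It is also worth confirming that the $\epsilon_{AB}\epsilon_{A'B'}$ trace part, which transforms $\Lambda$, imposes no extra condition: in four dimensions the Einstein condition is only $\Phi=0$.
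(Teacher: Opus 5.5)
Your proposal is correct and is essentially the argument the paper relies on. The paper states this theorem with a pointer to \cite{GN06}, but its content is exactly your two observations. First, for $I\neq 0$ the exact $1$-form $\Upsilon_{AA'}=\nabla_{AA'}\log\Omega$ produced by an Einstein scale via (\ref{C_space}) must coincide with the $U_{AA'}$ of (\ref{BE_solve}). Second, by (\ref{schouten_change}), the first condition of the theorem for a closed (hence locally exact) $U_{AA'}$ says precisely that $\hat{\Phi}_{ABA'B'}=0$ in the rescaled scale, exactly as in the proof of Proposition \ref{C-space equivalent to conformal Einstein generically}; since closedness is now a hypothesis, neither $J\neq 0$ nor the Bach identity is needed.
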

Note that the conditions in Theorem \ref{degen_condition} require calculating the fourth derivatives of the metric components,  one order higher than required for those in Theorem \ref{generic_condition}.  In the non-ASD case,  one needs to check the vanishing of the Bach tensor,  which is also fourth order in the metric. 
\begin{exmp}[Araneda's ASD Pleba\'nski-Demia\'nski metrics]
The Pleba\'nski-Demia\'nski \textup{\cite{PD76}} family of metrics is given by,  in coordinates $(\tau,\phi,p,q)$
\begin{align}\label{bernardo}
g = \frac{1}{(p-q)^2}\Big[ -Q \frac{(d\tau - p^2 d\phi)^2}{(1-p^2q^2)} + P\frac{(d\phi - q^2 d\tau)^2}{(1-p^2q^2)} + (1-p^2q^2)\bigg( \frac{dp^2}{P} - \frac{dq^2}{Q}\bigg) \Big],  
\end{align}
for arbitrary $P(p),  Q(q)$.  
Araneda \textup{\cite{A25}} showed that the metric is anti-self-dual\,\footnote{Here we adopt the opposite orientation from \cite{A25}} if and only if
\begin{align}
P = a_0 + a_1p + a_2p^2 + a_3p^3 + a_4p^4 \\
Q = a_4 + a_3q + a_2q^2 + a_1q^3 + a_0q^4
\end{align}
for constants $a_0,a_1,a_2,a_3,a_4 \in \mathbb{C}$.  This family of metrics provides a prototype \textup{\cite{AAS26}} for recent advances in twistorial descriptions of some black hole spacetimes \textup{\cite{AASS26}}.  
Computing with the Maple \texttt{DifferentialGeometry} package we obtain:
\begin{align}
I = \frac{6(a_1q - a_3p)^2(p-q)^4}{(qp+1)^6}
\end{align}
and $J$ is determined by the fact the metric is Petrov type D.  
The obstruction \textup{(\ref{BEinvariant})} vanishes and the relevant $1$-form is
\begin{align}
U = \frac{2q(a_1-a_3)dp}{(a_1q-a_3p)(p-q)} - \frac{2p(a_1-a_3)dq}{(a_1q-a_3p)(p-q)}.
\end{align}
Hence, 
\begin{align}
\tilde{g} = \frac{(p-q)^2}{(a_1q - a_3p)^2}g
\end{align}
is Einstein.  \par 
We could have concluded this by other means: Proposition \textup{7} of \textup{\cite{ACG}} implies that any four-dimensional ambi-K\"ahler metric \textup{(}a metric that is conformal to a pair of K\"ahler metrics,  oppositely oriented from one another\textup{)} with vanishing Bach tensor is conformal to an Einstein metric.  The ambi-K\"ahler structure is shown in \textup{\cite{A25}}.  
\end{exmp}
Not all algebraic solutions of (\ref{weak_C_space}) are closed.  The following example demonstrates the Riemannian assumption in Theorem 4.2 of \cite{NG} is essential:
\begin{exmp}[Nil Einstein-Weyl monopole]
There is a well-known correspondence between three-dimensional Einstein-Weyl spaces $(h,\omega)$ equipped with solutions to the monopole equation and ASD conformal structures with a non-null conformal Killing vector,  established in \textup{\cite{JT}}.  If the Einstein-Weyl space is taken to be hyperbolic space $\mathbb{H}^3$,  then Proposition \textup{3.1} in \textup{\cite{PT}} is that the resulting ASD conformal structure is conformally Einstein if and only if the solution to the monopole equation is spherically symmetric. 
Another example of an Einstein-Weyl space exists on the Lorentzian Nil space.  
The Einstein-Weyl metric and $1$-form are given by
\begin{align}
h = (dy + xdt)^2 - 4dxdt,  \quad \omega = dy + xdt.  
\end{align}
It belongs to a class that arises from the reduction of the Einstein-Weyl equations to the $\text{Diff}(S^1)$ equation \textup{\cite{D04}}.  
We consider the monopole $(F,\beta)$ where $F$ is a function and $\beta$ is a $1$-form satisfying
\begin{align}
* (dF + \frac{1}{2}\omega F) = d\beta.  
\end{align}
A straightforward family of solutions is 
\begin{align}
F(t,x,y) &= V(x) + W(t) \\
 \beta &= -(yW'(t) + xV(x) + xW(t))dt + yV'(x)dx.  
\end{align}
The corresponding ASD metric with symmetry $\partial_\tau$ is given by
\begin{align}
g = F^2\big((dy + xdt)^2 - 4dxdt\big) - (d\tau  + \beta )^2.  
\end{align}
If we set $F = W(t) + c$ then $I = J = 0$ but the metric is not Petrov type N.  There is a unique algebraic solution to \textup{(\ref{weak_C_space})} which is:
\begin{align}
U = \Bigg(
\frac{W''}{3W'}
+\frac{(y-6)W'}{4(c+W)}
\Bigg) dt
-\frac14\, dy
-\frac{1}{4(c+W)}\, d\tau 
\end{align}
This is not closed unless $W$ is a constant,  in which case $g$ is conformally flat.  
\end{exmp}
Necessary and sufficient conditions for the existence of a Ricci-flat conformal scale \index{Ricci-flat!metric} are:
\begin{thm}\label{ricci_flat_condition}
Suppose that $(X,[g])$ is a \textup{(}holomorphic\textup{)} anti-self-dual conformal structure with $I \ne 0$.  The conformal class contains a Ricci-flat metric if and only if it contains an Einstein metric and 
\begin{align}\label{Scalar_kill}
4\Lambda + \nabla_{AA'}U^{AA'} + U_{AA'}U^{AA'} = 0,  
\end{align}
where $U_{AA'}$ is defined by \textup{(\ref{BE_solve})} and $2\Lambda := \Lambda_{AB}\epsilon^{AB}$.  
\end{thm}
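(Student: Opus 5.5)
Under a change of scale, the scalar curvature part transforms by the trace of (\ref{schouten_change}). Contracting $P_{AA'BB'} = \Phi_{ABA'B'} - \Lambda_{AB}\epsilon_{A'B'}$ with $\epsilon^{AB}\epsilon^{A'B'}$ gives a multiple of $\Lambda$: indeed $P_{AA'}{}^{AA'} = -\Lambda_{AB}\epsilon^{AB}\epsilon_{A'}{}^{A'}$, so up to the sign conventions of \cite{PR84v1} this is $\pm 4\Lambda$. Taking the full trace of (\ref{schouten_change}) and using $\Upsilon_{AB'}\Upsilon^{A}{}_{A'}\epsilon^{A'B'} \propto \Upsilon_{AA'}\Upsilon^{AA'}$ gives
\begin{align*}
\hat{\Lambda} = \Lambda + c_1\nabla_{AA'}\Upsilon^{AA'} + c_2\Upsilon_{AA'}\Upsilon^{AA'}
\end{align*}
for numerical constants $c_1, c_2$. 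These need to be fixed carefully; the claim is that they combine to $4\hat\Lambda = 4\Lambda + \nabla_{AA'}\Upsilon^{AA'} + \Upsilon_{AA'}\Upsilon^{AA'}$, with the appropriate weight so that $\hat\Lambda$ is measured in the new scale. Getting the factors and signs to agree with (\ref{Scalar_kill}) is the one step needing care, and I would check it against the $n=1$ formulae in \cite{DT14}.

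For the ``if'' direction, assume an Einstein metric exists and (\ref{Scalar_kill}) holds. By the argument preceding Theorem \ref{generic_condition}, $I\neq 0$ forces any solution of (\ref{weak_C_space}) to be unique and given by (\ref{BE_solve}). In an Einstein scale $\hat\nabla$ we have $\hat\nabla_{AA'}\Psi_{BCD}{}^A=0$ by (\ref{cspace}), so $U_{AA'}=\Upsilon_{AA'}=d\log\Omega$ is exact and $\Omega$ is the Einstein rescaling. The transformation law then shows $\hat\Lambda$ is a nonzero multiple of the left-hand side of (\ref{Scalar_kill}). Since the hypothesis makes this vanish, $\hat\Lambda=0$; together with $\hat\Phi=0$ this means the metric is Ricci-flat.

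For the ``only if'' direction, a Ricci-flat metric is in particular Einstein, so the Einstein condition holds. Its scale satisfies (\ref{cspace}), so by uniqueness its $\Upsilon$ coincides with the $U$ of (\ref{BE_solve}). Since $\hat\Lambda=0$ in that scale, the trace formula yields (\ref{Scalar_kill}).

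One further point to verify is that every Einstein scale has the same $\Upsilon$, so that (\ref{Scalar_kill}) does not depend on which Einstein metric is chosen. This follows from the uniqueness of $U$ when $I\neq0$: two Einstein scales then differ by a constant factor, and the conditions are consistent.
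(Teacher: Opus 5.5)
Your argument is correct and is exactly the reasoning behind the paper's statement, which the paper records without a separate proof. With $I\neq 0$, any Einstein scale has $\Upsilon_{AA'}=d\log\Omega$ equal to the $U_{AA'}$ of (\ref{BE_solve}), and the trace of (\ref{schouten_change}) then computes $\hat\Lambda$. The constants you left open come out exactly as you claimed: contracting with $\epsilon^{AB}\epsilon^{A'B'}$ sends $P_{AA'BB'}\mapsto -4\Lambda$, $\nabla_{AA'}\Upsilon_{BB'}\mapsto \nabla_{AA'}\Upsilon^{AA'}$ and $\Upsilon_{AB'}\Upsilon_{BA'}\mapsto -\Upsilon_{AA'}\Upsilon^{AA'}$, so $4\hat\Lambda = 4\Lambda+\nabla_{AA'}\Upsilon^{AA'}+\Upsilon_{AA'}\Upsilon^{AA'}$ with no further factors.
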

These are fourth order in the metric components.  \par 
In \cite{DT14}, necessary and sufficient conditions for the existence of a Ricci-flat metric in a Riemannian conformal class are given.  
If $J \ne 0$,  Theorem \ref{generic_condition} in this article implies that all but the trace of the tensor invariant (1.4) of Theorem 1.1 in  \cite{DT14}  vanish automatically given vanishing of the scalar invariant (1.3).  That scalar invariant vanishes with the square of the norm of the invariant (\ref{BEinvariant}) here.  
 \par 
\begin{rem}
We briefly mention some other perspectives on ASD-Einstein metrics: There is a correspondence \textup{\cite{W80}} between holomorphic contact structures on the twistor space,  transverse to twistor lines and holomorphic Einstein metrics with non-zero cosmological constant \textup{(}up to scale\textup{)}.  
The existence of such an Einstein scale may also be equivalently stated in terms of the existence of an adapted coordinate system and scalar potential satisfying a second-order non-linear PDE \textup{\cite{PrzanKill,CP13,DM26}}.  The antecedents in the Ricci-flat case are the original non-linear graviton construction \textup{\cite{P76}} and the formalism of Pleba\'nski \textup{\cite{Pleb}}.  
\end{rem}
\section{ASD-K\"ahler four-manifolds}\label{ASDK}
The ASD-K\"ahler metrics we will consider are ones for which the K\"ahler form defines a self-dual $2$-form.  Otherwise,  the behaviour of these structures is different: the ASD Weyl curvature is restricted in the oppositely oriented case whereas it can be algebraically general for the metrics we consider here.  \par 
A conformally K\"ahler structure corresponds to a section $\omega_{A'B'} \in \Gamma(\odot^2S'^*[2])$ (which determines a self-dual $2$-form in a scale) satisfying $\omega_{A'B'}\omega^{A'B'} \ne 0$ and the conformally invariant overdetermined PDE 
\begin{align}\label{Kahler PDE}
\nabla_{A(A'}\omega_{B'C')} = 0.    
\end{align}
This is precisely the integrability condition for the existence of a scale $\hat{\sigma} \in \Gamma(\mathcal{O}[1])$ such that $\hat{\nabla}_{AA'}\omega_{B'C'} = 0$,  and in this scale $\hat{\sigma}^{-1}\delta_{A}{}^{B}\omega_{A'}{}^{B'} \in \Gamma(\operatorname{End}(TX))$ may be normalised to define a compatible complex structure.  \par 
The curvature formula (\ref{scalar_curvature}) shows that an ASD-K\"ahler metric is scalar-flat,  and in fact all scalar-flat K\"ahler metrics are ASD-K\"ahler.  Another integrability condition in the K\"ahler scale is that
\begin{align}\label{Ricci_spinor}
\Phi_{ABA'B'} = \phi_{AB}\omega_{A'B'}
\end{align}
for $\phi_{AB} \in \Gamma(\odot^2 S^*[-2])$,  which we call the Ricci spinor.  \par 
The question of which ASD-Einstein metrics with non-zero cosmological constant were conformal to ASD-K\"ahler metrics was answered in \cite{T06,  DT10}.  Here,  we solve the reverse problem and give necessary and sufficient conditions for an ASD-K\"ahler structure to be conformally Einstein.  \par 
First,  we prove the result of \cite{DT10} (extended to the indefinite setting):
\begin{thm}[\cite{DT10}]
An ASD-Einstein metric with non-zero cosmological constant is conformal to an  ASD-K\"ahler metric on an open dense set if and only if it has a Killing vector that is not everywhere null.  
\end{thm}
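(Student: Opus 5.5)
We work in the Einstein scale, where $\Phi_{ABA'B'}=0$, $\Lambda_{AB}=\Lambda\epsilon_{AB}$ with $\Lambda\ne 0$ constant, and raise and lower indices with the metric.

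\emph{Forward direction.} Suppose $\omega_{A'B'}\in\Gamma(\odot^2S'^*[2])$ solves (\ref{Kahler PDE}) with $\omega_{A'B'}\omega^{A'B'}\ne 0$. Decomposing $\nabla_{AA'}\omega_{B'C'}$ into irreducible parts, (\ref{Kahler PDE}) says it is pure trace:
\begin{align}
\nabla_{AA'}\omega_{B'C'}=\epsilon_{A'(B'}K_{|A|C')}
\end{align}
for some $K_{AC'}$. We differentiate once more, skew over $A',B'$ on the derivative pair, and use the curvature formula (\ref{scalar_curvature}) with $\Phi=0$. This should give $\nabla_{A(A'}K_{|B|B')}\propto\Lambda\,\omega_{A'B'}\epsilon_{AB}$ and $\nabla_{(A|A'|}K_{B)}{}^{A'}=0$. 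Hence $K$ is a Killing vector: its unprimed-symmetric derivative vanishes and its trace part vanishes, and the latter can be checked by taking the divergence of the first equation. Moreover its self-dual derivative is a nonzero multiple of $\Lambda\omega_{A'B'}$. Because $\Lambda\ne0$ and $\omega$ is non-degenerate, $K$ is not identically zero. To see that $K$ is not everywhere null, note that $K$ null on an open set would make $\nabla K$ degenerate there in a way that conflicts with its self-dual part being the non-degenerate $\omega$. I expect this to follow from the Killing prolongation identity $\nabla_{AA'}\phi_{B'C'}\propto\Lambda\epsilon_{A'(B'}K_{|A|C')}$, applied after contracting with $K$.

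\emph{Converse direction.} Take a Killing vector $K_{AA'}$ and write
\begin{align}
\nabla_{AA'}K_{BB'}=\epsilon_{AB}\phi_{A'B'}+\epsilon_{A'B'}\psi_{AB}.
\end{align}
The Killing prolongation with $\Phi=0$ gives $\nabla_{AA'}\phi_{B'C'}=c\Lambda\,\epsilon_{A'(B'}K_{|A|C')}$ for a universal constant $c$; this is the standard Kostant identity restricted to the self-dual part, obtained from (\ref{scalar_curvature}). Consequently $\phi_{A'B'}$, viewed as a density of weight $2$ in an appropriate scale, satisfies (\ref{Kahler PDE}), because the right-hand side is pure trace in the primed indices. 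A conformally Kähler structure then follows wherever $\phi_{A'B'}\phi^{A'B'}\ne0$, and the Kähler scale is $\hat\sigma\propto(\phi_{A'B'}\phi^{A'B'})^{1/4}$. This scale is ASD-Kähler because (\ref{scalar_curvature}) forces scalar-flatness, as noted above.

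\emph{Main obstacle.} The hard step is showing that $\phi_{A'B'}\phi^{A'B'}$ is nonzero on an open dense set exactly when $K$ is not everywhere null. The plan is to differentiate the quantity
\begin{align}
Q:=\phi_{A'B'}\phi^{A'B'}+\mu\Lambda K_{AA'}K^{AA'},
\end{align}
using the prolongation above and $\nabla_{AA'}K^2=2K^{BB'}\nabla_{AA'}K_{BB'}$, and choose $\mu$ so that $Q$ is constant. With the ASD condition making $\psi$ drop out of the relevant contraction, $\nabla Q=0$ should hold for the right $\mu$, and then there are two cases.
\begin{itemize}
\item If $Q\ne0$, then $\phi^2=0$ on an open set forces $K^2$ to be a nonzero constant there. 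This must be compatible with non-nullness, so an additional argument is needed; I would use real-analyticity to reach open density.
\item If $Q=0$, then $\phi^2=-\mu\Lambda K^2$, so $\phi$ is non-degenerate precisely where $K$ is non-null.
\end{itemize}
I expect the case $Q\ne0$ to require care. If $\phi$ vanishes on an open set, the prolongation gives $K=0$ there, and hence $K\equiv0$ by analyticity. The remaining situation, where $\phi\ne0$ but $\phi^2=0$ on an open set, I would rule out by differentiating $\phi^2=0$ and using $\Lambda\ne0$, which should force $K$ to be null there. For the forward direction, $Q=0$ gives non-nullness of $K$ directly, since $\omega^2\ne0$ and $\Lambda\ne 0$.
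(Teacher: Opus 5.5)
\textbf{Verdict.} Your converse is essentially sound, but the forward direction's non-nullness step, which is its only non-formal step, rests on a conservation law that is false.

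\textbf{The converse.} It works once you drop $Q$. The primed prolongation gives (\ref{Kahler PDE}). If $\phi\equiv0$ on an open set, then $\Lambda K=0$. If $\phi_{A'B'}=\alpha_{A'}\alpha_{B'}$ on an open set, differentiating $\phi_{A'B'}\phi^{A'B'}=0$ gives $\alpha^{B'}K_{CB'}=0$, so $K$ is null there and hence everywhere by analyticity. This is the paper's contraction of (\ref{ctk}) with the degenerate factor.

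\textbf{Why $Q$ is not constant.} You rest the forward direction's non-nullness on $Q$ being constant, and it is not. $K^{AA'}\nabla_{CC'}K_{AA'}$ contains, besides a $\phi K$ term, the term $\psi_{AC}K^{A}{}_{C'}$ coming from the anti-self-dual part of $\nabla K$. By contrast, $\nabla_{CC'}(\phi_{A'B'}\phi^{A'B'})$ contains only $\phi K$ terms. So for the $\mu$ that cancels the $\phi K$ terms, $\nabla_{CC'}Q$ is a non-zero multiple of $\Lambda\psi_{AC}K^{A}{}_{C'}$. Anti-self-duality removes the self-dual Weyl spinor from the prolongation of $\phi$; it does nothing to make $\psi$ drop out.

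Nor can $\psi$ vanish on an open set. Its prolongation is $\nabla_{CC'}\psi_{AB}=\Psi_{ABCD}K^{D}{}_{C'}+c\Lambda\epsilon_{C(A}K_{B)C'}$ with $c\neq0$, and tracing over $C,A$ would give $\Lambda K=0$. Since $K^{A}{}_{C'}$ is invertible wherever $K$ is non-null, $\nabla Q=0$ on an open set forces $K$ to be null there. So $Q$ is locally constant exactly in the case you are trying to exclude. Adding $\psi_{AB}\psi^{AB}$ to $Q$ does not help, because $\nabla\psi$ brings in $\Psi_{ABCD}$. Even granting constancy, your forward argument only treats $Q=0$; a null $K$ with $Q\neq0$ merely gives $\phi_{A'B'}\phi^{A'B'}=Q$.

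\textbf{A false intermediate claim.} The same computation shows that your claim $\nabla_{(A|A'|}K_{B)}{}^{A'}=0$ is false. The second-derivative computation constrains only the self-dual part, the trace, and the part symmetric in both index pairs of $\nabla K$. That is all the Killing property needs. The part $\rho_{AB}$ in (\ref{Killing_equation_Kahler}) is left free, and by the $\psi$ argument above it is non-zero.

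\textbf{The missing idea: a second-order identity.} Suppose $K$ is null on an open set. Then $K^{BB'}\nabla_{AA'}K_{BB'}=\frac{1}{2}\nabla_{AA'}(K_{BB'}K^{BB'})=0$, so the 2-form $\nabla_{AA'}K_{BB'}=-\Lambda\epsilon_{AB}\omega_{A'B'}+\epsilon_{A'B'}\rho_{AB}$ is degenerate, i.e.\ $\Lambda^{2}\omega_{A'B'}\omega^{A'B'}=\rho_{AB}\rho^{AB}$. This is the degeneracy you allude to, but on its own it is no contradiction, since $\rho_{AB}$ may be non-degenerate. The paper pairs it with the Bochner identity
\[
0=\frac{1}{2}\Box(K_{BB'}K^{BB'})=(\nabla_{AA'}K_{BB'})(\nabla^{AA'}K^{BB'})+K^{BB'}\Box K_{BB'}.
\]
Commuting derivatives shows $\Box K_{AA'}$ is a multiple of $\Lambda K_{AA'}$, so the last term vanishes for null $K$. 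This gives $\Lambda^{2}\omega_{A'B'}\omega^{A'B'}+\rho_{AB}\rho^{AB}=0$. Adding the two relations yields $\omega_{A'B'}\omega^{A'B'}=0$, contradicting the K\"ahler non-degeneracy.

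Your suggestion of contracting the first-order identity $\nabla\phi\propto\Lambda\epsilon K$ with $K$ cannot replace this. It only relates derivatives of $\phi$ to expressions quadratic in $K$. It never produces the second algebraic relation between $\phi_{A'B'}\phi^{A'B'}$ and $\psi_{AB}\psi^{AB}$ that the argument needs.
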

\proof 
The PDE (\ref{Kahler PDE}) is equivalent to the existence of $U_{AA'} \in \Gamma(T^*X[1])$ satisfying 
\begin{align}\label{ctk}
\nabla_{AA'}\omega_{B'C'} = \epsilon_{A'B'}U_{C'A} + \epsilon_{A'C'}U_{B'A}.  
\end{align}
We may rule out $U_{AA'} = 0$,  since if it were,  the metric would already be K\"ahler and so Ricci-flat.   \par 
Using the Einstein condition,  differentiating and commuting derivatives yields:
\begin{align}\label{Killing_equation_Kahler}
\nabla_{AA'}U_{BB'} = -\Lambda\epsilon_{AB}\omega_{A'B'} + \epsilon_{A'B'}\rho_{AB} 
\end{align}
for some $\rho_{AB} \in \Gamma(\odot^2 S^*).$  Letting $\sigma \in \Gamma(\mathcal{O}[1])$ be the parallel section in the Einstein scale,  the above implies $K_{AA'} := \sigma^{-1} U_{AA'}$ defines (the dual of) a Killing vector.   Suppose $K^{AA'}$ is everywhere null.  Write $\Box := \nabla_{AA'}\nabla^{AA'}$.  We have
\begin{align}\nonumber 
0 = \frac{1}{2}\Box (U_{BB'}U^{BB'})
&=
(\nabla_{AA'}U_{BB'})
(\nabla^{AA'}U^{BB'})
+
U^{BB'}\Box U_{BB'}
\\
&=
(\nabla_{AA'}U_{BB'})
(\nabla^{AA'}U^{BB'}),  
\end{align}
where we have used $\Box U_{AA'}-6\Lambda\epsilon_{AB}\epsilon_{A'B'}U^{BB'} = 0$,  which may be derived by taking curvature on $U_{AA'}$.  This implies $\Lambda^2\omega_{A'B'}\omega^{A'B'} + \rho_{AB}\rho^{AB} = 0$.  On the other hand nullity implies the left-hand side of (\ref{Killing_equation_Kahler}) vanishes upon contraction with $U^{BB'}$. This implies (\ref{Killing_equation_Kahler}) defines a degenerate $2$-form and hence $\Lambda^2\omega_{A'B'}\omega^{A'B'} - \rho_{AB}\rho^{AB} = 0$.  Together we have $\omega_{A'B'}\omega^{A'B'} = 0$ which contradicts the desired K\"ahler condition.  So $K^{AA'}$ is not everywhere null.  \par
\par Conversely,  let us consider the Killing equation in an Einstein scale.  If there exists $U_{AA'} \in \Gamma(T^*X[1])$ defining a Killing vector then we first claim its self-dual derivative is non-zero.  If 
$\nabla_{AA'}U_{BB'} = \epsilon_{A'B'}\rho_{AB} $,  
then differentiating and taking curvature implies $\Lambda = 0$,  contradicting our assumption.   Therefore,  we may take $U_{AA'}$ to satisfy (\ref{Killing_equation_Kahler}) for some $\rho_{AB}  \in \Gamma(\odot^2 S^*)$ and non-zero $\omega_{A'B'} \in \Gamma(\odot^2S'^*[2])$.  Next,  
\begin{align}
\nabla_{B(A'}\nabla^{B}{}_{B'|}U_{A|C')} = \Lambda\nabla_{A(A'}\omega_{B'C')},  
\end{align}
but the left hand side is zero by the formulae (\ref{Einstein_curvature_unprimed}) and (\ref{scalar_curvature}).  Therefore $\nabla_{A(A'}\omega_{B'C')} = 0$,  since $\Lambda \ne 0$.  Lastly,  suppose that $\omega_{A'B'}$ was degenerate,  then $\omega_{A'B'} = \psi_{A'}\psi_{B'}$ for some $\psi_{A'}$.  Contracting (\ref{ctk}) with $\psi^{A'}\psi^{B'}$ yields $\psi^{B'}U_{B'A} = 0$,  which would imply the Killing vector is everywhere null.  
 \qed   \par
The Killing vector field actually remains a bona fide Killing vector in the new scale.  To see this note that the K\"ahler scale is defined by the $1$-form $\Upsilon_{AA'}$ where
\begin{align}\label{hamilton} U_{AA'}=-\omega_{A'}{}^{B'}\Upsilon_{AB'},\end{align}  
so 
\begin{align}
U^{AA'}\Upsilon_{AA'}
&=
-\omega^{A'}{}_{B'}\,
  \Upsilon^{AB'}\Upsilon_{AA'} = 0.  
\end{align}
  So the conformal factor is constant along the Killing vector and it remains Killing in the K\"ahler scale.  The equation (\ref{hamilton}) implies the Killing vector is a Hamiltonian vector field.  
  
Furthermore,  define $L_{AA'} = \Omega U_{AA'} \in \Gamma(T^*X[1])$.  Let $\hat{g}  = \Omega^2 g$ be the K\"ahler metric.  Now $\hat{\sigma} = \Omega^{-1} \sigma$ is the parallel section of $\mathcal{O}[1]$ in the K\"ahler scale.  Then $\hat{\sigma}^{-1}L_{AA'}$ is dual to the $1$-form of the Killing vector with respect to the K\"ahler metric.  The formulae (\ref{change_of_scales1},  \ref{change_of_scales2}) give:
\begin{align}\label{new_killing}
\hat{\nabla}_{AA'}L_{BB'} &= \Omega\nabla_{AA'}U_{BB'} + \Upsilon_{AA'}L_{BB} - \Upsilon_{BB'}L_{AA'}  + \Upsilon^{CC'}L_{CC'}\epsilon_{AB}\epsilon_{A'B'} \\ \nonumber
&= -\Omega\Lambda\epsilon_{AB}\omega_{A'B'} + \Omega\epsilon_{A'B'}\rho_{AB} + \Upsilon_{AA'}L_{BB'} - \Upsilon_{BB'}L_{AA'}.  
\end{align}
Now use (\ref{hamilton}) to get
\begin{align}
\hat\nabla_{AA'}L_{BB'}
=
{\Omega
(
-\Lambda+\frac12\Upsilon_{CC'}\Upsilon^{CC'})
\epsilon_{AB}\omega_{A'B'} }  + \Omega
\epsilon_{A'B'}
(
\rho_{AB}
-\Upsilon_{(A}{}^{C'}U_{B)C'}
). 
\end{align}
Extracting the anti-self-dual part of (\ref{Killing_equation_Kahler}) we may rewrite: 
\begin{align}
\rho_{AB}
-\Upsilon_{(A}{}^{C'}U_{B)C'}
&=
\frac12\nabla_{(A|C'|}U_{B)}{}^{C'}
-\Upsilon_{(A}{}^{C'}U_{B)C'} \\ \nonumber 
&= 
-\frac12\omega^{C'D'}
\left(
\nabla_{(A|C'|}\Upsilon_{B)D'}
-\Upsilon_{AC'}\Upsilon_{BD'}
\right).
\end{align} 
In particular,  the expression in brackets is precisely the Ricci curvature in the K\"ahler scale,  and from (\ref{Ricci_spinor}) we see in fact that the anti-self-dual part of $\hat{\nabla}_{AA'}L_{BB'}$ is pointwise proportional to $\epsilon_{A'B'}\phi_{AB}$.  \par
The conditions we have found above on the Killing vector are sufficient for it to determine an Einstein scale:
\begin{thm}\label{scalar_flat_kahler_Einstein_criterion}
An ASD-K\"ahler metric with non-degenerate Ricci spinor:
\begin{align}
\phi_{AB}\phi^{AB} \ne 0
\end{align}
is conformal to an Einstein metric on an open dense set if and only if it has a Killing vector field preserving the K\"ahler structure and
anti-self-dual derivative proportional to $\epsilon_{A'B'}\phi_{AB}$.   Explicitly,  the condition is that
\begin{align}\label{scalar_flat_kahler_conformally_Einstein_pde}
\nabla_{AA'}K_{BB'}
=
\lambda\epsilon_{AB}\omega_{A'B'}
+
\sigma\epsilon_{A'B'}\phi_{AB}
\end{align}
for some $\lambda \in \Gamma(\mathcal{O}[-3]),  \sigma \in \Gamma(\mathcal{O}[1])$,  where $\nabla_{AA'}$ is the Levi-Civita connection and $K_{AA'} \in \Gamma(T^*X)$ is the dual of the Killing vector,  both taken with respect to the K\"ahler scale.  
\end{thm}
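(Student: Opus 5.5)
Necessity is essentially the computation carried out just before the statement. Suppose $\tilde g=\Omega^{-2}\hat g$ is Einstein on an open dense set. It is ASD, so Ricci-flat or $\Lambda\neq 0$.
\begin{itemize}
\item \emph{Ricci-flat case.} I would rule this out, or treat it separately: an ASD Ricci-flat metric conformal to a scalar-flat K\"ahler one should force $\Upsilon$ to be a Killing-type gradient. I expect this to give $K=0$ in (\ref{scalar_flat_kahler_conformally_Einstein_pde}) with $\lambda=\sigma=0$, which is degenerate.
\item \emph{$\Lambda\neq0$ case.} By the converse direction of the preceding theorem, $\omega_{A'B'}$ yields $U_{AA'}$ solving (\ref{ctk}), hence a Killing vector in the Einstein scale. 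The computation (\ref{new_killing}) and its sequel shows that $L_{AA'}$ is Killing for $\hat g$. Its self-dual part is $\lambda\epsilon_{AB}\omega_{A'B'}$, so it preserves $\omega$ because it is Hamiltonian by (\ref{hamilton}). Its anti-self-dual part is proportional to $\epsilon_{A'B'}\phi_{AB}$, by (\ref{Ricci_spinor}) and the identity $\hat P=P-\nabla\Upsilon+\Upsilon\Upsilon$.
\end{itemize}

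For sufficiency, given (\ref{scalar_flat_kahler_conformally_Einstein_pde}) in the K\"ahler scale, I would aim to invert (\ref{hamilton}).
\begin{enumerate}
\item Define $\Upsilon_{AA'}$ by $K_{AA'}=-\omega_{A'}{}^{B'}\Upsilon_{AB'}$ up to scaling by $\sigma$. Concretely, $\Upsilon_{AA'}=c\,\sigma^{-1}\omega_{A'}{}^{B'}K_{AB'}$, with the normalisation of $\omega$ fixed so that $\omega_{A'}{}^{C'}\omega_{C'}{}^{B'}\propto\delta_{A'}^{B'}$.
\item Show that $\Upsilon$ is closed. Since $K$ preserves $\omega$, $K$ is Hamiltonian: $\mathcal L_K\omega=d(\iota_K\omega)=0$. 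The $1$-form $\iota_K\omega$ is exactly $\omega_{A'}{}^{B'}K_{AB'}$, so this part is immediate. The factor $\sigma^{-1}$ is more delicate. I would first differentiate (\ref{scalar_flat_kahler_conformally_Einstein_pde}) and commute derivatives, using the curvature formulae (\ref{ASD_and_scalar_curvature})--(\ref{scalar_curvature}) with $\Lambda=0$ and $\Phi=\phi\omega$. This should produce a prolonged system expressing $\nabla\sigma$ and $\nabla\lambda$ algebraically in terms of $K$, $\phi$ and $\nabla\phi$. Nondegeneracy $\phi_{AB}\phi^{AB}\ne0$ is what permits solving for $\nabla_{AA'}\sigma$.
\item Hope for a relation $\sigma\propto$ the Hamiltonian function $h$ with $dh=\iota_K\omega$, perhaps up to an affine change. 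The natural candidate is that $\Omega$ is affine in $h$, so that $\Upsilon=d\log\Omega$ with $\Omega=\sigma$ or $\Omega=h$ plus a constant.
\end{enumerate}

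The main obstacle is step 3: showing that the conformal factor built from $K$ yields $\tilde\Phi_{ABA'B'}=0$. Using (\ref{schouten_change}), the requirement is
\[
\phi_{AB}\omega_{A'B'}-\nabla_{(A|(A'}\Upsilon_{|B)B')}+\Upsilon_{(A|(A'}\Upsilon_{|B)B')}=0 .
\]
Reading the displayed computation after (\ref{new_killing}) backwards, the component along $\omega_{A'B'}$ is exactly the proportionality of the anti-self-dual derivative of $K$ to $\phi_{AB}$. The scalar coefficient there must be matched by choosing the additive constant in $\Omega$. The component of $\nabla\Upsilon$ that is trace-free in the primed indices and orthogonal to $\omega$ must vanish. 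I would check this using the fact that $K$ preserves the complex structure, so $\nabla K$ has no $J$-anti-invariant part. These steps need only hold where $\Omega\neq0$, which gives the open dense set.
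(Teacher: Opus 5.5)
Your proposal has a genuine gap in sufficiency, and a smaller but real error in the Ricci-flat branch of necessity. Both are closed by the same identity,
\[
\nabla_{AA'}\sigma=\omega_{A'}{}^{B'}K_{AB'}.
\]
This says the coefficient $\sigma$ in (\ref{scalar_flat_kahler_conformally_Einstein_pde}) is itself a Hamiltonian potential for $K$, with exactly this normalisation.

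\textbf{Sufficiency.} Your step 2 points at the right mechanism, but you never extract what it produces. Step 3, a ``hope'' plus a componentwise check of $\tilde\Phi$, cannot close without the identity. Let $h$ be any potential, $\nabla_{AA'}h=\omega_{A'}{}^{B'}K_{AB'}$. Then (\ref{scalar_flat_kahler_conformally_Einstein_pde}) makes $\nabla_{AA'}\nabla_{BB'}h+\sigma\phi_{AB}\omega_{A'B'}$ pure trace. So $h+c$ solves (\ref{cte}) iff $(h+c-\sigma)\phi_{AB}=0$, i.e.\ iff $d\sigma=dh$. Even closedness of your $\Upsilon=c\,\sigma^{-1}\iota_K\omega$ already needs $d\sigma\wedge\iota_K\omega=0$. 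Holomorphicity of $K$ and freedom in an additive constant cannot supply this.

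The paper proves the identity as follows. Differentiate (\ref{scalar_flat_kahler_conformally_Einstein_pde}) and commute derivatives, using $\nabla\omega=0$ and the contracted Bianchi identity $\nabla_{AC'}\phi_B{}^A=0$; the latter removes every $\nabla\phi$ term. Set $x=\omega_{C'}{}^{A'}\nabla_{BA'}\lambda$, $y=\phi_B{}^A\nabla_{AC'}\sigma$ and $r=-2\phi_B{}^A\omega_{C'}{}^{A'}K_{AA'}$. The SD and ASD parts then give $x-3y=r$ and $y-3x=r$. Hence $\phi_B{}^A(\nabla_{AC'}\sigma-\omega_{C'}{}^{A'}K_{AA'})=0$, and non-degeneracy of $\phi$ gives the identity.

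After that nothing remains. Substituting back shows $\nabla_{AA'}\nabla_{BB'}\sigma+\sigma\phi_{AB}\omega_{A'B'}$ is pure trace. This is (\ref{cte}) in the K\"ahler scale, where $\Lambda=0$ and $P_{AA'BB'}=\phi_{AB}\omega_{A'B'}$. So $\sigma^{-2}\hat g$ is Einstein wherever $\sigma\neq0$, and $\sigma\not\equiv0$ since otherwise $K=0$.

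\textbf{Necessity.} Your Ricci-flat branch rests on a false expectation. Suppose $\tilde g=\sigma^{-2}\hat g$ is Ricci-flat. Then $\sigma$ is non-constant, since otherwise $\hat g$ would be Einstein, hence Ricci-flat, contradicting $\phi_{AB}\phi^{AB}\neq0$. This $\sigma$ is exactly the coefficient in (\ref{scalar_flat_kahler_conformally_Einstein_pde}), so $K\neq0$ and ``$\lambda=\sigma=0$'' does not occur. The case has to be proved, not dismissed, but it is easy in either of two ways:
\begin{itemize}
\item Your own route through (\ref{ctk}), (\ref{Killing_equation_Kahler}), (\ref{hamilton}) and (\ref{new_killing}) never actually uses $\Lambda\neq0$. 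All you need is $U\neq0$, which holds because $\Omega$ is non-constant.
\item As the paper does, contract (\ref{cte}) with the parallel $\omega$. Defining $K$ by the displayed identity gives (\ref{scalar_flat_kahler_conformally_Einstein_pde}) at once, with $K$ Hamiltonian and hence K\"ahler-preserving. This handles every value of $\Lambda$ together.
\end{itemize}
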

\begin{proof}
The K\"ahler condition implies $\omega_{A'B'}$ is normalised according to
\begin{align}
\omega_{A'B'}\omega^{A'B'}=2,
\qquad
\omega_{A'}{}^{B'}\omega_{B'}{}^{C'}
=-\delta_{A'}{}^{C'}.
\end{align}
The forward implication was proved above in the case the Einstein scale was non-Ricci-flat but can in general be readily seen,  using the fact the K\"ahler form is parallel and contracting it with the conformal-to-Einstein equation (\ref{cte}).  \par
 Conversely,  suppose that
\eqref{scalar_flat_kahler_conformally_Einstein_pde} holds.
Differentiating
(\ref{scalar_flat_kahler_conformally_Einstein_pde}),
taking curvature, and equating the SD and ASD parts to zero gives
\begin{align}
\omega_{C'}{}^{A'}\nabla_{BA'}\lambda
-3\phi_B{}^A\nabla_{AC'}\sigma
&=-2\phi_B{}^A\omega_{C'}{}^{A'}K_{AA'},\\
\phi_B{}^A\nabla_{AC'}\sigma
-3\omega_{C'}{}^{A'}\nabla_{BA'}\lambda
&=-2\phi_B{}^A\omega_{C'}{}^{A'}K_{AA'},
\end{align}
using that $\omega_{A'B'}$ is parallel,  and the contracted Bianchi identity $
\nabla_{AC'}\phi_B{}^A=0$.  These yield:
\begin{align}
\omega_{C'}{}^{A'}\nabla_{BA'}\lambda
=
\phi_B{}^A\nabla_{AC'}\sigma.
\end{align}
Back-substitution into either equation then gives
\begin{align}
\phi_B{}^A
(
\nabla_{AC'}\sigma
-\omega_{C'}{}^{A'}K_{AA'}) = 0.  
\end{align}
The non-degeneracy of $\phi_{AB}$ implies
\begin{align}
\nabla_{AA'}\sigma
=
\omega_{A'}{}^{B'}K_{AB'}.
\end{align}
Note that this implies $\sigma$ is not identically zero,  otherwise $K_{AA'} = 0$.  \par 
Back-substitute into (\ref{scalar_flat_kahler_conformally_Einstein_pde}):
\begin{align}
\nabla_{AA'}\nabla_{BB'}\sigma
+\sigma\phi_{AB}\omega_{A'B'}
=
-\lambda\epsilon_{AB}\epsilon_{A'B'}.  
\end{align}
The trace-free part is the condition that $\sigma$ defines an Einstein scale where $\sigma \ne 0$.  
\end{proof} 
Evidently,  the above proof applies without modification in the real setting.  In Riemannian signature,  $\phi_{AB}\phi^{AB} \ne 0$ is just the statement the metric is not Ricci-flat.  The K\"ahler-preserving condition is that the Killing vector field is holomorphic for the complex structure.  
\begin{thm}
A Riemannian ASD-K\"ahler metric,  not Ricci-flat,  is conformal to an Einstein metric on an open dense set if and only if it has a holomorphic Killing vector field with
anti-self-dual derivative proportional to the Ricci spinor.  
\end{thm}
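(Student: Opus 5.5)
The plan is to deduce this statement from Theorem \ref{scalar_flat_kahler_Einstein_criterion} by checking that, in Riemannian signature, each of its hypotheses and conditions is the Riemannian reading of the corresponding phrase here. The proof of that theorem is purely algebraic–differential in the spinor calculus. So it holds verbatim for real Riemannian metrics after complexification: take the complexified spin bundles $S,S'$ with their quaternionic structures, and note that all the identities used (parallel $\omega_{A'B'}$, the contracted Bianchi identity, the curvature formulae) are identities of real tensors. Three translations then remain.

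\emph{First}, non-degeneracy of the Ricci spinor. In Riemannian signature $\phi_{AB}$ is a real section of $\odot^2S^*$, meaning it is invariant under the quaternionic conjugation on $S$. Such a symmetric spinor corresponds to an anti-self-dual real $2$-form, and $\phi_{AB}\phi^{AB}$ is a positive multiple of its squared norm. Hence $\phi_{AB}\phi^{AB}\neq 0$ holds exactly where $\phi_{AB}\ne 0$. By (\ref{Ricci_spinor}) and scalar-flatness, $\phi_{AB}=0$ means the metric is Ricci-flat. For a non-Ricci-flat metric, real-analyticity (Kähler scalar-flat metrics are real-analytic, being solutions of an elliptic system in harmonic coordinates) makes $\phi_{AB}\ne 0$ hold on an open dense set. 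We work there, which is compatible with the ``open dense set'' in the conclusion.

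\emph{Second}, ``preserving the Kähler structure'' versus holomorphic. A Killing field preserves $g$. Since it also preserves the orientation-compatible Kähler form $\omega$ iff $\mathcal{L}_K\omega=0$, and $J=g^{-1}\omega$, this is equivalent to $\mathcal{L}_KJ=0$, i.e.\ $K$ holomorphic. Spinorially, the self-dual part of $\nabla_{AA'}K_{BB'}$ must commute with $\omega_{A'}{}^{B'}$, which forces it to be $\lambda\epsilon_{AB}\omega_{A'B'}$. The anti-self-dual derivative is then $\sigma\epsilon_{A'B'}\phi_{AB}$ by hypothesis. This is exactly (\ref{scalar_flat_kahler_conformally_Einstein_pde}).

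\emph{Third}, reality of the conformal factor. The converse direction of Theorem \ref{scalar_flat_kahler_Einstein_criterion} produces $\sigma$ with $\nabla\sigma=\omega\cdot K$ and $(\nabla\nabla\sigma+P\sigma)_\circ=0$. The main point to check, and what I expect to be the only real obstacle, is that this $\sigma$ can be taken real and nonvanishing on an open dense set. Start from $\phi_B{}^A\nabla_{AC'}\sigma=\phi_B{}^A\omega_{C'}{}^{A'}K_{AA'}$. Both $\omega$ and $K$ are real, so $\nabla\sigma$ is real and $\sigma$ is real up to an additive constant. Since $\sigma$ is only determined by the equation up to that constant, and the proportionality function in the hypothesis is real, I would take $\sigma$ real from the outset. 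If $K\not\equiv 0$, then $\sigma\not\equiv0$, and analyticity gives $\sigma\ne 0$ on an open dense set. There $\sigma^{-2}g$ is a genuine real Riemannian Einstein metric. If $K\equiv0$, the equation reduces to $\lambda=0$ and $\sigma\phi_{AB}=0$, so $\sigma=0$; this degenerate case must be excluded. The statement implicitly requires $K$ to be a nontrivial Killing field, and we would say so.

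For the forward implication, given a real Einstein metric $\hat g=\sigma^{-2}g$, we take $K:=\omega\cdot d\sigma$ (up to sign), which is real. The computation preceding Theorem \ref{scalar_flat_kahler_Einstein_criterion}, together with its forward part, shows that $K$ is Killing, preserves $\omega$, and has anti-self-dual derivative $\sigma\phi_{AB}$. The case where the Einstein scale is Ricci-flat is covered by the contraction argument with (\ref{cte}) noted in that proof. Finally, $K\ne0$, since otherwise $\sigma$ would be constant, $g$ itself would be Einstein, and a scalar-flat Einstein metric is Ricci-flat, contradicting the hypothesis.
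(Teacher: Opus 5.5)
Your proposal is correct and follows the paper's own route. The paper deduces this theorem directly from Theorem \ref{scalar_flat_kahler_Einstein_criterion}, noting only that the argument holds verbatim in the real setting, that $\phi_{AB}\phi^{AB}\neq 0$ amounts to non-Ricci-flatness in Riemannian signature, and that the K\"ahler-preserving condition is holomorphicity. Your extra checks are details the paper leaves implicit and are worth keeping: the reality of $\sigma$, analyticity to get the open dense set, and the explicit exclusion of the trivial field $K\equiv 0$ (which satisfies the condition with $\lambda=\sigma=0$).
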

\begin{rem}
An argument using the Hitchin-Thorpe inequality implies there are no compact examples of ASD-K\"ahler  four-manifolds,  not Ricci-flat,  which are conformally Einstein by a global conformal factor \textup{\cite{LB}}.  
\end{rem} 
\begin{exmp}[Araneda's ASD Pleba\'nski-Demia\'nski metrics,  revisited]
It is shown in \textup{\cite{A25}} that the metrics \textup{(\ref{bernardo})} are conformally K\"ahler.  Take:
\begin{align}
g_{-}  = \frac{(p-q)^2}{(1 - pq)^2}g. 
\end{align}
It is K\"ahler with the K\"ahler form
\begin{align}
\omega_{-} = \frac{(d\tau - p^2 d\phi) \wedge dq + dp \wedge (d\phi - q^2 d\tau)}{(1-pq)^2}. 
\end{align}
A Killing vector field $K$ of Theorem \textup{\ref{scalar_flat_kahler_Einstein_criterion}} is obtained by solving  $\iota_K \omega_{-} = d\Omega^{-1}$ where the Einstein representative is given by $\tilde{g} = \Omega^2 g_{-}$.  It is 
\begin{align}
K = a_{1}\frac{\partial}{\partial \tau} + a_3 \frac{\partial}{\partial \phi}.  
\end{align}
\end{exmp}
\section[The metrisability equation for torsion-free Grassmannian structures]{A metrisability equation for torsion-free Grassmannian structures}\label{metrisability}
In this section we consider the conditions for an almost-Grassmannian structure to admit a compatible metric as per Definition \ref{compat_def}.  
As we explained earlier,  we must restrict ourselves to TFGs,  and the condition is vacuous for $n = 1$.  \par 
It was first shown (for a real analogue of the structures we are considering here) in \cite{F16} that in the torsion-free setting,  there is an overdetermined system of PDE,  the non-degenerate solutions of which correspond to compatible metrics.  \par  
We choose to rederive this metrisability equation from first principles here. 
All the arguments here are valid for all $n \ge 1$ unless otherwise specified.  Since we may have $n > 1$,  we do not have a natural object with which to raise and lower unprimed indices.  We do however introduce the notation,  that given a non-degenerate $\eta_{AB} \in \Gamma(\wedge^2S^*)$ we denote its \textit{algebraic dual} by $\eta^{AB} \in \Gamma(\wedge^2S)$ and vice versa.  That is $\eta_{AC}\eta^{BC} = \delta_{A}{}^{B}$.  Similarly for a top form $\omega_{AB...C} \in \Gamma(\wedge^{2n}S^*)$ we write $\omega^{AB...C} \in \Gamma(\wedge^{2n}S)$ for the unique section satisfying $\omega_{AB...C}\omega^{AB...C} = (2n)!$.  \par 

We need the following useful lemma:
\begin{lemma}\label{Weyl_LC_chracterisation}
Given a TFG,  the pair of Weyl connections $\nabla_{AA'}$ determined by $\omega_{AB...C} \in \Gamma(\wedge^{2n}S^*)$ and $\eta_{A'B'} \in \Gamma(\wedge^2 S'^*)$ satisfy
\begin{align}\label{metrisability_1}
\nabla_{AA'}\eta^{BC} = 2\delta_{A}{}^{[B} \mu^{C]}{}_{A'},  \\ 
\omega^{AB...CD} = \gamma \eta^{[AB}...\eta^{CD]} \label{metrisability_2}
\end{align}
for some skew $\eta^{BC} \in \Gamma(\wedge^2 S)$,  section $\mu^{AA'} \in \Gamma(TX[-1])$,   and constant $\gamma$  if and only if $\mu^{AA'} = 0$ and $\nabla_{AA'}$ induces the Levi-Civita connection for $g = \eta_{AB}\eta_{A'B'}$.  
\end{lemma}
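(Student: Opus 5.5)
The plan is to prove the two directions separately. The forward direction reduces to a short algebraic statement about wedging with a non-degenerate bivector. The reverse direction follows directly from parallelism.

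\emph{Reverse direction.} Suppose $\nabla_{AA'}$ induces the Levi-Civita connection of $g_{AA'BB'}=\eta_{AB}\eta_{A'B'}$, where $\eta_{AB}$ is the algebraic dual of $\eta^{AB}$. Then $\nabla g=0$. Since $\nabla_{AA'}$ is the Weyl connection for $\eta_{A'B'}$ (Proposition \ref{fundamental_theorem}), we also have $\nabla\eta_{A'B'}=0$. As $\eta_{A'B'}$ is non-degenerate, the Leibniz rule gives $\nabla_{CC'}\eta_{AB}=0$, and hence $\nabla_{CC'}\eta^{AB}=0$. So \eqref{metrisability_1} holds with $\mu^{AA'}=0$.

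For \eqref{metrisability_2}, note that $\omega^{AB\dots D}$ and $\eta^{[AB}\cdots\eta^{CD]}$ are both parallel sections of the line bundle $\wedge^{2n}S$. The latter is nowhere zero because $\eta^{AB}$ is non-degenerate. Their ratio is therefore a parallel function, i.e.\ a constant $\gamma$ on our (connected) neighbourhood.

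\emph{Forward direction.} Assume \eqref{metrisability_1} and \eqref{metrisability_2}. Note that $\gamma\neq0$, since $\omega^{A\dots D}$ is non-degenerate. Apply $\nabla_{EA'}$ to \eqref{metrisability_2}. The left-hand side vanishes, since $\omega$ is parallel for its Weyl connection. On the right-hand side, the Leibniz rule and \eqref{metrisability_1} replace one factor $\eta$ by $2\delta_E{}^{[\cdot}\mu^{\cdot]}{}_{A'}$, and after total skew-symmetrisation all $n$ such terms coincide. This gives
\begin{align*}
0 = 2n\gamma\,\delta_E{}^{[A_1}X^{A_2\dots A_{2n}]}{}_{A'},\qquad X^{A_2\dots A_{2n}}{}_{A'} := \mu^{[A_2}{}_{A'}\eta^{A_3A_4}\cdots\eta^{A_{2n-1}A_{2n}]}.
\end{align*}
Next I contract $E$ with $A_1$. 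For a $k$-vector $X$ on a space of dimension $m$, the trace of $\delta\wedge X$ is $\frac{m-k}{k+1}X$. Here $m=2n$ and $k=2n-1$, so the constant is $\frac{1}{2n}\neq 0$, and we conclude $X=0$.

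The key algebraic step, which I expect to be the main obstacle, is showing that the map $S\to\wedge^{2n-1}S$, $v\mapsto v\wedge\eta^{n-1}$, is injective when $\eta$ is non-degenerate. I would prove this in a symplectic (Darboux) basis $\eta=\sum_i e_{2i-1}\wedge e_{2i}$. There $e_j\wedge\eta^{n-1}$ is a nonzero multiple of the wedge of all basis vectors except the symplectic partner of $e_j$. These images are linearly independent, which gives injectivity; equivalently, wedging with $\eta^{n-1}$ and then contracting with the top form is a Hodge/Lefschetz isomorphism. Applying this for each fixed $A'$ yields $\mu^{AA'}=0$.

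Then $\nabla\eta^{AB}=0$, so $\nabla\eta_{AB}=0$ as well. Together with $\nabla\eta_{A'B'}=0$ this gives $\nabla g=0$. On a TFG the induced connection on $TX$ is torsion-free, so it is the Levi-Civita connection of $g$ by uniqueness.
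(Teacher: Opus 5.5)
Your proposal is correct and follows the paper's proof essentially step for step. For one direction you use parallelism of $\eta_{AB}$ and compare two parallel sections of the line bundle $\wedge^{2n}S$; for the other you differentiate $\omega^{AB\dots CD}=\gamma\,\eta^{[AB}\cdots\eta^{CD]}$ to get $\delta_{A}{}^{[C}\mu^{B}{}_{A'}\eta^{DE}\cdots\eta^{FG]}=0$. The one difference is cosmetic: the paper extracts $\mu^{AA'}=0$ by contracting directly with $\eta_{[CB}\cdots\eta_{FG]}$, whereas you take a trace and then use injectivity of $v\mapsto v\wedge\eta^{n-1}$, which makes explicit the Lefschetz-type nondegeneracy that the paper's one-line contraction relies on implicitly.
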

\begin{proof}
If $\nabla_{AA'}$ is the Levi-Civita connection then 
\begin{align}
\nabla_{AA'}g_{BB'CC'} = (\nabla_{AA'}\eta_{BC})\eta_{B'C'} = 0,
\end{align}
so (\ref{metrisability_1}) is satisfied with $\mu^{AA'} = 0$.  Since $\eta_{AB}$ is parallel,  so must its $n$-fold wedge product be,  which implies it is $\omega_{AB...C}$ up to a constant.  \par 
Conversely (\ref{metrisability_2}) implies
\begin{align}
 \nabla_{AA'}(\eta^{[BC}\eta^{DE}...\eta^{FG]})=0.  
\end{align}
Combined with (\ref{metrisability_1}) it yields
\begin{align}
\delta_{A}{}^{[C}\mu^{B}{}_{A'}\eta^{DE}...\eta^{FG]} = 0.  
\end{align}
Contracting with $\eta_{[CB}...\eta_{FG]}$ we see $\mu^{AA'} = 0$ and so $\nabla_{AA'}$ induces the Levi-Civita connection.  
\end{proof}
\begin{prop}\label{can_fix_iso}
Suppose that the connections $\nabla_{AA'}$ determined by $\omega_{AB...C} \in \Gamma(\wedge^{2n}S^*)$ and $\eta_{A'B'} \in \Gamma(\wedge^2 S'^*)$ induce the Levi-Civita connection for $g = \eta_{AB}\eta_{A'B'}$.  Then so do the connections determined by $\Omega\omega_{AB...CD}$ and $\Omega^{-1/n}\eta_{A'B'}$. 
\end{prop}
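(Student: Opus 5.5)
The plan is to write down the new pair of connections explicitly, check that they satisfy the defining properties in Proposition \ref{fundamental_theorem}, and then invoke the uniqueness part of that proposition. The point to exploit is that the proposed rescaling changes the unprimed and primed scales by reciprocal factors on the two tensor factors. So one can twist the connection on $S$ by a multiple of the identity and the connection on $S'$ by the opposite multiple, and the induced connection on $TX\cong S\otimes S'$ does not change at all.

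Concretely, set $\Upsilon_{AA'}=\nabla_{AA'}\log\Omega$ and define
\begin{align*}
\tilde\nabla_{AA'}\xi_B &= \nabla_{AA'}\xi_B - \tfrac{1}{2n}\Upsilon_{AA'}\xi_B, &
\tilde\nabla_{AA'}\xi_{B'} &= \nabla_{AA'}\xi_{B'} + \tfrac{1}{2n}\Upsilon_{AA'}\xi_{B'},
\end{align*}
on $S^*$ and $S'^*$ respectively. On $S$ and $S'$ we use the dual shifts, with the opposite signs. The first check is that $\tilde\nabla$ annihilates the new scales. Each of the $2n$ lower unprimed indices of $\omega_{AB\dots C}$ contributes $-\tfrac{1}{2n}\Upsilon$, giving $\tilde\nabla(\Omega\omega)=\Omega(\Upsilon\omega+\nabla\omega-\Upsilon\omega)=0$. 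Likewise the two primed indices of $\eta_{A'B'}$ give $\tilde\nabla(\Omega^{-1/n}\eta_{A'B'})=\Omega^{-1/n}\bigl(-\tfrac1n\Upsilon+\tfrac1n\Upsilon\bigr)\eta_{A'B'}=0$. The next check is the induced connection on $T^*X\cong S^*\otimes S'^*$. It acts on $\xi_{BB'}$ by $\nabla_{AA'}\xi_{BB'}-\tfrac{1}{2n}\Upsilon_{AA'}\xi_{BB'}+\tfrac{1}{2n}\Upsilon_{AA'}\xi_{BB'}=\nabla_{AA'}\xi_{BB'}$, so it is literally the old connection, namely the Levi-Civita connection of $g$.

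In particular the induced connection is torsion-free, so its torsion is trivially totally trace-free. The uniqueness statement of Proposition \ref{fundamental_theorem} then identifies $\tilde\nabla$ with the Weyl connections determined by $\Omega\omega_{AB\dots C}$ and $\Omega^{-1/n}\eta_{A'B'}$. It remains to see that this is the Levi-Civita connection of the metric built from the new scales. Put $\hat\eta_{AB}=\Omega^{1/n}\eta_{AB}$, so that $\hat\eta^{\wedge n}$ is a constant multiple of $\Omega\omega$. Then $\hat\eta_{AB}\,\Omega^{-1/n}\eta_{A'B'}=g$, so the new metric coincides with the old one and $\tilde\nabla=\nabla^{g}$. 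Equivalently, $\tilde\nabla\hat\eta_{AB}=\Omega^{1/n}\bigl(\tfrac1n\Upsilon-\tfrac{2}{2n}\Upsilon\bigr)\eta_{AB}+\Omega^{1/n}\nabla\eta_{AB}=0$, so Lemma \ref{Weyl_LC_chracterisation} applies with $\mu=0$.

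The only delicate step is confirming that the twisted pair is admissible in the sense of Proposition \ref{fundamental_theorem}. One needs it to be a pair of connections on $S$ and $S'$, flat on $\wedge^{2n}S$ and $\wedge^2S'$ because each preserves a nonvanishing section there, whose induced torsion is trace-free. This is exactly the bookkeeping of the weights $\pm\tfrac{1}{2n}$ above. I would double-check those signs against the dualisation conventions behind (\ref{change_of_scales1})--(\ref{change_of_scales3}) before concluding.
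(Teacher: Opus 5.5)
Your proof is correct and is essentially the paper's argument. The paper obtains exactly your connection, $\hat\nabla_{AA'}\mu^B=\nabla_{AA'}\mu^B+\frac{1}{2n}\Upsilon_{AA'}\mu^B$ on $S$ with the opposite shift on $S'$, by specialising the general change-of-scale formula of Proposition \ref{general_change} to $\tilde\Omega=\Omega^{-1/n}$, and then checks that $\hat\eta_{BC}=\Omega^{1/n}\eta_{BC}$ is parallel; you instead write the connection down directly and identify it with the Weyl pair via the uniqueness in Proposition \ref{fundamental_theorem}, which is slightly more self-contained, and your signs agree with the paper's.
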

\begin{proof}
We use Proposition \ref{general_change} with $\tilde{\Omega} = \Omega^{-1/n}$.  It gives 
\begin{align}
\hat{\nabla}_{AA'}\mu^{B} = \nabla_{AA'}\mu^{B} + \frac{1}{2n}\Upsilon_{AA'}\mu^{B}.  
\end{align}
From this we calculate $\hat{\nabla}_{AA'}\hat{\eta}_{BC} = 0$,  where $\hat{\eta}_{BC} = \Omega^{1/n} \eta_{BC}$.  
\end{proof}
Suppose we fix the auxiliary data of an isomorphism $\psi:  \wedge^2 S'^* \to \wedge^{2n} S^* $.  %In the presence of such an isomorphism,  given any algebraically compatible $g_{AA'BB'}$ there exists a factorisation 
%$g_{AA'BB'} = \eta _{AB}\eta_{A'B'}$
%such that $\psi(\eta_{A'B'}) = \eta_{[AB}...\eta_{CD]}$. 
What Proposition \ref{can_fix_iso} says is that in order to find Weyl connections which induce a Levi-Civita connection,  we do not need to vary the scales independently.  If there exist scales inducing a Levi-Civita connection for a compatible metric,  then there exist such scales related by $\psi$.  So in order to find all compatible metrics for a TFG,  we may as well fix a $\psi$,  and we need only consider transformations given by formulae (\ref{change_of_scales1}) and (\ref{change_of_scales2}) given in \cite{BE91}.  
\begin{thm}[Grassmannian metrisability equation]
\index{metrisability equation}
Given a TFG,  let $\nabla_{AA'}$ be any pair of Weyl connections.  There exists a compatible metric for the almost-Grassmannian structure if and only if there exists a non-degenerate $\eta^{AB} \in \Gamma(\wedge^2 S)$ and a $\mu^{AA'} \in \Gamma(TX[-1])$ such that \index{torsion-free Grassmannian structure!metrisability of}
\begin{align}\label{Grassmannian_metrisability}
\nabla_{AA'} \eta^{BC} = 2\delta_{A}{}^{[B} \mu^{C]}{}_{A'}. 
\end{align}
\end{thm}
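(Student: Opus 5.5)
The plan is to use Lemma \ref{Weyl_LC_chracterisation} in both directions, together with the observation that equation (\ref{Grassmannian_metrisability}) keeps its form under a change of Weyl connections (\ref{change_of_scales1}), (\ref{change_of_scales2}); only $\mu^{AA'}$ shifts. I would first establish this invariance. Take $\hat{\nabla}_{AA'}$ related to $\nabla_{AA'}$ by $\Upsilon_{AA'}$, and let $\eta^{BC}\in\Gamma(\wedge^2S)$, which is unweighted. Applying (\ref{change_of_scales1}) to each upper index gives
\begin{align}
\hat{\nabla}_{AA'}\eta^{BC} = \nabla_{AA'}\eta^{BC} + \delta_A{}^B\Upsilon_{DA'}\eta^{DC} - \delta_A{}^C\Upsilon_{DA'}\eta^{DB}.
\end{align}
The correction term equals $2\delta_A{}^{[B}\nu^{C]}{}_{A'}$ with $\nu^{C}{}_{A'} = \Upsilon_{DA'}\eta^{DC}$. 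Hence if $\eta^{BC}$ solves (\ref{Grassmannian_metrisability}) for $\nabla$ with some $\mu$, it solves it for $\hat{\nabla}$ with $\hat{\mu}=\mu+\nu$. Here the primed index is moved with $\epsilon_{A'B'}$, which keeps the weight bookkeeping consistent, since $\mu\in\Gamma(TX[-1])$. So the existence of a solution does not depend on which Weyl connection we use.

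\emph{Forward direction.} Suppose $g=\eta_{AB}\eta_{A'B'}$ is compatible. By definition, some Weyl connection $\hat{\nabla}$ induces its Levi-Civita connection. Proposition \ref{can_fix_iso} and the discussion after it let us assume that $\hat{\nabla}$ is obtained from the given $\nabla$ by a simultaneous rescaling as in Proposition \ref{restricted_weyl}. The easy half of Lemma \ref{Weyl_LC_chracterisation} gives $\hat{\nabla}_{AA'}\eta^{BC}=0$, so (\ref{Grassmannian_metrisability}) holds for $\hat{\nabla}$ with $\mu=0$. The invariance above transports this to $\nabla$, and the algebraic dual $\eta^{AB}$ of $\eta_{AB}$ is non-degenerate.

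\emph{Converse.} Let the given $\nabla$ be determined by $(\omega_{AB\ldots C},\eta_{A'B'})$, and suppose $\eta^{AB}$ is non-degenerate and solves (\ref{Grassmannian_metrisability}). Since $\eta^{[AB}\cdots\eta^{CD]}$ is a nowhere-vanishing section of $\wedge^{2n}S$, there is a non-vanishing function $\Omega$ with $\Omega^{-1}\omega^{AB\ldots CD}=\gamma\,\eta^{[AB}\cdots\eta^{CD]}$ for a fixed constant $\gamma$. Here $\Omega^{-1}\omega^{AB\ldots CD}$ is the normalised dual of $\hat{\omega}=\Omega\omega$. Now pass to the Weyl connections $\hat{\nabla}$ for $(\Omega\omega,\Omega\eta_{A'B'})$. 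By the invariance step, $\eta^{BC}$ still satisfies (\ref{metrisability_1}) for $\hat{\nabla}$, with some $\hat{\mu}$, and (\ref{metrisability_2}) now holds. Lemma \ref{Weyl_LC_chracterisation} (which uses the TFG hypothesis) then gives $\hat{\mu}=0$, and shows that $\hat{\nabla}$ induces the Levi-Civita connection of $g=\eta_{AB}\,\Omega\eta_{A'B'}$. This metric is non-degenerate because both factors are, so it is compatible.

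The main obstacle is the bookkeeping in the converse. One must check that a single rescaling by $\Omega$, acting on both factors as in Proposition \ref{restricted_weyl}, achieves (\ref{metrisability_2}) exactly, with a constant $\gamma$ and consistent normalisation of the algebraic dual. One must also track the weights carefully when $\mu$ is shifted, so that the transformed equation has precisely the form required by Lemma \ref{Weyl_LC_chracterisation}. If the simultaneous rescaling turned out to be awkward, I would instead rescale the two factors independently via Proposition \ref{general_change}, in the spirit of Proposition \ref{can_fix_iso}.
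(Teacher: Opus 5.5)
Your proposal is correct and follows essentially the same route as the paper. In both, one uses invariance of (\ref{Grassmannian_metrisability}) under the simultaneous rescalings of Proposition~\ref{restricted_weyl}, with $\mu$ shifted by $\Upsilon_{DA'}\eta^{DC}$; one rescales so that (\ref{metrisability_2}) holds and then applies Lemma~\ref{Weyl_LC_chracterisation}; and Proposition~\ref{can_fix_iso} reduces the other direction to such a rescaling. Your explicit computation of the shift in $\mu$ and your choice of the normalised dual $\Omega^{-1}\omega^{AB\ldots CD}$ just fill in details the paper leaves implicit.
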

\begin{proof} 
Suppose $\nabla_{AA'}$ correspond to scales $\eta_{A'B'}$ and $\omega_{AB...C}$.  Define $\psi: \wedge^2S'^* \to \wedge^{2n}S^*$ by $\psi(\eta_{A'B'}) = \omega_{AB...CD}$.  Suppose there exists $\mu^{AA'}$ satisfying (\textup{\ref{Grassmannian_metrisability}}).   From (\textup{\ref{change_of_scales1}}),  the condition (\textup{\ref{Grassmannian_metrisability}}) is invariant (after redefining $\mu^{AA'}$) under change of scales related by $\psi$.  Pick $\hat{\eta}_{A'B'}$ so $\psi(\hat{\eta}_{A'B'}) = \eta_{[AB}...
\eta_{CD]}$ and then the hypotheses of Lemma \textup{\ref{Weyl_LC_chracterisation}} are satisfied for $\hat{\nabla}_{AA'}$ to be the Levi-Civita connection for $\eta_{AB}\hat{\eta}_{A'B'}$.  Conversely if there exist Weyl connections $\hat{\nabla}_{AA'}$ which induce a Levi-Civita connection for $\eta_{AB}\hat{\eta}_{A'B'}$,  we know from Proposition \textup{\ref{can_fix_iso}} that without loss of generality we may assume the corresponding scales are related by $\psi$.  Then the formula (\textup{\ref{change_of_scales1}}) implies (\textup{\ref{Grassmannian_metrisability}}) is satisfied for $\nabla_{AA'}$ with $\mu^{A}{}_{A'} = \Upsilon_{BA'}\eta^{AB}$.  
\end{proof}
Note the equation is automatically satisfied for non-vanishing $\eta^{AB}$ when $n = 1$.  \par 
The equation (\ref{Grassmannian_metrisability}) is an overdetermined PDE that has been studied before (see \cite{F16,  HSSS12,  G25}) using the abstract machinery of parabolic geometry.   It defines a first Bernstein-Gelfand-Gelfand (BGG) operator\index{Bernstein-Gelfand-Gelfand operator} in almost-Grassmannian geometry.  
We may think of (\ref{Grassmannian_metrisability}) as an almost-Grassmannian analogue to the metrisability equation in projective geometry.  Solutions to that equation correspond to metrics for which the natural family of connections associated to the projective structure contains the Levi-Civita connection,  and it has been extensively studied both within the framework of parabolic geometry and much earlier: See for example \cite{EM07,  BDE09, DE16}.  \par 
\section{Prolongation of the metrisability equation and obstructions to metrisability}\label{tractor_obstructions}
Via \textit{invariant prolongation} \index{invariant prolongation}\index{metrisability equation!prolongation of} one may relate solutions to (\ref{Grassmannian_metrisability}) to sections of a vector bundle $\mathcal{A}$ that are parallel for a connection.  This prolongation was studied in \cite{HSSS12} Section 5.2.3 and \cite{HSSS12a}.  Full calculations in the notation we have adopted here may be found in \cite{Moy26}.  
The conclusion is the following:  We define a vector bundle that in the presence of a scale splits
\begin{align}\label{composition_series}
\mathcal{A} = \wedge^2S \oplus (S'{}^* \otimes S) \oplus \underbrace{\wedge^2S'{}^*}_{=\mathcal{O}[-1]}.
\end{align}
It is equipped with connection $D_{AA'}$ where:
\begin{align}
D_{AA'} \begin{bmatrix} \eta^{BC} \\ \mu_{B'}{}^{B} \\ \rho \end{bmatrix} = \begin{bmatrix} \nabla_{AA'}\eta^{BC} - 2\delta_{A}^{[B}\mu_{A'}{}^{C]} \\ \nabla_{AA'}\mu_{B'}{}^{B} - P_{AA'CB'}\eta^{BC} + \delta_{A}^{B}\epsilon_{A'B'}\rho \\ \nabla_{AA'}\rho + P_{AA'B}{}^{B'}\mu_{B'}{}^{B} \end{bmatrix}.  
\end{align}
$D_{AA'}$ is constructed in such a way that,  for $n > 1$,  solutions to (\ref{Grassmannian_metrisability}) correspond to parallel sections.  \par The connection still makes sense in the $n = 1$ case,  but now parallel sections correspond $\eta^{AB}$,  together with $\mu_{A'}{}^{A}$ defined by (\ref{Grassmannian_metrisability}) satisfying the additional condition coming from the vanishing of the middle slot for some $\rho$.  In this case,  the prolongation bundle and connection are equivalent to the conformal standard tractor bundle of \cite{BEG94} rewritten in spinor indices.   \par 
The bundle and connection are invariantly defined if the decomposition (\ref{composition_series}) depends on change in scale $\hat{\eta}_{A'B'} = \Omega \eta_{A'B'}$,  $\hat{\omega}_{AB...C} = \Omega \omega_{AB...C}$ in the following manner:
\begin{align}
\begin{bmatrix}
\hat{\eta}^{AB} \\
\hat{\mu}^{A}{}_{A'}  \\
\hat{\rho} 
\end{bmatrix} = 
\begin{bmatrix}
\eta^{AB} \\
\mu^{A}{}_{A'}  - \Upsilon_{BA'}\eta^{AB} \\
\rho - \Upsilon_{AA'}\mu^{AA'} + \frac{1}{2}\Upsilon_{AA'}\Upsilon_{B}{}^{A'}\eta^{AB}
\end{bmatrix}.  
\end{align}
Note that although (\ref{composition_series}) is not an invariantly defined direct sum decomposition,  the bundle does have an invariantly defined two-step filtration.  $\mathcal{A}$ may be realised as a subbundle of the second jet bundle $J^2(\wedge^2S)$ cut out by the equation (\ref{Grassmannian_metrisability}).  \par 
We in fact have an identification $\mathcal{A} = \wedge^2 \mathcal{T}$ where $\mathcal{T}$ is the \textit{standard tractor bundle}\index{tractor!bundle!standard} that in the presence of a scale splits $\mathcal{T} = S \oplus S'^*$ such that decomposition changes according to 
\begin{align}
\begin{bmatrix}
\hat{\sigma}^{A} \\
\hat{\mu}_{A'} 
\end{bmatrix} 
=
\begin{bmatrix}
\sigma^{A} \\
\mu_{A'} - \Upsilon_{AA'}\sigma^{A}  
\end{bmatrix} 
\end{align}
$D_{AA'}$ on $\mathcal{A}$ is induced by the connection 
\begin{align}
D_{AA'} \begin{bmatrix}
{\sigma}^{B} \\
{\mu}_{B'} 
\end{bmatrix} = 
\begin{bmatrix}
\nabla_{AA'}\sigma^{B} + \delta_{A}{}^{B}\mu_{A'} \\
\nabla_{AA'}\mu_{B'} - P_{AA'BB'}\sigma^{B} 
\end{bmatrix}
\end{align}
and the identification $\mathcal{A} = \wedge^2 \mathcal{T}$. \par 
The connection $D_{AA'}$ is the normalised tractor connection induced by the Cartan connection\index{Cartan connection} arising from the general theory of parabolic geometry.  For a general almost-Grassmannian structure (so possibly in the presence of torsion),  the prolongation of (\ref{Grassmannian_metrisability}) may not lead to the Cartan connection (see \cite{HSSS12} for the general expression).  \par 
In older terminology,  the bundle $\mathcal{T}$ equipped with the connection $D_{AA'}$ generalises the \textit{local twistor} bundle\index{twistor!local} and connection of four-dimensional conformal geometry.  See also the exposition of how this bundle is realised via the Ward correspondence in \cite{Lam23}.   \par 
The curvature of $D_{AA'}$ on $\mathcal{T}$ for a TFG is the homomorphism $\mathcal{T} \to \wedge^2 T^*X \otimes \mathcal{T}$ given by\index{tractor!connection!curvature of}
\begin{align}\label{tractor_curvature}
\begin{bmatrix}
{\sigma}^{A} \\
{\mu}_{A'} 
\end{bmatrix} \mapsto \begin{bmatrix}
\Psi_{ABD}{}^{C}\sigma^{D}\epsilon_{A'B'} \\
-(\nabla_{AA'}P_{BB'DC'} - \nabla_{BB'}P_{AA'DC'})\sigma^{D} 
\end{bmatrix}. 
\end{align}
%The following are helpful:
%\begin{prop}[Bianchi identities for TFGs]\label{TFG_Bianchi}
%\index{Bianchi identity!for TFGs}
%The following hold for general Weyl connections on a TFG
%\begin{align}
%\nabla_{[A}{}^{A'}\Phi_{B]CA'B'} + 3\nabla_{[A|B'}\Lambda_{|B]C} &= 0,  \label{Bianchi1} \\
%\nabla_{[A|A'}\Lambda_{|BC]} &= 0,  \label{Bianchi2}\\
%\nabla_{DA'}\Psi_{ABC}{}^{D} + (2n-1)\nabla_{A}{}^{B'}\Phi_{BCA'B'} + 2(2n-1)\nabla_{(B|A'}\Lambda_{|C)A} &= 0,  \label{Bianchi3}\\
%\nabla_{DA'}\Psi_{ABC}{}^{D} + (2n-1)\nabla_{(A}{}^{B'}P_{B)B'CA'} &= 0, \label{Bianchi4} \\ 
%\nabla_{[A|}{}^{(A'}P_{B]}{}^{B')}{}_{CC'} &= 0.  \label{Bianchi5}
%\end{align}
%\end{prop}
%It is explained how to derive these in Appendix \ref{bianchi_appen}.  
From the Bianchi identity (given in \cite{BEG94} or following from calculations in Appendix \ref{bianchi_appen})
\begin{align}
\nabla_{AA'}\Psi_{BCD}{}^{A} + (2n-1)\nabla^{B'}{}_{(B}P_{C)B'DA'}
\end{align}
and the Leibniz rule,  we see the curvature $F_{AA'BB'} \in \Gamma(\wedge^2T^*X \otimes \End(\mathcal{A}))$ (suppressing tractor indices) of $D_{AA'}$ on $\mathcal{A}$ is given by
\begin{align}
F_{AA'BB'}
\begin{bmatrix} \eta^{CD} \\ \mu_{C'}{}^{C} \\ \rho \end{bmatrix}  = 
 \begin{bmatrix} -2\epsilon_{A'B'}\Psi_{ABE}{}^{[C}\eta^{D]E}
 \\ \epsilon_{A'B'}\Psi_{ABD}{}^{C}\mu_{C'}{}^{D} - \frac{1}{(2n-1)}\epsilon_{A'B'}\nabla_{EC'}\Psi_{ABD}{}^{E}\eta^{CD} \\
-\frac{1}{(2n-1)}\epsilon_{A'B'}\nabla_{DC'}\Psi_{ABC}{}^{D}\mu^{CC'}
 \end{bmatrix},  
\end{align}
which vanishes if and only if $\Psi_{ABC}{}^{D} = 0$ and the TFG is locally isomorphic to the flat model.  \par 
From this curvature formula we obtain necessary algebraic conditions on sections of $\mathcal{A}$ to be parallel for $D_{AA'}$.  Since a compatible metric corresponds to a parallel section with non-degenerate $\eta^{AB}$,  we may readily use these conditions to derive algebraic obstructions to the existence of a compatible metric.  The invariance of the tractor connection implies the formulae for such obstructions will be valid in an arbitrary scale.  
For example:
\begin{prop}\label{easy_obstruct}
If there exists a compatible metric,  then for $k \in \mathbb{N}$ we have
\begin{align}
&\Psi_{A_1B_1C_1}{}^{C_2}\Psi_{A_2B_2C_2}{}^{C_3}...\Psi_{A_kB_kC_k}{}^{C_1} \nonumber \\ + (-1)^{k+1}&\Psi_{A_kB_kC_1}{}^{C_2}\Psi_{A_{k-1}B_{k-1}C_2}{}^{C_3}...\Psi_{A_1B_1C_k}{}^{C_1} = 0. 
\end{align}
\end{prop}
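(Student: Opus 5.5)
The plan is to use the fact that a compatible metric gives a parallel section of $\mathcal{A}$, so the curvature $F_{AA'BB'}$ of $D_{AA'}$ must annihilate it. Only the top slot of the curvature formula is needed. It then remains to read off what that algebraic condition says about $\Psi_{ABC}{}^{D}$, and to take traces of products.

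\emph{Step 1.} If there is a compatible metric, the Grassmannian metrisability theorem gives a non-degenerate $\eta^{AB}\in\Gamma(\wedge^2S)$ solving (\ref{Grassmannian_metrisability}). For $n>1$, the prolongation turns this into a $D_{AA'}$-parallel section $(\eta^{BC},\mu_{B'}{}^{B},\rho)$ of $\mathcal{A}$. Parallel sections are killed by the curvature, so the top component of $F_{AA'BB'}$ gives
\begin{align}
\Psi_{ABE}{}^{[C}\eta^{D]E}=0 .
\end{align}
For fixed $A,B$ (or after contracting with arbitrary $\alpha^A\beta^B$), set $M_E{}^{C}:=\Psi_{ABE}{}^{C}$. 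The condition then says that $M_E{}^{C}\eta^{DE}$ is symmetric in $C,D$. Lowering with the algebraic dual $\eta_{CF}$, this means $M$ lies in $\mathfrak{sp}(\eta)$. Equivalently, the $\eta$-adjoint $M^{\ast}:=\eta^{-1}M^{T}\eta$ satisfies $M^{\ast}=-M$. In index form,
\begin{align}
\eta_{FC}\,\Psi_{ABE}{}^{C}\,\eta^{EG}=-\Psi_{ABF}{}^{G},
\end{align}
up to the sign and ordering conventions, which I will fix carefully.

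\emph{Step 2.} Take $k$ such endomorphisms $M_i=\Psi_{A_iB_i\,\cdot}{}^{\cdot}$. The left-hand term of the proposition is $\operatorname{tr}(M_1M_2\cdots M_k)$ in the appropriate composition order. Since $\eta$ is non-degenerate, the trace is invariant under conjugation and transposition, so
\begin{align}
\operatorname{tr}(M_1\cdots M_k)=\operatorname{tr}\big((M_1\cdots M_k)^{\ast}\big)=\operatorname{tr}(M_k^{\ast}\cdots M_1^{\ast})=(-1)^k\operatorname{tr}(M_k\cdots M_1).
\end{align}
The right-hand trace is exactly the second term, $\Psi_{A_kB_kC_1}{}^{C_2}\cdots\Psi_{A_1B_1C_k}{}^{C_1}$, and moving it across gives the stated identity. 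In abstract indices, this amounts to inserting $\delta=\eta\eta^{-1}$ between consecutive factors and using the displayed relation on each factor.

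\emph{Expected obstacle.} The only delicate point is bookkeeping. I need to check that the cyclic index pattern in the statement really matches the reversed product $M_k\cdots M_1$, with the $C$-indices contracted in the correct order, and that the sign is $(-1)^{k}$ rather than its negative. Because the resulting identity does not involve $\eta$ at all, it holds in any scale, consistent with the invariance remark preceding the proposition. The case $n=1$ is not needed here, but there the identity is in any case automatic, since $\Psi$ is totally symmetric with indices raised by $\epsilon$.
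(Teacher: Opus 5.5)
Your proposal is correct and is essentially the paper's proof. The paper likewise takes $\Psi_{ABE}{}^{[C}\eta^{D]E}=0$ from the top slot of the tractor curvature and rewrites it, after lowering with $\eta$, as $\Psi_{AB(CD)}=\Psi_{ABCD}$; it then reverses the cyclic chain by swapping the vertical positions of each of the $k$ contracted pairs, at a total cost of $(-1)^k$. That is exactly your statement that each $\Psi_{AB\,\cdot}{}^{\cdot}$ lies in $\mathfrak{sp}(\eta)$, so that $\operatorname{tr}(M_1\cdots M_k)=(-1)^k\operatorname{tr}(M_k\cdots M_1)$, and your matching of the second term to $\operatorname{tr}(M_k\cdots M_1)$ and the sign $(-1)^{k+1}$ are both right.
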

\begin{proof}
The $k = 1,2$ cases are trivial identities and not obstructions.  For $k \ge 3$,  we will lower and raise indices with a non-degenerate $\eta^{AB}$.  Then the condition $\Psi_{ABE}{}^{[C}\eta^{D]E} = 0$ is the condition $\Psi_{AB(CD)} = \Psi_{ABCD}$.  We may therefore write
\begin{align}
\Psi_{A_1B_1C_1}{}^{C_2}\Psi_{A_2B_2C_2}{}^{C_3}...\Psi_{A_kB_kC_k}{}^{C_1} 
 &= \Psi_{A_1B_1}{}^{C_2}{}_{C_1}\Psi_{A_2B_2}{}^{C_3}{}_{C_2}...\Psi_{A_kB_k}{}^{C_1}{}_{C_k}  \nonumber \\
 &= (-1)^{k}\Psi_{A_1B_1C_2}{}^{C_1}\Psi_{A_2B_2C_3}{}^{C_2}...\Psi_{A_kB_kC_1}{}^{C_k},  
\end{align}
 where in the last line we have used the fact we may change the vertical position of a pair of contracted indices at the cost of a sign.  
\end{proof}
It is the Razmyslov-Procesi\index{Razmyslov-Procesi theorem} \cite{R74,  Pr76} theorem that given $m$ matrices of dimension $l \times l$,  all polynomial invariants of the set of matrices may be expressed in terms of traces of degree less than or equal to $l^2$.  Pick a basis so that $\Psi_{ABC}{}^{D}$ defines a tuple of $2n(2n+1)/2$ matrices (think of the last two indices as components of the matrices).  The theorem tells us the above formula stops producing independent obstructions when $k > 4n^2$.  \par 
\begin{exmp}
Let us construct an example for which the $k = 3$ obstruction does not vanish.  Take constant vectors $K^{a},  J_{a}$ with $K^{a}J_{a} \ne 0$ and some integer $\mu \ge 3$.  Write $J \cdot \theta = J_{a}\theta^{a}$.  Consider
 \textup{(\ref{hyper_hermitian_example})} with 
 \begin{align}
 W^{a} = K^{a}(J \cdot \theta)^\mu.  
 \end{align}
This solves \textup{(\ref{hyper_hermitian_right_flat})} and we have,  normalising so $K^{a}J_{a} = 1$, 
\begin{align}
\Psi_{abc}{}^{d} = \gamma(\theta)(J_{a}J_{b}J_{c}K^{d} - \frac{3}{2n+2}J_{(a}J_{b}\delta_{c)}{}^{d})
\end{align}
for $\gamma(\theta) = \mu(\mu-1)(\mu-2)(J \cdot \theta)^{(\mu-3)}$.  It is straightforward to compute
\begin{align}
K^{a}K^{b}K^{e}K^{f}K^{h}K^{i}\Psi_{abc}{}^{d}\Psi_{efd}{}^{g}\Psi_{hig}{}^{c} = \gamma(\theta)^3 \cdot \frac{2n(2n-1)(2n-2)}{(2n+2)^3},
\end{align}
which shows the obstruction of Proposition \textup{\ref{easy_obstruct}} does not vanish unless $n=1$.  \par 
On the other hand taking $W^{a}$ as in \textup{(\ref{simple_soln})},  the obstructions trivially vanish.  For this example there is a non-degenerate solution $\eta^{de}$ to $\Psi_{abe}{}^{[c}\eta^{d]e} = 0$ if and only if $T_{abc} = J_{(a}J_bJ_{c)}$ for some constant vector $J_{a}$.  This shows vanishing of the obstructions above is not sufficient for the existence of a non-degenerate solution.  
\end{exmp}
\section{Submaximally symmetric Grassmannian structures}\label{submax}
Kruglikov and The \cite{KT} exhibited a symmetry gap for almost-Grassmannian structures (as a special case of their general results for parabolic geometries).  That is,  they showed the following example has the maximal number ($4n^2 + 5$) of linearly independent infinitesimal symmetries in the non-flat setting: 
\begin{exmp}[Kruglikov-The model]
\label{Kruglikov_The}
We refer back to Example \textup{\ref{hyper_hermitian_example}}.  
Let $J_a$ and $K^a$ be non-zero constant vectors $J_aK^a=0$
and write $J\cdot\theta:=J_a\theta^a$.  Set
\begin{align}
W^a=-\frac{1}{2}K^a(J\cdot\theta)^3.
\label{Kruglikov_The_W}
\end{align}
Then 
\begin{align}
U_a
&=
\frac{\partial}{\partial z^a}
+
\frac{3}{2}J_a(J\cdot\theta)^2
K^b\frac{\partial}{\partial\theta^b},
\\
V_a
&=
\frac{\partial}{\partial\theta^a}.
\end{align}
Clearly \textup{(\ref{hyper_hermitian_right_flat})} holds.  
The associated Weyl connections are defined by
$$
\Gamma_{aa'b}{}^c
=
-3(J\mathbin{\cdot}\theta)
J_aJ_bK^c\delta_{a'}{}^{1'},
\qquad
\Gamma_{aa'b'}{}^{c'}=0.
$$
So the scale is Ricci-flat and the remaining curvature is the invariant part:
\begin{align}
\Psi_{abc}{}^d
=
-3J_aJ_bJ_cK^d.
\label{Kruglikov_The_curvature}
\end{align}
\end{exmp}
The generators of the symmetries are provided explicitly in \cite{KT}.  A later result of The \cite{The} shows this submaximal model is unique.  
\begin{prop}[Compatible metrics of the submaximally symmetric model]
Suppose $n > 1$.  The submaximally symmetric model Example \textup{\ref{Kruglikov_The}} admits at least $2n^2 - 3n + 4$ linearly independent compatible metrics.  Explicitly 
\begin{align}
\eta^{ab} = \sigma^{ab} + (\beta z^{[a} + \gamma \theta^{[a})K^{b]}
\end{align}
for some constants $\beta,  \gamma,  \sigma^{ab} = -\sigma^{ba}$,  
where $\sigma^{ab}$ must satisfy $\sigma^{ab}J_{b} = \alpha K^{a}$ for some constant $\alpha$.  
\end{prop}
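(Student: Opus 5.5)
The plan is to verify directly that the stated $\eta^{ab}$ solves the Grassmannian metrisability equation (\ref{Grassmannian_metrisability}) in the Ricci-flat scale of Example \ref{Kruglikov_The}. I would then count parameters and exhibit a non-degenerate member, so that the Grassmannian metrisability theorem turns solutions into compatible metrics. Throughout I work in the frame $e_{a0'}=V_a$, $e_{a1'}=U_a$ and the trivialisation $\{\sigma_a\}$. In this frame the only non-zero connection coefficients are $\Gamma_{a1'b}{}^c=-3(J\cdot\theta)J_aJ_bK^c$, so
\begin{align}
\nabla_{aa'}\eta^{bc}=e_{aa'}(\eta^{bc})+\Gamma_{aa'd}{}^{b}\eta^{dc}+\Gamma_{aa'd}{}^{c}\eta^{bd}.
\end{align}

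First consider the $0'$ component. Here $V_a\eta^{bc}=\gamma\,\delta_a{}^{[b}K^{c]}$ and there are no connection terms, so this is already of the required form $2\delta_a{}^{[b}\mu^{c]}{}_{0'}$ with $\mu^{c}{}_{0'}=\tfrac12\gamma K^c$.

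Next consider the $1'$ component. The derivative part is
\begin{align}
U_a\eta^{bc}=\beta\,\delta_a{}^{[b}K^{c]}+\tfrac32 J_a(J\cdot\theta)^2K^d\partial_{\theta^d}\big(\gamma\theta^{[b}K^{c]}\big)=\beta\,\delta_a{}^{[b}K^{c]},
\end{align}
because the second term is proportional to $K^{[b}K^{c]}=0$. The connection part is
\begin{align}
-3(J\cdot\theta)J_a\big(K^bJ_d\eta^{dc}-K^cJ_d\eta^{db}\big).
\end{align}
This is the crux. A term of the shape $J_aK^{[b}X^{c]}$ is not of the form $\delta_a{}^{[b}\mu^{c]}$ when $n>1$, so it must vanish identically. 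That happens precisely when $J_d\eta^{dc}\propto K^c$. Using $J\cdot K=0$, the non-constant part of $\eta$ contributes $J_d(\beta z^{[d}+\gamma\theta^{[d})K^{c]}=\tfrac12(\beta J\cdot z+\gamma J\cdot\theta)K^c$, which is already proportional to $K^c$. So the requirement reduces exactly to the stated constraint $\sigma^{ab}J_b=\alpha K^a$. Under that constraint the equation holds with $\mu^c{}_{1'}=\tfrac12\beta K^c$. Since $\nabla$ is a genuine pair of Weyl connections, the Grassmannian metrisability theorem applies to any non-degenerate such $\eta^{ab}$.

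For the count, observe that skewness gives $J_a\sigma^{ab}J_b=0$ automatically. Hence $\sigma^{ab}J_b$ ranges over the $(2n-1)$-dimensional annihilator of $J$, which contains $K$. Requiring $\sigma^{ab}J_b\in\operatorname{span}(K)$ therefore imposes $2n-2$ linear conditions on the $n(2n-1)$-dimensional space of constant skew $\sigma$. This leaves $2n^2-3n+2$ parameters. Adding $\beta,\gamma$, whose terms are visibly independent of the constant part and of each other since they carry $z$ and $\theta$ dependence respectively, gives $2n^2-3n+4$ linearly independent solutions.

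Finally I need non-degeneracy. Choose a basis with $J=e^1$ and $K=e_2$ (possible since $J\cdot K=0$), and take $\sigma=e_1\wedge e_2+\sum_{i\ge2}e_{2i-1}\wedge e_{2i}$. Then $\sigma^{a1}\propto\delta^a_2$, so the constraint holds, and $\sigma$ is non-degenerate. This choice needs $2n\ge4$, which is where $n>1$ is used. Non-degeneracy is then an open condition, so the non-degenerate solutions are open and dense in the solution space and hence span it. This yields the claimed number of linearly independent compatible metrics.

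The main obstacle I expect is getting the index and sign conventions of the connection on $S$ right, in particular whether $\Gamma$ enters with a plus or minus sign and on which index. The structural cancellation, namely that the connection terms are proportional to $K^{[b}(J\eta)^{c]}$, does not depend on that sign, so I would check it first.
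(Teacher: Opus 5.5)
Your proposal is correct and follows essentially the same route as the paper: a direct computation in the frame $e_{aa'}$ of the Kruglikov--The model, where the connection term $6(J\cdot\theta)J_aJ_dK^{[b}\eta^{c]d}$ is killed exactly when $J_d\eta^{dc}\propto K^c$, followed by the same count of $2n-2$ constraints on $\sigma^{ab}$ plus the two parameters $\beta,\gamma$. The differences are minor: the paper derives the $\beta,\gamma$ family from the ansatz $\eta^{ab}=X^{[a}K^{b]}$ where you only verify it, and you additionally check non-degeneracy, which the paper leaves implicit (your remark that this needs $2n\ge 4$ is not quite right, since for $n=1$ the sum is empty and $e_1\wedge e_2$ still works).
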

\begin{proof}
In the given Weyl connections we have
\begin{align}
\nabla_{aa'}\eta^{bc} = e_{aa'}(\eta^{bc}) + 6(J \cdot \theta)J_{a}J_{d}K^{[b}\eta^{c]d}\delta_{a'}{}^{1'},
\end{align}
where $e_{a0'} = V_{a}$ and  $e_{a1'} = U_{a}$.  
So the metrisability equation is 
\begin{align}
e_{aa'}(\eta^{bc}) - \frac{2}{2n-1}e_{da'}(\eta^{d[c})\delta_{a}{}^{b]} + 6(J \cdot \theta)J_{a}J_{d}K^{[b}\eta^{c]d}\delta_{a'}{}^{1'} = 0.  
\end{align}
There is an obvious solution,  which is to take $\eta^{ab} = \sigma^{ab}$ constant and with the right structure for the last term to vanish.  This is precisely $\sigma^{ab}J_{b} = \alpha K^{a}$ for some constant $\alpha$,  which provides $2n-2$ independent constraints on what are otherwise $2n^2-n$ constants which we are free to choose.  This yields $2n^2 - 3n + 2$ compatible metrics.  To find more,  note that setting $\eta^{ab} = X^{[a}K^{b]}$ for some (potentially non-constant) $X^{a}$ again kills the last term.  The equations reduce to 
\begin{align}
\bigg(\frac{\partial X^{[b}}{\partial \theta^{a}}K^{c]}\bigg)_\circ = 0 ,  \quad a' &= 0',  \\
\bigg(\frac{\partial X^{[b}}{\partial z^{a}}K^{c]} + \frac{3}{2}J_{a}(J \cdot \theta)^2K^{d}\frac{\partial X^{[b}}{\partial \theta^{d}}K^{c]}  \bigg)_\circ = 0 ,  \quad a' &= 1'.  
\end{align}
The general solution to the first equation is $X^{a} = \gamma \theta^{a} + f^{a}(z)$,  and note that given this,  the second term in the second equation vanishes.  So the general solution for this ansatz is $X^{a} = \beta z^{a} + \gamma \theta^{a}$ for some constants $\beta,  \gamma$.  This provides the two additional degrees of freedom.  
\end{proof}
The metrics with $\beta = \gamma = 0$ are hyper-K\"ahler,  whereas the metrics in the family generically have non-zero scalar curvature.  The hyper-K\"ahler examples are the product of an ASD Ricci-flat pp-wave with a flat factor.  \par 
Of course,  an infinitesimal symmetry of the Grassmannian structure may not preserve the extra structure of a chosen compatible metric,  so it may be interesting to describe how they act on the space of compatible metrics.  \par 
In the flat model,  there are $\text{rank} \mathcal{A} = (2n + 1)(n + 1)$ linearly independent compatible metrics.  Here we bound the next allowed number from above and it turns out this bound is saturated by the last example. 
\begin{prop}[Bound on compatible metrics]
Let $n > 1$.  For a non-flat TFG there are at most $2n^2 - 3n + 4$ linearly independent compatible metrics.  
\end{prop}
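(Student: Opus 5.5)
The plan is to bound the dimension of the space of $D_{AA'}$-parallel sections of $\mathcal{A}=\wedge^2\mathcal{T}$ using the curvature formula derived above. Compatible metrics correspond to parallel sections with non-degenerate $\eta^{AB}$. The span of any family of linearly independent compatible metrics therefore injects into the space of parallel sections. A parallel section is determined by its value at a single point $x$, so it suffices to bound the dimension of the subspace $V_x\subset\mathcal{A}_x$ of values of parallel sections.

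The first step is to choose $x$ with $\Psi_{ABC}{}^{D}(x)\neq 0$, which is possible because the TFG is non-flat. Every parallel section must be annihilated by the curvature $F_{AA'BB'}$. Its top slot gives the algebraic condition
\begin{align}
\Psi_{ABE}{}^{[C}\eta^{D]E}=0,
\end{align}
and its middle slot gives
\begin{align}
\Psi_{ABD}{}^{C}\mu_{C'}{}^{D}=\frac{1}{2n-1}\nabla_{EC'}\Psi_{ABD}{}^{E}\eta^{CD}.
\end{align}
The slot $\rho$ is unconstrained algebraically and contributes at most $1$.

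The second step is the main algebraic estimate: bound the dimension of $\{\eta\in\wedge^2 S_x : \Psi\cdot\eta=0\}$ by $2n^2-3n+2$. I would view $\Psi_{AB}\in\mathfrak{sl}(S_x)$, a family of trace-free endomorphisms parametrised by $AB$. The condition says that $\eta$, viewed as a bivector, is annihilated by every $\Psi_{AB}$ acting as a derivation, i.e.\ $\eta$ is invariant under the nonzero subspace $\mathfrak{h}=\mathrm{span}\{\Psi_{AB}\}\subset\mathfrak{sl}(2n)$. It suffices to take $\mathfrak{h}$ spanned by one nonzero $N\in\mathfrak{sl}(2n)$ and maximise $\dim\ker(N \text{ on } \wedge^2)$. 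For $N$ semisimple, the maximum occurs when $N$ has eigenvalue pattern with one eigenvalue of multiplicity $2n-1$. In the nilpotent case, the minimal rank-one $N=K\otimes J$ (as in the Kruglikov--The model) gives kernel consisting of $\eta$ with $\eta(J,\cdot)\propto K$, which has dimension $\binom{2n}{2}-(2n-1)+1=2n^2-3n+2$. I would check the rank-one nilpotent and rank-one semisimple cases directly. For general $N$, I would argue by semicontinuity that the kernel dimension is maximised on the minimal nilpotent orbit closure: the kernel dimension is upper semicontinuous and every nonzero orbit closure contains the minimal orbit, so the maximum is attained there. The semisimple case is then a direct count. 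This is the step I expect to be the main obstacle, particularly making sure that no configuration of several $\Psi_{AB}$, together with the constraint that $\Psi$ is totally symmetric and trace-free, does better. A single element already achieves the bound, so one only needs to check that a single element cannot exceed it.

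The third step bounds the contribution of $\mu$. For fixed $\eta$, the admissible $\mu$ form an affine space modelled on $\{\mu\in S'^*\otimes S : \Psi_{ABD}{}^{C}\mu_{C'}{}^{D}=0\}$, i.e.\ two copies of the common kernel of the $\Psi_{AB}$ on $S$. I would bound this common kernel by $0$ for generic configurations, but that is not true in general; for the model, $\ker$ is $K^{\perp_J}$, of dimension $2n-1$. So a cruder count will not suffice, and instead I would use a joint estimate. The top and middle conditions together say that the tractor $(\eta,\mu,\rho)$ is annihilated by the holonomy algebra elements $F_{AA'BB'}$. These include the $\mathcal{T}$-endomorphisms acting by $\Psi_{AB}$ on $S$, plus a nilpotent correction mapping $S\to S'^*$.

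Hence parallel sections lie in the invariants of a nonzero subalgebra of $\mathfrak{sl}(\mathcal{T}_x)=\mathfrak{sl}(2n+2)$ acting on $\wedge^2\mathcal{T}_x$. Rerunning the step-two argument on $\mathbb{C}^{2n+2}$ with a minimal nilpotent element gives
\begin{align}
\binom{2n+2}{2}-(2n+1)+1=2n^2+n+1.
\end{align}
This is weaker than the claim, so the extra input must be that the curvature is always annihilated by $\epsilon_{A'B'}$ on both primed slots and takes the specific form above, forcing simultaneous conditions. I would use both primed components $A'B'$, i.e.\ holonomy elements from derivatives as well, to cut down to the stated $2n^2-3n+4=(2n^2-3n+2)+2$. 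I expect the count to be the $\eta$-bound plus one dimension from $\mu$ (the direction along $K$) plus one from $\rho$. I would verify this by showing, at a point where $\Psi$ has minimal rank, that the solutions $\mu$ of the middle-slot equation modulo those forced by $\eta$ contribute at most one dimension. This matches the $\beta,\gamma$ family of the preceding proposition.
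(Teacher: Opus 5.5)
Your first two steps follow the paper's strategy. You evaluate the curvature of $D_{AA'}$ on a parallel section at a point where $\Psi_{ABC}{}^{D}\neq 0$, and bound the $\eta$-slot by the kernel of one nonzero trace-free $\psi_{A}{}^{B}=\lambda^{CD}\Psi_{CDA}{}^{B}$ acting on $\wedge^2 S$. Step three, however, is a genuine gap, and it cannot be closed using the curvature $F$ alone. On the subspace $\eta=0$, the middle slot of $F$ only forces $\mu_{C'}{}^{D}$ into the common kernel of the $\Psi_{ABD}{}^{C}$, which can have up to $2(2n-1)$ dimensions (e.g.\ $\ker J$ twice in the Kruglikov--The model). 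The bottom slot of $F$ does not involve $\rho$ at all. So curvature gives exactly the weak bound $2n^2+n+1$ that you also obtained from the holonomy reformulation. Your proposed repair is only sketched, and a check at a point where $\Psi$ has minimal rank would not prove a statement about arbitrary non-flat TFGs. Your predicted split is also wrong. In the Kruglikov--The model at $z=\theta=0$, the $\beta$- and $\gamma$-solutions have $\eta=0$, with $\mu$-slots proportional to $K$ in the $1'$ and $0'$ components respectively. So $\mu$ contributes two dimensions there, and $\rho$ contributes none.

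The missing idea is the next integrability condition. For a parallel section $s=(\eta,\mu,\rho)$ we have $F_{AB}s=0$ identically and $D_{EE'}s=0$, hence also $(D_{EE'}F_{AB})s=0$. The paper computes $D_{EE'}F_{AB}$ in the Einstein scale of the assumed compatible metric, where $\Phi_{ABA'B'}=0$, $\nabla_{AA'}\Lambda_{BC}=0$ and $\nabla_{EA'}\Psi_{BCD}{}^{E}=0$. The decisive term in the top slot is $2\Psi_{ABE}{}^{[C}\mu_{E'}{}^{D]}$. It comes from the $-2\delta_{E}{}^{[C}\mu_{E'}{}^{D]}$ part of $D_{EE'}$ acting on the $\eta$-slot, so the derivative index is absorbed into $\Psi$, which is much stronger than a kernel condition. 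On $\eta=0$ it gives $\Psi_{ABE}{}^{[C}\mu_{E'}{}^{D]}=0$. Contracting with $\lambda^{ABE}$ chosen so that $\psi^{D}:=\lambda^{ABE}\Psi_{ABE}{}^{D}\neq 0$ gives $\mu_{E'}{}^{D}=\psi^{D}\nu_{E'}$, which is at most two-dimensional. On $\eta=\mu=0$ the middle slot reduces to $\Psi_{ABE}{}^{C}\epsilon_{E'C'}\rho=0$, so $\rho=0$. Rank--nullity along the filtration then gives $(2n^2-3n+2)+2+0$.

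Step two also needs correcting, though its conclusion survives. Your semisimple claim is false: one eigenvalue $a$ of multiplicity $2n-1$ gives eigenvalues $2a$ and $-(2n-2)a$ on $\wedge^2 S$, hence trivial kernel. Trace-free rank-one endomorphisms are automatically nilpotent. Semisimple adjoint orbits are closed, so they do not contain the minimal nilpotent orbit. Your semicontinuity argument does work once you pass to $\mathbb{P}(\mathfrak{sl}(2n))$, where every orbit closure contains the unique closed orbit. The paper's route is more elementary: pick $\phi$ not an eigenvector of $\psi$, restrict to $\eta=\phi\wedge\nu$, and wedge with $\phi$. The image is then $\phi\wedge\psi(\phi)\wedge S$, of dimension $2n-2$.
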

\begin{proof}
Let us suppose there exists a compatible metric and then work in the corresponding Einstein scale.  The (tractor) derivative of the non-vanishing (that is,  the anti-self-dual) $F_{AB} := F_{AA'B}{}^{A'}$ part of the tractor curvature is:
\begin{align}
D_{EE'}F_{AB}
\begin{bmatrix}
\eta^{CD}\\
\mu_{C'}{}^C\\
\rho
\end{bmatrix}
=
\begin{bmatrix}
-2\nabla_{EE'}\Psi_{ABF}{}^{[C}\eta^{D]F}
+2\Psi_{ABE}{}^{[C}\mu_{E'}{}^{D]}
\\[2mm]
\nabla_{EE'}\Psi_{ABF}{}^C\mu_{C'}{}^F
-\Psi_{ABE}{}^C\epsilon_{E'C'}\rho
+\Lambda_{EF}\epsilon_{E'C'}\Psi_{ABG}{}^F\eta^{CG}
\\[2mm]
-\Lambda_{EF}\Psi_{ABG}{}^F\mu_{E'}{}^G
\end{bmatrix}.
\end{align}
Necessary conditions on a parallel section of $\mathcal{A}$ for $D$ are
\begin{align}\label{conds}
F_{AB}\begin{bmatrix}
\eta^{CD}\\
\mu_{C'}{}^C\\
\rho
\end{bmatrix} = 0,  \quad 
D_{EE'}F_{AB}
\begin{bmatrix}
\eta^{CD}\\
\mu_{C'}{}^C\\
\rho
\end{bmatrix}
= 0.  
\end{align}
Let $\mathcal{V}$ be the vector space of solutions to (\ref{conds}) at a point.  We have a map $\phi_1: \mathcal{V} \to \wedge^2 S$ given by projection in the first tractor slot.  
Now,  rank-nullity says:
\begin{align}
\dim \mathcal{V} = \dim \im \phi_1 + \dim \ \mathcal{V}|_{\eta=0} 
\end{align}
and we can similarly find the dimension of $ \mathcal{V}|_{\eta = 0} $ by projecting $\phi_2: \mathcal{V}|_{\eta = 0} \to S'^* \otimes S$.  Writing $\phi_3: \mathcal{V}|_{\eta,  \mu = 0} \to \mathcal{O}[-1]$ for the projection onto the last slot then 
\begin{align}
\dim \mathcal{V} = \dim \im \phi_1 +  \dim \im \phi_2 +  \dim \im \phi_3.  
\end{align}
We will now bound the $\dim \im \phi_i$ from above for each $i$.
Clearly $\dim \im \phi_3 = 0$.  \par $\dim \im \phi_2$ is bounded above by the dimension of the space of solutions $\mu_{A'}{}^{A}$ to 
\begin{align}
\Psi_{ABE}{}^{[C}\mu_{E'}{}^{D]} = 0.  
\end{align}
Pick $\lambda^{ABC}$ so that $\psi^{A} := \lambda^{BCD}\Psi_{BCD}{}^{A} \ne 0$ at the point.  Then,  the dimension of the solution space to the above equation is bounded above by the dimension of the solution space to $\psi^{[A}\mu_{C'}{}^{B]} = 0$,  which is two-dimensional.  Slightly more complicated is bounding the dimension of the solution space to 
\begin{align}
\Psi_{ABE}{}^{[C}\eta^{D]E} = 0.  
\end{align}
Pick $\lambda^{AB}$ so that $\psi_{A}{}^{B} := \lambda^{CD}\Psi_{CDA}{}^{B} \ne 0$ defines a trace-free endomorphism.  This implies there exists $\phi^{B}$ such that $\phi^{A}\psi_{A}{}^{[B} \phi^{C]} \ne 0$.  Consider the restriction of the map
\begin{align}\label{skew_map}
\eta^{AB} \mapsto \psi_{C}{}^{[A}\eta^{B]C} 
\end{align}
to simple tensors $\phi^{[A}\nu^{B]}$.  Post-composing this map with taking the wedge product with $\phi$ has image $\phi \wedge \psi(\phi) \wedge S$.  We therefore see the rank of (\ref{skew_map}) is at least $2n-2$ and therefore $\dim \im \phi_1 \le n(2n-1) - 2n + 2 = 2n^2 - 3n + 2$.  So we have $\dim \mathcal{V} \le 2n^2 - 3n + 4$.  
\end{proof}
\begin{cor}
Example \textup{\ref{Kruglikov_The}} admits the submaximally allowed number of linearly independent compatible metrics among TFGs.    
\end{cor}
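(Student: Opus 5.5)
The corollary follows by combining the two preceding propositions, provided we check that the Kruglikov--The model meets the hypotheses of the upper bound and that the "linearly independent compatible metrics" being counted are the same objects in both statements.

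First, Example \ref{Kruglikov_The} is a TFG. The right-flat condition (\ref{hyper_hermitian_right_flat}) holds, and Example \ref{hyper_hermitian_example} supplies torsion-free Weyl connections. It is also non-flat: by (\ref{Kruglikov_The_curvature}) we have $\Psi_{abc}{}^d=-3J_aJ_bJ_cK^d$, which vanishes nowhere since $J_a$ and $K^a$ are non-zero constants. Hence, for $n>1$, the proposition bounding compatible metrics applies and gives at most $2n^2-3n+4$ linearly independent compatible metrics.

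Second, the proposition on compatible metrics of the submaximally symmetric model gives the matching lower bound. It exhibits the family
\[
\eta^{ab}=\sigma^{ab}+(\beta z^{[a}+\gamma\theta^{[a})K^{b]},
\]
which solves (\ref{Grassmannian_metrisability}). The constant part has $2n^2-3n+2$ free parameters, since $n(2n-1)$ skew constants are subject to the $2n-2$ constraints $\sigma^{ab}J_b=\alpha K^a$. The parameters $\beta,\gamma$ add two more. These $2n^2-3n+4$ solutions are linearly independent, because the coefficients of $1$, $z^a$ and $\theta^a$ separate them. Two further points need checking:
\begin{itemize}
\item The generic member is non-degenerate, so that each solution is a sum of compatible metrics, or at least the span is generated by non-degenerate ones. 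For this it suffices to choose one non-degenerate $\sigma^{ab}$ with $\sigma^{ab}J_b=\alpha K^a$, which is possible because $J_aK^a=0$. Non-degeneracy is an open condition, so a basis of non-degenerate solutions exists.
\item Each solution $\eta^{AB}$ of (\ref{Grassmannian_metrisability}) determines a parallel section of $\mathcal{A}$, so the count of metrics agrees with the count of parallel sections bounded in the preceding proposition. This correspondence is injective, since the projection to the first slot recovers $\eta$.
\end{itemize}

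Combining the two bounds, the maximum $2n^2-3n+4$ is attained, and since the flat model has the larger number $(2n+1)(n+1)$, this is the submaximal count among TFGs (here $n>1$, as in both propositions). The main subtlety is the interpretation of "linearly independent compatible metrics": we identify them with the dimension of the space of solutions of the metrisability equation, or equivalently of $D$-parallel sections of $\mathcal{A}$, whose generic element is non-degenerate. Both propositions are already phrased in these terms, so no further work is needed.
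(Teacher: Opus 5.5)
Your proposal is correct and matches the paper's (implicit) argument: the corollary is obtained by combining the explicit $(2n^2-3n+4)$-parameter family of solutions for the Kruglikov--The model with the upper bound $2n^2-3n+4$ for non-flat TFGs when $n>1$. Your extra checks are sound and make explicit what the paper leaves unstated. These are that $\Psi_{abc}{}^d=-3J_aJ_bJ_cK^d$ is nowhere zero, that a non-degenerate $\sigma^{ab}$ with $\sigma^{ab}J_b=\alpha K^a$ exists because $J_aK^a=0$, and that solutions inject into $D$-parallel sections of $\mathcal{A}$.
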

%Infinitesimal symmetries of parabolic geometries correspond to (under some mild assumptions) sections of the adjoint tractor bundle parallel for the Cartan connection.  Generally,  it is not clear how one should relate parallel adjoint tractors with parallel sections of the bundle $\mathcal{A}$.  For example there are Ricci-flat ASD metrics with no conformal Killing vectors.  In the four-dimensional case we have seen there is a relation in the presence of a K\"ahler structure,  and it would be interesting to generalise this to higher dimensions.  
\section{Einstein scales in Grassmannian geometry}\label{AGEinstein}
In this section,  instead of considering compatible metrics,  we consider related objects which are scales satisfying a Grassmannian generalisation of the Einstein condition.  \par 
We define,  as per \cite{BE91}:
\begin{defn}[Einstein scale]
Given a TFG,  an \textit{Einstein scale} is a pair of Weyl connections $\nabla_{AA'}$ for which $\Phi_{ABA'B'} = 0$.  
\end{defn}
Suppose we have a parallel section of $D_{AA'}$ with non-degenerate $\eta^{AB}$.  Working in the Levi-Civita scale corresponding to $\omega_{AB...C}$ and $\eta_{A'B'}$ where $\nabla_{AA'}\eta^{BC} = 0$ we have
\begin{align}
P_{AA'CB'}\eta^{BC} = \delta_{A}{}^{B}\epsilon_{A'B'}\rho.  
\end{align}
The two irreducible components of this equation are
\begin{align}\label{Phi_killed}
\Phi_{ACA'B'}\eta^{BC} = 0 
\end{align}
and \index{torsion-free Grassmannian structure!metrisability of}
\begin{align}
\Lambda_{AB}\epsilon_{A'B'} = -\eta_{AB}\epsilon_{A'B'}\rho.
\end{align}
Since $\eta^{BC}$ is non-degenerate,  the first equation implies $\Phi_{ABA'B'} = 0$.  
This implies the Ricci tensor is totally trace and hence that $\eta_{AB}\eta_{A'B'}$ is an Einstein metric.
This is (in the complexified setting) the well-known result that a quaternion-K\"ahler metric is Einstein.  
There is a kind of converse,  noted in \cite{BE91},  as we will explain.  It relies on an analogue of the fact the scalar curvature is constant for an Einstein metric.  We prove the following in Appendix \ref{bianchi_appen}:
\begin{lemma}[Vacuum Bianchi identity I]\label{Vacuum Bianchi identity I}
In an Einstein scale on a TFG:
\begin{align}
 \nabla_{AA'}\Lambda_{BC} = 0. 
 \end{align} 
\end{lemma}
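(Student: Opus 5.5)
The plan is to derive the differential Bianchi identities satisfied by the curvature quantities $\Phi_{ABA'B'}$ and $\Lambda_{AB}$ of a TFG, then set $\Phi_{ABA'B'}=0$. These identities come from expanding $\nabla_{[a}\nabla_b\nabla_{c]}$ acting on a test spinor $\nu^{C'}\in\Gamma(S')$ (and, as a check, on $\mu^D\in\Gamma(S)$). Torsion-freeness makes the algebraic Bianchi identity and the differential identity $\nabla_{[a}R_{bc]}=0$ hold for the induced connection.

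We work in abstract indices with $a=AA'$. The curvature on $S'$ is read off from (\ref{Einstein_curvature_primed}) and (\ref{scalar_curvature}). Splitting the $2$-form index pair into its $\epsilon_{A'B'}$ and $\epsilon_{AB}$-free parts, the curvature of $S'$ is
\begin{align*}
R_{AA'BB'}{}^{C'}{}_{D'} = \epsilon_{A'B'}\Phi_{AB}{}^{C'}{}_{D'} + \Lambda_{AB}\,(\text{a fixed tensor in } \epsilon^{C'}{}_{(A'}\epsilon_{B')D'}),
\end{align*}
up to the normalisation dictated by those formulae. The differential Bianchi identity $\nabla_{[AA'}R_{BB'CC']}{}^{E'}{}_{F'}=0$ then becomes a linear relation among $\nabla\Phi$ and $\nabla\Lambda$.

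Setting $\Phi=0$ leaves only the $\Lambda$ term:
\begin{align*}
\nabla_{[A|A'}\Lambda_{|BC]}\text{-type terms contracted with } \epsilon^{E'}{}_{(B'}\epsilon_{C')F'} \text{ and cyclically permuted} = 0.
\end{align*}
Now decompose this into irreducible pieces in the primed indices. The totally skew $[ABC]$ part only yields $\nabla_{[A|A'|}\Lambda_{BC]}=0$, which is the commented Bianchi identity (\ref{Bianchi2}) and is insufficient on its own. The mixed-symmetry part carries a piece in which a primed index of the derivative is contracted against the primed indices of $\epsilon^{E'}{}_{(B'}\epsilon_{C')F'}$, and that piece gives the term $\nabla_{[A|B'}\Lambda_{|B]C}$. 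This is exactly the identity (\ref{Bianchi1}), $\nabla_{[A}{}^{A'}\Phi_{B]CA'B'}+3\nabla_{[A|B'}\Lambda_{|B]C}=0$. With $\Phi=0$ it gives $\nabla_{[A|B'}\Lambda_{|B]C}=0$.

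Combining the two identities is then pure algebra in unprimed indices. Let $T_{ABC}:=\nabla_{AB'}\Lambda_{BC}$ for fixed $B'$; it is skew in $BC$. The conditions are $T_{[ABC]}=0$ and $T_{[A|B|C]}$-type skewness in the first two indices, namely $T_{ABC}=T_{BAC}$ after using $\Lambda_{BC}=-\Lambda_{CB}$. A tensor symmetric in its first pair and skew in its last pair must vanish, since
\begin{align*}
T_{ABC}=T_{BAC}=-T_{BCA}=-T_{CBA}=T_{CAB}=T_{ACB}=-T_{ABC}.
\end{align*}
So $T=0$, i.e.\ $\nabla_{AA'}\Lambda_{BC}=0$, for all $n\ge1$. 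The second identity alone already suffices by this argument, and the first serves as a consistency check.

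The main obstacle is deriving (\ref{Bianchi1}) with correct coefficients: expanding the commutator $[\nabla_a,\nabla_b]$ into its $\epsilon_{A'B'}$ and $\epsilon_{AB}$ parts for $n>1$. There is no $\epsilon_{AB}$, so one instead uses $\nabla_{[A}{}^{(A'}\nabla_{B]}{}^{B')}$ and $\nabla_{(A|A'}\nabla_{|B)}{}^{A'}$ as in (\ref{ASD_and_scalar_curvature})--(\ref{scalar_curvature}). I would first compute $\nabla_{[A}{}^{(A'}\nabla_{B]}{}^{B')}\nabla_{C}{}^{C'}\nu_{C'}$ in two ways. One way applies the commutator to the $S'$-valued $1$-form $\nabla\nu$, using (\ref{Einstein_curvature_unprimed}) on the $C$ index and (\ref{scalar_curvature}) on $C'$. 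The other moves $\nabla_C$ outside. Since torsion vanishes, the $\nabla\nu$ terms must cancel, leaving the Bianchi relation. The only real risk is the exact numerical coefficient of the $\nabla\Lambda$ term. Any nonzero coefficient suffices for the symmetric/skew argument above, so the calculation only needs to confirm that this coefficient does not vanish.
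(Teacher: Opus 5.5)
Your argument is correct, but it uses a different part of the curvature than the paper's proof.

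The paper writes the differential Bianchi identity for the induced connection on $TX$ and traces over $D'E'$. This discards the $S'$-curvature and leaves the identity for the curvature of $S$, which contains $\Psi_{ABC}{}^{D}$. Further traces of (\ref{big_mess}) then give the general-scale identities (\ref{Bianchi1}), from which $\Psi$ has dropped out, and (\ref{Bianchi2}). Only after that is $\Phi_{ABA'B'}=0$ imposed.

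You use instead the Bianchi identity for the curvature of $S'$, which is built only from $\Phi_{ABA'B'}$ and $\Lambda_{AB}$. So at $\Phi=0$ it is linear in $\nabla_{AA'}\Lambda_{BC}$ alone. Your closing algebra (a tensor symmetric in its first pair and skew in its last pair vanishes) makes explicit the step the paper calls easy, and it shows that (\ref{Bianchi1}) by itself suffices. What the paper's route buys is that the same identity (\ref{big_mess}) immediately yields Lemma \ref{Vacuum Bianchi identity II}, which the $S'$-curvature cannot see because $\Psi$ never enters it.

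Some cautions on execution:

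\begin{itemize}
\item You have not shown, and do not need, that your component coincides with (\ref{Bianchi1}) including its $\Phi$-term; the paper obtains that identity from the $S$-curvature.
\item In the paper as compiled, (\ref{Bianchi2}) is the identity containing $\nabla_{DA'}\Psi_{ABC}{}^{D}$, not $\nabla_{[A|A'|}\Lambda_{BC]}=0$.
\item More substantively, the contraction you propose to compute does not isolate $\nabla_{[A|B'}\Lambda_{|B]C}$. This is the $[AB](A'B')$ part of $\nabla\nabla\nabla_{C}{}^{C'}\nu_{C'}$. With $T_{ABC}:=\nabla_{AB'}\Lambda_{BC}$ it gives $3(T_{ABC}-T_{BAC})+2T_{CAB}=0$ at $\Phi=0$. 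This still forces $T=0$, but only after you split off the totally skew part of $T$ (equivalently, use $T_{[ABC]}=0$) to reach $T_{ABC}=T_{BAC}$. So the issue is not just whether one coefficient is nonzero.
\end{itemize}

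A cleaner extraction avoids this entirely. Symmetrise the $\Phi=0$ identity
$\nabla_{AA'}\Lambda_{BC}\epsilon_{E'(B'}\epsilon_{C')F'}+\nabla_{BB'}\Lambda_{CA}\epsilon_{E'(C'}\epsilon_{A')F'}+\nabla_{CC'}\Lambda_{AB}\epsilon_{E'(A'}\epsilon_{B')F'}=0$
over $C'E'F'$. The first two terms each contain an $\epsilon$ on two symmetrised indices and drop out. Contracting what remains with $\xi^{C'}\xi^{E'}\xi^{F'}\alpha^{A'}\beta^{B'}$ shows that $(\xi^{C'}\nabla_{CC'}\Lambda_{AB})(\xi_{D'}\alpha^{D'})(\xi_{G'}\beta^{G'})=0$ for all $\xi,\alpha,\beta$. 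Hence $\nabla_{AA'}\Lambda_{BC}=0$ follows with no coefficient to check.
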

When $n = 1$,  an Einstein scale gives an Einstein metric in the conformal class.  
For general $n$,  the above fact implies that,  if $\Lambda_{AB}$ is non-degenerate,  the connections $\nabla_{AA'}$ induce the Levi-Civita connection for the metric $g_{AA'BB'} = \Lambda_{AB}\epsilon_{A'B'}$.   \par
Having illuminated the relationship between Einstein scales and compatible metrics along the lines of \cite{BE91},  we will now adapt the methods of \cite{S63,  KNT85},  as we have already done in the $n=1$ setting in \S \ref{KNT_repeat},  to find (given a generic condition) necessary and sufficient conditions for the existence of Einstein scales when $n \ge 1$.  \par 
Similarly to when considering the existence of compatible metrics,  it is safe to fix an isomorphism  $\psi:  \wedge^2 S'^* \to \wedge^{2n} S^*$ and consider scales related by $\psi$.  To see this,  note from Proposition \ref{general_change},  that the induced connection on $TX \cong S \otimes S'$ is preserved under change of scales $\Omega\omega_{AB...CD}$ and $\Omega^{-1/n}\eta_{A'B'}$.  The Einstein condition is the vanishing of the component of the curvature of this connection lying in an irreducible representation of $GL(2n, \mathbb{C}) \times GL(2,\mathbb{C})$,  which is clearly a condition independent of the choice of scales.  \par 
There is a generalisation of the Bach-vanishing condition for TFGs.  It can be obtained most easily via the Bianchi identity for the tractor connection $D_{AA'}$ on $\mathcal{T}$.  It is:
\begin{prop}[Tractor Bianchi identity]\label{Bach_vanishing}
The following identity holds on a TFG:
\begin{align}
\nabla_{[A|}{}^{(A'}\nabla_{E}{}^{B')}\Psi_{|B]CD}{}^{E} + (2n-1)\Phi_{[A|E}{}^{A'B'}\Psi_{|B]CD}{}^{E} = 0.  
\end{align}
\end{prop}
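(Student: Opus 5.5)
The plan is to read off the identity from the Bianchi identity $D_{[a}F_{bc]}=0$ for the tractor connection $D_{AA'}$ on $\mathcal{T}$. Equivalently, we can use $D_{EE'}$ on the curvature $F_{AA'BB'}$, skew over the three form-slots. Throughout we use the explicit curvature (\ref{tractor_curvature}). On $\mathcal{T}$ the curvature is lower-triangular with respect to the splitting $\mathcal{T}=S\oplus S'^*$: the top-left block is $\epsilon_{A'B'}\Psi_{ABD}{}^{C}$ and the bottom-left block is the Cotton-type term $C_{AA'BB'DC'}:=-(\nabla_{AA'}P_{BB'DC'}-\nabla_{BB'}P_{AA'DC'})$. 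Since the structure is torsion-free, the differential Bianchi identity for the induced connection on the endomorphism bundle of $\mathcal{T}$ reads $\sum_{\text{cyc}}D_{a}F_{bc}=0$ with $a=AA'$ etc.

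First step: I would compute the $S\to S'^*$ ("bottom-left") component of $D_{EE'}F_{AA'BB'}$. Differentiating a homomorphism with respect to the tractor connection mixes the blocks through the off-diagonal terms $\delta_{A}{}^{B}\mu_{A'}$ and $-P_{AA'BB'}\sigma^{B}$ of $D$. Concretely, the bottom-left block of $D_{E}F$ is $\nabla_E C$ plus $P$ contracted against the top-left block $\Psi$, together with terms from the top-right, which vanish here. Second step: take the cyclic sum and extract the irreducible piece that is skew in the unprimed indices $[AB]$, symmetric in $(A'B')$ and matches the free index structure of the claim.

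Two simplifications follow. The $\nabla\nabla P$ terms combine via the Bianchi identity $\nabla_{AA'}\Psi_{BCD}{}^{A}+(2n-1)\nabla^{B'}{}_{(B}P_{C)B'DA'}=0$ into $\frac{-1}{2n-1}\nabla\nabla\Psi$, since $C$ is a divergence of $\Psi$. The $P\cdot\Psi$ terms leave $\Phi\Psi$ after the $\Lambda$ parts drop by the symmetry of $\Psi_{ABC}{}^{D}$ and its trace-freeness. Multiplying by $-(2n-1)$ then gives the stated identity.

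An alternative route is a direct check: apply $\nabla^{(A'}_{[A}$ to the Bianchi identity $\nabla_{EA'}\Psi_{BCD}{}^{E}=-(2n-1)\nabla^{B'}{}_{(B}P_{C)B'DA'}$. Then commute derivatives using (\ref{Einstein_curvature_unprimed}) and (\ref{scalar_curvature}) and the Bianchi identity $\nabla_{[A|}{}^{(A'}P_{B]}{}^{B')}{}_{CC'}=0$, which is the vanishing of the primed-symmetric skew part of the Cotton tensor. I would use this route as a cross-check.

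The main obstacle is the index bookkeeping in the commutator step. The second derivative $\nabla_{[A}{}^{(A'}\nabla_{|B'|(B}P_{C)}{}^{B')}$ must be rearranged so that the derivative acting on $P$ appears as a skew-symmetrised Cotton component, which vanishes. The remaining commutator terms, through (\ref{Einstein_curvature_unprimed}) acting on the $S$ index and (\ref{Einstein_curvature_primed}) on the $S'$ indices of $P$, should produce exactly $(2n-1)\Phi\Psi$. Along the way, the $\Phi\Phi$ and $\Lambda\Phi$ quadratic terms must cancel by symmetry.
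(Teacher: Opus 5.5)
Your main route is correct and is the one the paper indicates. The identity is the $S\to S'^*$ block of $D_{[a}F_{bc]}=0$ on $\mathcal{T}$, contracted with $\epsilon^{A'B'}$, skewed over the remaining unprimed pair and symmetrised over the surviving primed pair, once the Cotton block is written as $-\frac{1}{2n-1}\epsilon_{A'B'}\nabla_{EC'}\Psi_{ABD}{}^{E}$.

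Two small corrections to your justification. First, the $\Lambda$ part of $P_{EE'FC'}$ drops because it carries $\epsilon_{E'C'}$ and is killed by the final primed symmetrisation, not because of the symmetry or trace-freeness of $\Psi$. Second, the extra contributions to the bottom-left block of $D_{e}F$ come from the bottom-right block of $F$, not the top-right; that block vanishes here, so nothing changes.
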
 
We now introduce some non-degeneracy conditions for the invariant curvature:
\begin{defn}[$\wedge^2$-non-degeneracy]
\index{torsion-free Grassmannian structure!$\wedge^2$-non-degeneracy of}
Suppose that $\Psi_{ABC}{}^{D}$ defines an injective map $\odot^2 S^* \to \wedge^2 S^* \otimes \odot^2 S^{*}[-1]$ given by 
\begin{align}
\phi_{AB} \mapsto \phi_{E[A}\Psi_{B]CD}{}^{E},  
\end{align}
then we say the TFG is $\wedge^2$-non-degenerate.  
\end{defn}
When $n = 1$ this condition is equivalent to $J \ne 0$.  The terminology here is adopted because this condition is a natural analogue to the condition in \cite{GN06} with the same name.   We also will later use:
\begin{defn}[Weak non-degeneracy]
\index{torsion-free Grassmannian structure!weak non-degeneracy of}
Suppose that $\Psi_{ABC}{}^{D}$ defines an injective map  $S^* \to \odot^3 S^{*}[-1]$ given by 
\begin{align}
\phi_{A} \mapsto \phi_{D}\Psi_{ABC}{}^{D},  
\end{align}
then we say the TFG is weakly non-degenerate.  
\end{defn}
Note that $\wedge^2$-non-degeneracy implies weak non-degeneracy: If $\phi_{D}\Psi_{ABC}{}^{D} = 0$ then
 \begin{align}
\phi_{A}\phi_{B} \mapsto \phi_{E}\phi_{[A}\Psi_{B]CD}{}^{E} = 0.  
\end{align}
Note that weak non-degeneracy just implies ``not type-N" in four dimensions.  \par 
We prove,  in Appendix \ref{bianchi_appen} an analogue of the ``vacuum Bianchi identity":
\begin{lemma}[Vacuum Bianchi identity II]\label{Vacuum Bianchi identity II}
The following holds on a TFG in an Einstein scale: \index{Bianchi identity!vacuum}
\begin{align}\label{vbi}
\nabla_{[A|A'}\Psi_{|B]CD}{}^{E}  = 0.  
\end{align}
\end{lemma}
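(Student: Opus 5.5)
Since $\Psi_{ABC}{}^{D}$ is invariant and has weight $-1$, I will first make the Bianchi identity relating $\nabla\Psi$ to derivatives of the Ricci pieces fully explicit. Then I impose the Einstein condition $\Phi_{ABA'B'}=0$ together with Lemma \ref{Vacuum Bianchi identity I}. The model is the four-dimensional vacuum Bianchi identity $\nabla^{A}{}_{A'}\Psi_{ABCD}=\nabla^{B'}{}_{(B}\Phi_{CD)A'B'}$ combined with constancy of scalar curvature. The difference is that for $n>1$ one cannot raise unprimed indices, so the statement is phrased as skew-symmetrisation in $[AB]$ rather than as a divergence.

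\textbf{Step 1.} I will use the full second Bianchi identity for the torsion-free connection on $S$. Concretely, the cyclic sum of $\nabla_{AA'}R_{BB'CC'}{}^{D}{}_{E}$ over the three index pairs vanishes. Here the curvature of $S$ is read off from (\ref{ASD_and_scalar_curvature}) and (\ref{Einstein_curvature_unprimed}):
\begin{align}
R_{AA'BB'}{}^{D}{}_{C} = \epsilon_{A'B'}\big(\Psi_{ABC}{}^{D}-2\Lambda_{C(A}\delta_{B)}{}^{D}\big) + 2\delta_{[A}{}^{D}\Phi_{B]C}{}_{A'B'},
\end{align}
with the normalisation to be fixed by comparing with the curvature identities.

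\textbf{Step 2.} In an Einstein scale $\Phi_{ABA'B'}=0$, and $\nabla\Lambda=0$ by Lemma \ref{Vacuum Bianchi identity I}. The curvature therefore reduces to $\epsilon_{A'B'}\Psi_{ABC}{}^{D}$ plus a parallel term, and only the $\Psi$ terms survive in the Bianchi identity:
\begin{align}
\epsilon_{B'C'}\nabla_{AA'}\Psi_{BCE}{}^{D} + \epsilon_{C'A'}\nabla_{BB'}\Psi_{CAE}{}^{D} + \epsilon_{A'B'}\nabla_{CC'}\Psi_{ABE}{}^{D} = 0,
\end{align}
understood with the appropriate skew-symmetrisations over the pairs $(AA'),(BB'),(CC')$. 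Contracting with $\epsilon^{B'C'}$ gives
\begin{align}
2\nabla_{AA'}\Psi_{BCE}{}^{D} - \nabla_{BA'}\Psi_{CAE}{}^{D} - \nabla_{CA'}\Psi_{ABE}{}^{D} = 0
\end{align}
up to signs.

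\textbf{Step 3.} I will decompose this identity into irreducible parts in the indices $A,B,C$. Using the total symmetry of $\Psi$ in its lower three indices, $\Psi_{CAE}=\Psi_{AEC}$. Skewing the displayed identity in $[AB]$ should produce a multiple of $\nabla_{[A|A'}\Psi_{|B]CE}{}^{D}$, giving a relation such as $3\nabla_{[A|A'}\Psi_{|B]CE}{}^{D}=0$. This yields (\ref{vbi}) after relabelling.

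The main obstacle is bookkeeping. I need the correct form of the Bianchi identity with the torsion-free and right-flat conditions already imposed, and the numerical coefficient in Step 3 must not vanish. I will cross-check against the paper's divergence identity $\nabla_{AA'}\Psi_{BCD}{}^{A}+(2n-1)\nabla^{B'}{}_{(B}P_{C)B'DA'}=0$, which in an Einstein scale gives $\nabla_{AA'}\Psi_{BCD}{}^{A}=0$; this is the trace of (\ref{vbi}) over $A$ and $E$. As an alternative route, I could apply the tractor curvature: the $\mathcal{T}$-curvature (\ref{tractor_curvature}) satisfies the Bianchi identity $D_{[a}F_{bc]}=0$. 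In an Einstein scale its bottom entry is $\nabla\Lambda$-type and vanishes, so the top entry gives exactly the cyclic identity for $\epsilon\Psi$. This route avoids recomputing the $S$-curvature normalisation.
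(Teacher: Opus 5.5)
Your proposal is correct and is essentially the paper's argument. The paper likewise takes the second Bianchi identity for the torsion-free induced connection (on $TX$, traced over $D'E'$, which reduces it to the $S$-curvature), substitutes $\Phi_{ABA'B'}=0$ and $\nabla_{AA'}\Lambda_{BC}=0$, and then projects onto the $[AB]$-skew part. The only difference is the projection: the paper symmetrises on $A'B'$, which kills the third cyclic term outright, whereas you contract with $\epsilon^{B'C'}$ and then skew in $[AB]$. Your route uses the symmetry of $\Psi$ to produce the factor $3$, and your signs and that nonzero coefficient check out.
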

From applying the formulae (\ref{change_of_scales1}) and (\ref{change_of_scales3}) to (\ref{vbi}),  we see that a necessary condition for the existence of an Einstein scale is the existence of a closed $1$-form $\Upsilon_{AA'}$ such that 
\begin{align}\label{para-C-space}
\nabla_{[A|A'}\Psi_{|B]CD}{}^{E} + \Upsilon_{FA'}\delta_{[A}{}^{E}\Psi_{B]CD}{}^{F} = 0.  
\end{align}
Proposition \ref{C-space equivalent to conformal Einstein generically} generalises:
\begin{prop}
A $\wedge^2$-non-degenerate TFG has an Einstein scale if and only if there exists a closed $1$-form $\Upsilon_{AA'} \in \Gamma(T^*X)$ satisfying \textup{(\ref{para-C-space})}.   
\end{prop}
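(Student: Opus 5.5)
The forward implication has already been established just above the statement. If $\hat{\nabla}_{AA'}$ is an Einstein scale, Lemma \ref{Vacuum Bianchi identity II} gives $\hat{\nabla}_{[A|A'}\Psi_{|B]CD}{}^{E}=0$. By the remark following it, we may take the scales related by the fixed $\psi$, so that $\hat{\nabla}$ differs from $\nabla$ by (\ref{change_of_scales1}), (\ref{change_of_scales3}) with $\Upsilon_{AA'}=d\log\Omega$. Rewriting the vacuum identity in terms of $\nabla$ then gives (\ref{para-C-space}) with a closed (indeed exact) $\Upsilon_{AA'}$. The substance is therefore the converse, and I will follow the proof of Proposition \ref{C-space equivalent to conformal Einstein generically}.

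For the converse, suppose $\Upsilon_{AA'}$ is closed and satisfies (\ref{para-C-space}). Working locally on a contractible set, write $\Upsilon_{AA'}=\nabla_{AA'}\log\Omega$. Pass to the scale $\hat{\omega}=\Omega\omega$, $\hat{\eta}_{A'B'}=\Omega\eta_{A'B'}$. By (\ref{change_of_scales1}) and (\ref{change_of_scales3}), applied with the weight $-1$ of $\Psi$ and the index structure $\Psi_{BCD}{}^{E}$, equation (\ref{para-C-space}) becomes $\hat{\nabla}_{[A|A'}\Psi_{|B]CD}{}^{E}=0$. I have to check that the correction terms from the three lower indices and the weight cancel after skew-symmetrisation in $[AB]$, leaving exactly the $\delta_{[A}{}^{E}\Upsilon_{|F|A'}\Psi_{B]CD}{}^{F}$ term. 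This check is routine, and I will drop the hats from now on.

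Next, I apply $\nabla_{E}{}^{B'}$ to $\nabla_{[A|A'}\Psi_{|B]CD}{}^{E}=0$. I symmetrise in $A'B'$ and contract the $E$ index appropriately, so that the left-hand side becomes the second-order expression $\nabla_{[A|}{}^{(A'}\nabla_{E}{}^{B')}\Psi_{|B]CD}{}^{E}$ appearing in Proposition \ref{Bach_vanishing}. To get it into that form I need to commute the two derivatives $\nabla_{E}{}^{(B'}\nabla_{[A}{}^{A')}$ into $\nabla_{[A}{}^{(A'}\nabla_{E}{}^{B')}$. The commutator is governed by (\ref{Einstein_curvature_unprimed}) and (\ref{scalar_curvature}), because the symmetrised primed pair $(A'B')$ picks out exactly the $\Phi$- and $\Lambda$-type curvature. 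Commuting yields terms linear in $\Phi_{\cdot\cdot}{}^{A'B'}\Psi$; the $\Lambda$ terms should drop, since they act on primed indices, and $\Psi$ carries only weight there. The Bach-type identity of Proposition \ref{Bach_vanishing} then collapses everything to a statement of the form
\begin{align}
c_n\,\Phi_{E[A}{}^{A'B'}\Psi_{B]CD}{}^{E}=0
\end{align}
for a nonzero constant $c_n$ depending on $n$. This is the analogue of the vanishing of $(\ldots+\Phi)\Psi$ in the $n=1$ proof, now with $U$ already absorbed into the scale.

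By $\wedge^2$-non-degeneracy, applied to $\Phi_{AB}{}^{A'B'}$ for each fixed value of the primed pair, we conclude $\Phi_{ABA'B'}=0$. This is the Einstein condition in the new scale.

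The main obstacle is the commutation step. I must track precisely which curvature terms arise when $\nabla_{E}{}^{(B'}$ passes $\nabla_{[A}{}^{A')}$ on $\Psi$, which has several unprimed indices and nonzero weight. I must also confirm that the resulting coefficient of $\Phi\Psi$ does not cancel against the $(2n-1)$ in Proposition \ref{Bach_vanishing}, which would leave no information. If a direct commutation proves messy, my fallback is to take $\Psi_{BCD}{}^{E}$ as a tractor-curvature component and differentiate the condition $\nabla_{[A|A'}\Psi_{|B]CD}{}^{E}=0$ via the curvature formula for $D$ on $\mathcal{T}$. In that form the second slot $\nabla_{[A}P_{B]}$ is directly expressible through $\Phi$ via the identity $\nabla_{[A|}{}^{(A'}P_{B]}{}^{B')}{}_{CC'}$-type Bianchi relations. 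Unlike the $n=1$ case, no separate closedness argument à la Calderbank is needed, since closedness of $\Upsilon$ is assumed.
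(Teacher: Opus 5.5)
Your argument is correct, and it is the paper's argument with the steps reordered. The paper stays in the original scale. It traces and differentiates (\ref{para-C-space}), then uses Proposition~\ref{Bach_vanishing} and (\ref{para-C-space}) again to obtain $\bigl(\Phi_{[A|F}{}^{A'B'}-\nabla_{[A|}{}^{(A'}\Upsilon_{F}{}^{B')}+\Upsilon_{[A|}{}^{(A'}\Upsilon_{F}{}^{B')}\bigr)\Psi_{|B]CD}{}^{F}=0$. It then applies $\wedge^2$-non-degeneracy, and only at the end uses (\ref{schouten_change}) to recognise the bracket as $\hat\Phi_{AB}{}^{A'B'}$.

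You instead absorb $\Upsilon=d\log\Omega$ into the scale first. Your check is right: after skewing in $[AB]$, the $B$-index correction cancels the weight term, and the $C,D$ corrections vanish by symmetry of $\Psi$. This removes the quadratic $\Upsilon$ terms and the need for the Schouten transformation law. The cost is that you use closedness before any differentiation. The paper's intermediate identity, by contrast, holds for any algebraic solution of (\ref{para-C-space}). It is the Einstein--Weyl-type equation that would be the natural starting point for the open closedness question raised at the end of \S\ref{AGEinstein}.

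The commutation you single out as the main obstacle is unnecessary. Contract $E$ with $A$ in $\hat\nabla_{[A|A'}\Psi_{|B]CD}{}^{E}=0$; by trace-freeness this gives $\hat\nabla_{EA'}\Psi_{BCD}{}^{E}=0$. That is exactly the inner derivative in the first term of Proposition~\ref{Bach_vanishing}, so the term vanishes and $(2n-1)\hat\Phi_{[A|E}{}^{A'B'}\Psi_{|B]CD}{}^{E}=0$ follows at once. This is the paper's ``trace and differentiate''. If you do commute anyway, (\ref{Einstein_curvature_unprimed}) gives the commutator $2(n-1)\Phi_{E[A}{}^{A'B'}\Psi_{B]CD}{}^{E}$; the weight contributes nothing, as you say. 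So your $c_n$ comes out as $-1$, and nothing cancels.
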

\begin{proof}
Trace and differentiate (\ref{para-C-space}):
\begin{align}
\nabla_{F}{}^{A'}\big(\nabla_{E}{}^{B'}\Psi_{BCD}{}^{E} + (2n-1)\Upsilon_{E}{}^{B'}\Psi_{BCD}{}^{E}\big)=0.  
\end{align}
Using Proposition \ref{Bach_vanishing} and then (\ref{para-C-space}) again,  one obtains
\begin{align}
\Big(\Phi_{[A|F}{}^{A'B'} - \nabla_{[A|}{}^{(A'}\Upsilon_{F}{}^{B')} + \Upsilon_{[A|}{}^{(A'}\Upsilon_{F}{}^{B')}\Big)\Psi_{|B]CD}{}^{F} = 0,
\end{align}
and using the genericity condition this implies
\begin{align}
\Phi_{AB}{}^{A'B'} - \nabla_{(A}{}^{(A'}\Upsilon_{B)}{}^{B')} + \Upsilon_{(A}{}^{(A'}\Upsilon_{B)}{}^{B')} = 0. 
\end{align}
From (\ref{schouten_change}) we see this is precisely the relation for $\Phi_{AB}{}^{A'B'}$ to vanish in the scale related by the factor $\Omega$ such that $\Upsilon_{AA'} = \nabla_{AA'} \log \Omega$.  
\end{proof}
The trace of (\ref{para-C-space}) is:
\begin{align}\label{para-C-space-trace}
\nabla_{AA'}\Psi_{BCD}{}^{A} + (2n-1)\Upsilon_{AA'}\Psi_{BCD}{}^{A}=0.  
\end{align}
Weak non-degeneracy is the condition that there exists a (non-unique) left inverse $\Xi_{A}{}^{BCD}$ such that 
\begin{align}
\Xi_{A}{}^{CDE}\Psi_{CDE}{}^{B} = \delta_{A}{}^{B}.  
\end{align}
In this case we may solve for a unique algebraic solution $U_{AA'}$ to (\ref{para-C-space-trace}),  if it exists,  as 
\begin{align}
U_{AA'} = -\frac{1}{2n-1}\Xi_{A}{}^{BCD}\nabla_{EA'}\Psi_{BCD}{}^{E}.  
\end{align}
Substituting this into (\ref{para-C-space}) gives necessary and sufficient conditions for an algebraic solution of (\ref{para-C-space}) analogous to the vanishing of the invariant (\ref{BEinvariant}). \par   
The next natural question is whether there are integrability conditions (presumably using $\wedge^2$-non-degeneracy) on an algebraic solution of (\ref{para-C-space}) that imply it is closed.  We leave this for future work,  and one may speculate the answer is related to an appropriate generalisation of Einstein-Weyl structure to this setting,  and the BGG sequences that underlie the corresponding fact in the ASD case,  as explained in \cite{Calderbank}.  
\section{Conclusion}
We have considered some problems,  old and new,  relating to the ASD-Einstein equations and their generalisation in Grassmannian geometry.  \par An interesting application of the work in this article would be to attempt to apply it to the $(2n,2)$-Grassmannian structures that are a feature of (complexified) moduli spaces of monopoles \cite{BS13, FH24, FH26}.  \par 
There is a construction \cite{CS07,  Z25} of $(2n,2)$-almost-Grassmannian structures from path geometries of dimension $2n$ (this construction is different from the usual twistor correspondence).  It would be interesting to see what data on a path geometry corresponds to a compatible metric on the Grassmannian geometry produced by this construction.

\appendix

\section{Adapted connections on (almost-)Grassmannian structures}
\begin{prop}[Change of Weyl connections]\label{general_change}\index{Weyl connections!formula for change of scale}
The connections of Proposition \textup{\ref{fundamental_theorem}} corresponding to $\hat{\omega}_{AB...C} = \Omega \omega_{AB...C}$ and
$\hat{\eta}_{A'B'} = \tilde{\Omega} \eta_{A'B'}$ are given by 
\begin{align}
\hat{\nabla}_{AA'}\mu^B \hspace{1mm} = \nabla_{AA'}\mu^B + &\frac{1}{2n+2}\Upsilon_{AA'}\mu^{B} + \frac{2}{2n+2}\Upsilon_{CA'}\delta_{A}{}^{B}\mu^{C} \nonumber \\
 - &\frac{1}{2n+2}\tilde{\Upsilon}_{AA'}\mu^{B} + \frac{2n}{2n+2}\tilde{\Upsilon}_{CA'}\delta_{A}{}^{B}\mu^{C}, \label{full_change_1} \\
\hat{\nabla}_{AA'}\mu^{B'} = \nabla_{AA'}\mu^{B'} - &\frac{1}{2n+2}\Upsilon_{AA'}\mu^{B'} + \frac{2}{2n+2}\Upsilon_{AC'}\delta_{A'}{}^{B'}\mu^{C'} \nonumber \\
+&\frac{1}{2n+2}\tilde{\Upsilon}_{AA'}\mu^{B'} + \frac{2n}{2n+2}\tilde{\Upsilon}_{AC'}\delta_{A'}{}^{B'}\mu^{C'},  
 \label{full_change_2}
\end{align}
where $\Upsilon_{AA'} = \nabla_{AA'} \log \Omega$,  and $\tilde{\Upsilon}_{AA'} = \nabla_{AA'} \log \tilde{\Omega}$.  
\end{prop}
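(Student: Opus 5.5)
The plan is to derive the formulae by writing the general change of connection on $S$ and $S'$ and then imposing the two defining properties of Proposition~\ref{fundamental_theorem}: parallelism of the new scales, and total trace-freeness of the induced torsion. The uniqueness statement of that proposition then identifies the result with the Weyl connections of $\hat{\omega}$ and $\hat{\eta}$.

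\textbf{Step 1: reduce to a linear-algebra problem.} Any two connections on $S$ and $S'$ differ by tensors, so I would write
\begin{align}
\hat{\nabla}_{AA'}\mu^B=\nabla_{AA'}\mu^B+Q_{AA'C}{}^{B}\mu^C,\qquad
\hat{\nabla}_{AA'}\mu^{B'}=\nabla_{AA'}\mu^{B'}+R_{AA'C'}{}^{B'}\mu^{C'}.
\end{align}
The induced torsion then changes by the skew part of the induced difference tensor on $TX$:
\begin{align}
\hat T_{ABA'B'}{}^{CC'}-T_{ABA'B'}{}^{CC'}
=2\bigl(Q_{[A|A'|B]}{}^{C}\delta_{B'}{}^{C'}\bigr)_{\text{skew in }(AA'),(BB')}
+2\bigl(R_{[A|A'|B']}{}^{C'}\delta_{B}{}^{C}\bigr)_{\text{skew}}.
\end{align}

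\textbf{Step 2: impose parallelism of the new scales.} The condition $\hat{\nabla}\hat{\omega}=0$ fixes the trace $Q_{AA'C}{}^{C}=\Upsilon_{AA'}$. Similarly, $\hat{\nabla}\hat{\eta}=0$ fixes $R_{AA'C'}{}^{C'}=\tilde{\Upsilon}_{AA'}$, since $\eta$ is $\wedge^2$ of a rank-$2$ bundle.

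\textbf{Step 3: use a trace-type ansatz and force the torsion difference to be trace-free.} Each scale change should only produce terms built from the $1$-forms, so I take
\begin{align}
Q_{AA'C}{}^{B}=a\,X_{AA'}\delta_C{}^B+b\,X_{CA'}\delta_A{}^B,\qquad
R_{AA'C'}{}^{B'}=c\,X_{AA'}\delta_{C'}{}^{B'}+d\,X_{AC'}\delta_{A'}{}^{B'},
\end{align}
with $X$ ranging over $\Upsilon$ and $\tilde{\Upsilon}$ and separate coefficients for each. By uniqueness, if the ansatz works it gives the answer.

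The difference tensor on $TX$ is $Q\otimes\delta+\delta\otimes R$. The two ``$\delta\delta$'' pieces, namely $X_{AA'}\delta_C{}^B\delta_{C'}{}^{B'}$ with coefficient $(a+c)$, are symmetric under the skew of Step 1. The torsion is therefore proportional to $(a+c)X_{[AA'}\delta_{BB']}{}^{CC'}$, a pure trace, so trace-freeness requires $a+c=0$.

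The terms $b\,X_{BA'}\delta_A{}^C\delta_{B'}{}^{C'}$ and $d\,X_{AB'}\delta_{A'}{}^{C'}\delta_B{}^C$ are individually neither symmetric nor trace-free. The requirement that their combined skew part be torsion-free modulo trace gives the remaining relation. Exchanging $(AA')\leftrightarrow(BB')$ maps $X_{BA'}\delta_A^C\delta_{B'}^{C'}$ to $X_{AB'}\delta_B^C\delta_{A'}^{C'}$. So the skew part is $(b-d)$ times a fixed tensor, and one needs $b=d$.

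With the trace conditions, these relations give the linear system
\begin{align}
2n\,a+b=\text{(trace on }S),\qquad 2c+d=\text{(trace on }S'),\qquad a+c=0,\qquad b=d.
\end{align}
For $X=\Upsilon$ the right-hand sides are $1$ and $0$. This yields $a=\tfrac1{2n+2}$, $b=\tfrac{2}{2n+2}$, $c=-\tfrac1{2n+2}$, $d=\tfrac2{2n+2}$. For $X=\tilde{\Upsilon}$ the right-hand sides are $0$ and $1$, giving $a=-\tfrac1{2n+2}$, $b=d=\tfrac{2n}{2n+2}$, $c=\tfrac1{2n+2}$. These match \eqref{full_change_1} and \eqref{full_change_2}.

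\textbf{Main obstacle.} The hard part is the precise trace bookkeeping in Step 3: checking that the skew part of the $b$ and $d$ terms vanishes exactly when $b=d$, with no residual trace-free part. This needs the index exchange done carefully with the symmetrisation conventions on both index types.

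As a consistency check, setting $\tilde{\Omega}=\Omega$ should reduce the formulae to those of Proposition~\ref{restricted_weyl}: $a$ cancels, and $b=\tfrac{2+2n}{2n+2}=1$. This confirms the normalisation.
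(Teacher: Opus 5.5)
Your proposal is correct and is essentially the paper's argument. Your ansatz with $a+c=0$, $b=d$ is precisely the form $U_{(A|A'}\delta_{|B)}{}^{C}+V_{[A|A'}\delta_{|B]}{}^{C}$ (with its primed counterpart) that the paper imports from Bailey--Eastwood, where $aX=\tfrac12(U+V)$ and $bX=\tfrac12(U-V)$. The paper then, like you, fixes the $1$-forms by the trace conditions expressing parallelism of $\hat\omega$ and $\hat\eta$. The only differences are that the paper quotes the torsion-preserving form rather than checking it, and treats the $\Omega$ and $\tilde\Omega$ rescalings one at a time before composing them.

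One wording slip: the $(a+c)$ piece is not symmetric under the pair exchange. It skews to the pure-trace tensor $X_{AA'}\delta_C{}^B\delta_{C'}{}^{B'}-X_{CC'}\delta_A{}^B\delta_{A'}{}^{B'}$, as your next sentence in effect says. In any case, your uniqueness argument only needs sufficiency, namely $\hat T=T$ once $a+c=0$ and $b=d$.
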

\begin{proof}
From the proof of Theorem 2.4 in \cite{BE91},  the totally trace-free torsion is preserved if and only if the pair of connections (denoted $\hat{\nabla}_{AA'}$) differ from $\nabla_{AA'}$ by:
\begin{align}
\hat{\nabla}_{AA'}\mu^{C} &= \nabla_{AA'}\mu^{C}  + U_{(A|A'}\delta_{|B)}{}^{C}\mu^{B} + V_{[A|A'}\delta_{|B]}{}^{C}\mu^{B} \\
\hat{\nabla}_{AA'}\mu^{C'} &= \nabla_{AA'}\mu^{C'} - V_{A(A'}\delta_{B')}{}^{C'}\mu^{B'} - U_{A[A'}\delta_{B']}{}^{C'}\mu^{B'}, 
\end{align}
for $1$-forms $U_{AA'},  V_{AA'}$.  \par
Let us first compute the change of connections with $\tilde{\Omega} = 1$ and $\Omega$ arbitrary.  In order for the connection on $S'$ preserve $\eta_{A'B'}$,  it is easy to check that we require $U_{AA'} = -3V_{AA'}$.  Next,  let $\omega^{C_1...C_{2n}} \in \Gamma(\wedge^{2n}S)$ satisfy $\omega^{C_1...C_{2n}}\omega_{C_1...C_{2n}} = 2n!$.  Now compute
\begin{align}
&\hat{\nabla}_{AA'}(\Omega^{-1}\omega^{C_1...C_{2n}})\omega_{C_1...C_{2n}} \nonumber \\ =& \ 2n! \nabla_{AA'}\Omega^{-1} + 2n\Omega^{-1}( -3V_{(A|A'}\delta_{|B)}{}^{C_1} +V_{[A|A'}\delta_{|B]}{}^{C_1})\omega^{BC_2...C_{2n}}\omega_{C_1...C_{2n}}
\end{align}
From $\omega^{A...C_{2n}}\omega_{B...C_{2n}} = (2n-1)!\delta_{B}{}^{A}$ we see that $\hat{\omega}_{AB...C} = \Omega \omega_{AB...C}$ is preserved if and only if
\begin{align}
V_{AA'} = -\frac{1}{2n+2}\Upsilon_{AA'}.  
\end{align}
On the other hand we consider arbitrary $\tilde{\Omega}$ and  $\Omega = 1$.  In order to preserve the section $\omega_{C_1...C_{2n}}$ we require 
\begin{align}
( U_{(A|A'}\delta_{|B)}{}^{C_1} +V_{[A|A'}\delta_{|B]}{}^{C_1})\omega^{BC_2...C_{2n}}\omega_{C_1...C_{2n}}=0,
\end{align}
which yields
\begin{align}
V_{AA'} = -\frac{2n+1}{2n-1}U_{AA'}.  
\end{align}
We now insist that $\tilde{\Omega}\eta_{A'B'}$ is parallel.  Explicitly,  we require the vanishing of 
\begin{align}
&\hat{\nabla}_{AA'}(\tilde{\Omega}^{-1}\eta^{B'C'}) \\ = \ &(\nabla_{AA'}\tilde{\Omega}^{-1})\eta^{B'C'} - 2\tilde{\Omega}^{-1} \frac{2n+1}{2n-1}U_{A(A'}\delta_{D')}{}^{[B'}\eta^{C']D'} + 2\tilde{\Omega}^{-1}U_{A[A'}\delta_{D']}{}^{[B'}\eta^{C']D'},  \nonumber
\end{align}
and we solve 
\begin{align}
U_{AA'} = \frac{2n-1}{2n+2}\tilde{\Upsilon}_{AA'}.  
\end{align}
Performing these rescalings in either order yields the formulae (\ref{full_change_1}) and (\ref{full_change_2}).  
\end{proof} 
These clearly reduce to (\ref{change_of_scales1}) and (\ref{change_of_scales2}) if $\tilde{\Omega} = \Omega$.  

\section{Bianchi identities}\label{bianchi_appen}\index{torsion-free Grassmannian structure!Bianchi identities for}
\begin{proof}[Proof of Lemmata \ref{Vacuum Bianchi identity I} and \ref{Vacuum Bianchi identity II}]
The Bianchi identity for the induced affine connection is
\begin{align}\label{bianchi_abstract}
B_{AA'BB'CC'DD'}{}^{EE'} + B_{CC'AA'BB'DD'}{}^{EE'}  + B_{BB'CC'AA'DD'}{}^{EE'}  = 0,
\end{align}
where
\begin{align}
B_{AA'BB'CC'DD'}{}^{EE'} &=  \nabla_{AA'}\big(2\delta_{[B}{}^{E}\Phi_{C]DB'C'}\delta_{D'}{}^{E'} + \Lambda_{BC}\delta_{(B'}{}^{E'}\eta_{C')D'}\delta_{D}{}^{E} \\ &+ \eta_{B'C'}\big(\Psi_{BCD}{}^{E}-2\Lambda_{D(B}\delta_{C)}{}^{E}\big)\delta_{D'}{}^{E'} + \eta_{B'C'}\Phi_{BCD'}{}^{E'}\delta_{D}{}^{E}\big) \nonumber .   
\end{align}
Take (\ref{bianchi_abstract}),  trace on $D'$,  $E'$,  symmetrise on $A',B'$ and skew-symmetrise on $A,B$.  It yields 
\begin{align}\label{big_mess}
0 = \ &2\nabla_{CC'}\delta_{[A}{}^{E}\Phi_{B]D}{}^{A'B'} + 2\nabla_{[B|}{}^{(A'|}\delta_{C}{}^{E}\Phi_{|A]DC'}{}^{|B')} - 2\nabla_{[B}{}^{(A'|}\delta_{A]}{}^{E}\Phi_{CDC'}{}^{|B')} \\ &+2\nabla_{[B|}{}^{(A'}\delta_{C'}{}^{B')}\Psi_{|A]CD}{}^{E} + 2\nabla_{[B|}{}^{(A'}\delta_{C'}{}^{B')}\Lambda_{|A]D}\delta_{C}{}^{E} + 2\nabla_{[B|}{}^{(A'}\delta_{C'}{}^{B')}\Lambda_{CD}\delta_{|A]}{}^{E}. \nonumber
\end{align} 
Trace (\ref{big_mess}) over $C,  E$ and trace on $B',C'$:
\begin{align}
\nabla_{[A}{}^{A'}\Phi_{B]CA'B'} + 3\nabla_{[A|B'}\Lambda_{|B]C} &= 0.  \label{Bianchi1}
\end{align}
On the other hand,  trace (\ref{big_mess}) over $B,E$ and $B',C'$ to get (after clearing denominators and relabelling indices),
\begin{align} \label{Bianchi2}
\nabla_{DA'}\Psi_{ABC}{}^{D} + (1-2n)\nabla_{(A|A'}\Lambda_{|B)C} + (1-2n)\nabla_{(A|B'}\Phi_{|B)CA'}{}^{B'} = 0.  
\end{align}
Now if $\Phi_{ABA'B'} = 0$ it is easy to see (\ref{Bianchi1}) and (\ref{Bianchi2}) imply $\nabla_{AA'}\Lambda_{BC} = 0$ which is Lemma \ref{Vacuum Bianchi identity I}.   Substituting this,  together with the Einstein condition back into (\ref{big_mess}) immediately leads to Lemma \ref{Vacuum Bianchi identity II}.   
\end{proof}

\end{document}